\documentclass[11pt]{article}
\usepackage [dvips]{graphics}
\usepackage[centertags]{amsmath}
\usepackage{amsfonts}
\usepackage{amssymb}
\usepackage{amsthm}
\usepackage{amsmath,amscd}
\usepackage{stmaryrd}
\usepackage[]{titletoc}
\usepackage{lipsum}
\usepackage{soul}
\usepackage{mathrsfs}
\usepackage{xypic}
\usepackage{extarrows}
\usepackage[margin=1in]{geometry}
\usepackage{multirow}
\usepackage{array}
\usepackage{tocloft}


\theoremstyle{plain}
\newtheorem{thm}{Theorem}[section]
\newtheorem{defn}[thm]{Definition}
\newtheorem{cor}[thm]{Corollary}
\newtheorem{lem}[thm]{Lemma}
\newtheorem{prop}[thm]{Proposition}
\newtheorem{exam}[thm]{Example}
\newtheorem{rem}[thm]{Remark}




\setcounter{tocdepth}{1}


\newcommand{\CC}{\mathbb{C}}
\newcommand{\bn}{\mathbb{B}_n}
\newcommand{\cn}{\mathbb{C}^n}
\newcommand{\dd}{\mathbb{D}}
\newcommand{\tori}{\mathbb{T}}
\newcommand{\bert}{L_{a,t}^{2}(\mathbb{B}_n)}

\newcommand{\sn}{\mathbb{S}_n}
\newcommand{\intd}{\mathrm{d}}
\newcommand{\la}{\langle}
\newcommand{\ra}{\rangle}
\newcommand{\bpartial}{\bar{\partial}}
\newcommand{\bz}{\bar{z}}
\newcommand{\bw}{\bar{w}}

\newcommand{\BTt}{T^{(t)}}

\newcommand{\BHt}{H^{(t)}}
\newcommand{\BKt}{K^{(t)}}

\newcommand{\BFt}{\mathcal{F}^{(t)}}
\newcommand{\BGt}{\mathcal{G}^{(t)}}
\newcommand{\BPt}{P^{(t)}}
\newcommand{\Tr}{\mathrm{Tr}}

\newcommand{\ind}{\mathbb{N}_0^n}

\newcommand{\supp}{\mathrm{supp}}

\allowdisplaybreaks[4]

\makeatletter\@addtoreset{equation}{section} \makeatother
\setcounter{page}{1}

\title {Trace Formula of Semicommutators}
\author{Xiang Tang\thanks{Department of Mathematics and Statistics, Washington University, St. Louis, MO, U.S.A., 63130, xtang@math.wustl.edu.}, Yi Wang
	\thanks{Department of Mathematics, Chongqing University, Chongqing,  China, 400044,  wang\_yi@cqu.edu.cn}, and Dechao Zheng\thanks{Department of Mathematics, Vanderbilt University, Nashville, TN, U.S.A., 37240, dechao.zheng@vanderbilt.edu.} }
\date{}



\begin{document}
	\maketitle
	
	\begin{abstract}
		For weighted Bergman spaces on the unit disk, we give trace formulas of semicommutators of Toeplitz operators with $\mathscr{C}^2(\overline{\dd})$ symbols. We generalize this formula to weighted Bergman spaces on the unit ball in higher dimensions.  Applications and examples on the Hankel operators are also discussed.
		
		~
		
		\noindent{Keywords}: Toeplitz operator, Hankel operator, weighted Bergman space, semi-commutator
	\end{abstract}
	
	
	\section{Introduction}\label{sec: introduction}
	Commutators of Toeplitz operators have been objects of interest in the study of analytic function spaces for a long time. Various properties, such as compactness, Schatten class membership, trace formulas, were studied in a numerous amount of works (c.f. \cite{Al-Pe:trace, Boya91, Ch-Si:trace, Ch-Hu:traceII, Gu-Wa-Zh:trace, Mi-Pr-Si:trace, Voicu14, Yang03, Zhubookspaces, Zhu:bookoperator}). Among others, it is well-known (c.f. \cite{HH2, Zhu2001trace}) that for relatively nice symbols $f$ and $g$ on the unit disk $\dd$, the commutator $[T_f, T_g]=T_fT_g-T_gT_f$ on the Bergman space $L_a^2(\dd)$, is in the trace class, and 
	\begin{equation}\label{eq:HH}
	\Tr[T_f, T_g]=\frac{1}{2\pi i}\int_{\dd}\intd f\wedge\intd g.
	\end{equation}
	
This elegant formula is deeply connected to the Pincus function for a pair of noncommuting selfadjoint operators, c.f. \cite{Ca-Pi:exponential, Ca-Pi:mosaics, Pi:commutators}. 

Our study of trace of semi-commutator is inspired by our investigation \cite{Ta-Wa-Zh:HHtrace} of the Connes-Chern character for the Toeplitz extension (c.f. \cite{Co:noncommutative}). Semi-commutator of Toeplitz operators is the building block in Connes construction. On the other hand, the semi-commutator has its own importance. Let $\BHt_f$ be the Hankel operator with symbol $f$. The following equation
	\begin{equation}\label{eq:hankel}
		\BTt_f\BTt_g-\BTt_{fg}=-H^{(t)*}_{\bar{f}}\BHt_g,
	\end{equation}
provides a natural link between the semi-commutators of Toeplitz operators and Hankel operators, which allows to study the Hilbert-Schmidt norm of a Hankel operator by the trace of the associated semi-commutator. We aim in article to establish a generalization of the Helton-Howe trace formula (\ref{eq:HH}) to semi-commutators, which has not been explored in literature. 
	
Suppose $f$ and $g$ are two Lipschitz functions on $\dd$. It is well-known that for any $t>-1$, the semi-commutator $T_fT_g-T_{fg}$ is in the trace class (cf. \cite{Zhubookspaces}). We will establish a trace formula for $T_fT_g-T_{fg}$ when $f,g$ are nice function. More generally, we consider all weighted Bergman spaces $L_{a,t}^2(\dd)$, $t>-1$. To distinguish from the Bergman space, we add a superscript ``$(t)$'', i.e., $\BTt_f$ to denote the Toeplitz operator on $L_{a,t}^2(\dd)$ with symbol $f$. The explicit definitions are given in Section \ref{sec: preliminary}. We obtain the following trace formula. 
	
	\begin{thm}\label{thm: semi-commutator trace in dim 1}
		Suppose $t>-1$ and $f, g\in\mathscr{C}^2(\overline{\dd})$. Then
		\begin{equation}\label{eqn: semi-commutator trace in dim 1}
			\Tr\bigg(\BTt_f\BTt_g-\BTt_{fg}\bigg)=\frac{1}{2\pi i}\int_{\dd}\partial f\wedge\bpartial g+\int_{\dd^2}\varrho_t(|\varphi_z(w)|^2)\Delta f(z)\Delta g(w)\intd m(z, w).
		\end{equation}
		Here $\varrho_t$ is defined as below and is strictly positive on $(0,1)$.
		\[
		\varrho_t(s)=\frac{t+1}{16\pi^2}\int_s^1(1-x)^tx^{-1}F(s,x)\intd x,
		\]
		where
		\[
		F(s,x)=-\bigg[x\ln\frac{s}{x}+(1-x)\ln\frac{1-s}{1-x}\bigg].
		\]
		Moreover,
		\begin{equation}\label{eqn: semi-commutator trace in dim 1 limit}
			\lim_{t\to\infty}\Tr\bigg(\BTt_f\BTt_g-\BTt_{fg}\bigg)=\frac{1}{2\pi i}\int_{\dd}\partial f\wedge\bpartial g.
		\end{equation}
	\end{thm}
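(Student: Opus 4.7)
The strategy is to use identity (\ref{eq:hankel}) to express $\BTt_f\BTt_g - \BTt_{fg} = -H^{(t)*}_{\bar f}\BHt_g$, then to compute the trace of the Hankel composition via the reproducing-kernel structure combined with the Möbius covariance of $|\BKt|^2\intd A_t\intd A_t$.

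First I would combine the Hankel integral representation
\[
(\BHt_g h)(w) = \int_{\dd}\bigl(g(w)-g(\zeta)\bigr)\BKt(w,\zeta) h(\zeta)\intd A_t(\zeta), \quad h\in L_{a,t}^2(\dd),
\]
which follows from $(\BHt_g h)(w)=g(w)h(w)-\BPt(gh)(w)$ and the reproducing formula for $h$, with the trace identity $\Tr(T) = \int_{\dd}(T\BKt_z)(z)\intd A_t(z)$ and the adjoint action $H^{(t)*}_{\bar f}u = \BPt(fu)$ for $u\in L_{a,t}^2(\dd)^{\perp}$. Applying the reproducing property once more and treating the iterated integral carefully (the integrand is not in general absolutely integrable), this reduces $\Tr\bigl(H^{(t)*}_{\bar f}\BHt_g\bigr)$ to an integral with formal integrand $f(\zeta)\bigl(g(\zeta)-g(z)\bigr)|\BKt(z,\zeta)|^2\intd A_t(\zeta)\intd A_t(z)$.

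Second, I would exploit the Möbius covariance
\[
|\BKt(w,z)|^2\,\intd A_t(w)\intd A_t(z) = (t+1)^2\bigl(1-|\varphi_z(w)|^2\bigr)^{t+2}\,\intd\lambda(w)\intd\lambda(z),
\]
where $\intd\lambda$ is the hyperbolic area. The change of variables $\zeta\mapsto\varphi_z(\zeta)$ for each fixed $z$ recenters the weight $(1-|\zeta|^2)^{t+2}$ at $\zeta=0$. I would then Taylor-expand $f\circ\varphi_z$ and $g\circ\varphi_z$ around $\zeta=0$ using the Wirtinger chain rule and $\varphi_z'(0)=|z|^2-1$, and average in the angular variable. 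The first-order Taylor contributions combine via a $t$-independent Beta-function computation into the Helton--Howe term $\frac{1}{2\pi i}\int_{\dd}\partial f\wedge\bpartial g$; the asymmetry in $f$ and $g$ reflects that of the semi-commutator itself.

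Third, I would handle the second-order Taylor remainders, which contribute the correction. Writing each remainder via the Taylor integral formula (which introduces $\Delta f$ and $\Delta g$ as Laplacians at intermediate points), then carrying out the radial integration against $(1-|\zeta|^2)^{t+2}$ and reparametrizing by $s=|\varphi_z(w)|^2$, I expect to identify the kernel $\varrho_t(s)$ in the stated closed form; the manifest positivity of $F(s,x)$ on $\{0<s<x<1\}$ then yields strict positivity of $\varrho_t$ on $(0,1)$. For the limit (\ref{eqn: semi-commutator trace in dim 1 limit}), I would observe that the factor $(1-x)^t$ concentrates at $x=0$ as $t\to\infty$ while $F(s,0)=0$, so $\varrho_t(s)\to 0$ pointwise and dominated convergence gives vanishing of the correction. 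The main obstacle is the Taylor bookkeeping in the previous step: separating the $t$-independent Helton--Howe term from the $t$-dependent $\varrho_t$-correction in a way that reveals both the explicit form of $\varrho_t$ and its positivity, while controlling intermediate expressions that are not absolutely convergent.
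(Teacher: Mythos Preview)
Your overall strategy differs from the paper's: the paper does not Taylor-expand, but instead iterates an integration-by-parts identity (Lemma~\ref{lem: formula D z}, derived from the Cauchy formula transported by $\varphi_z$) \emph{four} times. The first two applications turn $(f(z)-f(w))g(w)$ into $\partial f(z)\,\bpartial g(w)$ times an explicit radial kernel $\psi_1(|\varphi_z(w)|^2)$; only then is the $\xi$-integral collapsed via Fubini (at that point the triple integral is absolutely convergent). Two further applications, now with $z$ and $w$ reversed, produce the boundary term $\frac{1}{2\pi i}\int\partial f\wedge\bpartial g$ and convert the remaining integrand into $\Delta f(z)\,\Delta g(w)$ times a new radial kernel. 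The operations $\mathcal F^{(t)},\mathcal G^{(t)}$ track exactly how the radial kernel evolves, and unwinding them yields the closed form for $\varrho_t$.

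Your Step~3 is where the proposal has a genuine gap. A Taylor remainder gives $\Delta f$ and $\Delta g$ evaluated at \emph{intermediate} points on segments, not at the endpoints $z$ and $w$; there is no mechanism in your outline that converts such a remainder into an expression of the form $\Delta f(z)\,\Delta g(w)$ with a purely radial kernel $\varrho_t(|\varphi_z(w)|^2)$. The paper achieves this precisely because integration by parts moves derivatives onto the symbol without introducing intermediate points, and each application leaves behind a factor depending only on $|\varphi_z(w)|^2$. Without an analogous device, it is not clear how you would even \emph{discover} the formula for $\varrho_t$, let alone prove it.

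Two smaller points. First, your handling of the Fubini issue is where the paper's ordering matters: integrating out $\xi$ before any integration by parts yields the divergent double integral you flag, whereas the paper does two rounds of integration by parts first, obtains an absolutely convergent triple integral, and only then applies Fubini. Second, your argument for (\ref{eqn: semi-commutator trace in dim 1 limit}) contains an error: $F(s,0)=-\ln(1-s)\neq 0$. The correct reason $\varrho_t(s)\to 0$ is that the integration in $x$ runs over $[s,1]$, so for fixed $s>0$ the total mass $(t+1)\int_s^1(1-x)^t\,\intd x=(1-s)^{t+1}\to 0$; but turning this pointwise convergence into vanishing of the double integral over $\dd^2$ still requires a uniform-in-$t$ bound on $\varrho_t$ near $s=1$, which the paper supplies via Lemma~\ref{lem: F(s,x) integral estimate}.
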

	Let $\BHt_f$ be the Hankel operator with symbol $f$. By Equation (\ref{eq:hankel}),	we apply the trace formula (\ref{eqn: semi-commutator trace in dim 1})  to study the Hilbert-Schmidt norm of Hankel operators with $\mathscr{C}^2(\overline{\dd})$ symbols (See Corollary \ref{cor: hankel HS norm in dim 1} and Corollary \ref{cor: Hankel HS norm inequality in dim 1}).

	Next we generalize Theorem \ref{thm: semi-commutator trace in dim 1} to higher dimensions. In general, for $n\geq2$, semicommutators of Toeplitz operators on $\bert$ with Lipschitz symbols only belong 
	to $\mathcal{S}^p$ for $p>n$. In fact, it was shown in \cite{Zhu:HShankel} that in the case when $f=\bar{g}$ and $g$ is anti-holomorphic, their semi-commutator is in the trace class only when $g=0$. To make $\BTt_f\BTt_g-\BTt_{fg}$ belong to trace class, one generally requires some further assumptions. We do not aim to give a criterion of when semi-commutators belong to the trace class. Instead, we focus on giving a trace formula for relatively nice symbols. 
	
	Recall that the Levi form $L_zf$ of a function $f$ at a point $z$ is the two form
	\[
	L_zf(\xi)=\sum_{i,j=1}^n\partial_i\bpartial_jf(z)\xi_i\bar{\xi}_j,\quad\forall\xi\in\cn.
	\]
	Define $\partial_zf$ and $\bpartial_z f$ as the $n$-vectors that has $\partial_if(z)$  and $\bpartial_i f(z)$ in its $i$-th entry. Then
	\[
	\la\partial_zf,\overline{z-w}\ra=\sum_{i=1}^n\partial_i f(z)(z_i-w_i),\quad\la\bpartial_wf,z-w\ra=\sum_{j=1}^n\bpartial_j f(w)\overline{(z_j-w_j)}.
	\]
	We say that $f, g$ satisfy
	{\bf Condition 1} if $f, g\in\mathscr{C}^1(\overline{\bn})$ and there exist $C>0$, $\epsilon>0$, such that
	\begin{equation}
		\big|\la\partial_zf,\overline{z-w}\ra\la\bpartial_wg,z-w\ra\big|\leq C|\varphi_z(w)|^2|1-\la z, w\ra|^{n+\epsilon},\quad\forall z, w\in\bn.
	\end{equation}
	We say that $f, g$ satisfy
	{\bf Condition 2} if $f, g$ satisfy condition 1, and $f, g\in\mathscr{C}^2(\overline{\bn})$ satisfy the following inequalities. $\forall z, w\in \mathbb{B}_n$
	\begin{equation}
		\big|\la\partial_zf,\overline{z-w}\ra L_wg(z-w)\big|\leq C|\varphi_z(w)|^3|1-\la z,w\ra|^{n+\epsilon},
	\end{equation}
	\begin{equation}
		\big|L_zf(z-w)\la\bpartial_wg,z-w\ra\big|\leq C|\varphi_z(w)|^3|1-\la z,w\ra|^{n+\epsilon},
	\end{equation}
	\begin{equation}
		\big|L_zf(z-w)L_wg(z-w)\big|\leq C|\varphi_z(w)|^4|1-\la z,w\ra|^{n+\epsilon}.
	\end{equation}
Here $|\varphi_z(w)|$ is the length of the M\"{o}bius transform $\varphi_z(w)$, or the pseudo-hyperbolic distance of $z$ and $w$. We obtain the following generalization of Theorem \ref{thm: semi-commutator trace in dim 1}.
	\begin{thm}\label{thm: higher dimensions}
		If $t>2n-3$, and $f, g$ satisfy Condition 1,
		then the semicommutator $\BTt_f\BTt_g-\BTt_{fg}$ is in the trace class. If furthermore $f, g$ satisfy Condition 2,
		then
		\begin{flalign}\label{eqn: trace formula high dim}
			\Tr\bigg(\BTt_f\BTt_g-\BTt_{fg}\bigg)=&a_{n,t}\int_{\bn}\partial f\wedge\bpartial g\wedge\bigg[\partial\bpartial\log(1-|w|^2)\bigg]^{n-1}\\
			&+\int_{\bn\times\bn}\rho_{n,t}(|\varphi_z(w)|^2)L_zf(z-w)L_wg(z-w)\frac{\intd m(z,w)}{|1-\la z,w\ra|^{2n+2}}.\nonumber
		\end{flalign}
		Here
		\[
		a_{n,t}=\frac{-\int_0^1(1-s)^{n-1}s^t\ln s\intd s}{\big(B(n,t+1)^2\big)n(2\pi i)^n},
		\]
		and
		\begin{equation}\label{eqn: rhont}
			\rho_{n,t}(s)=s^{-n-1}\sum_{k=1}^n\frac{(n-1)!\Gamma^2(n+t+1)}{(n-k)!\Gamma(t+1+k)\Gamma(t+1)\pi^{2n}}\int_s^1F(s,x)x^{n-k-1}(1-x)^{t+k-1}\intd x.
		\end{equation}
		In particular, $\rho_{n,t}$ is strictly positive on $(0,1)$, and
		\begin{equation}\label{eqn: asymp trace formula high dim}
			\lim_{t\to\infty}t^{1-n}\Tr\bigg(\BTt_f\BTt_g-\BTt_{fg}\bigg)=\frac{1}{(n-1)!(2\pi i)^n}\int_{\bn}\partial f\wedge\bpartial g\wedge\bigg[\partial\bpartial\log(1-|w|^2)\bigg]^{n-1}.
		\end{equation}
	\end{thm}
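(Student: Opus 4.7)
The plan is to first reduce the trace to an explicit double integral via the Hankel identification \eqref{eq:hankel}, and then to extract the two asserted terms of \eqref{eqn: trace formula high dim}. Combining $\BTt_f\BTt_g-\BTt_{fg}=-(\BHt_{\bar f})^*\BHt_g$ with the integral representation $\BHt_g h(z)=\int_{\bn}(g(z)-g(w))K^{(t)}(z,w)\,h(w)\intd v_t(w)$, valid on $h\in\bert$ by the reproducing property, I would read off the kernel of the composition on $\bert$ and integrate along the diagonal to obtain
\[
\Tr\!\left(\BTt_f\BTt_g-\BTt_{fg}\right)
= -\int_{\bn\times\bn}(f(z)-f(w))(g(z)-g(w))\,|K^{(t)}(z,w)|^2\,\intd v_t(z)\intd v_t(w).
\]
Using $(1-|z|^2)^t(1-|w|^2)^t=(1-|\varphi_z(w)|^2)^t|1-\la z,w\ra|^{2t}$, the weight simplifies to $(1-|\varphi_z(w)|^2)^t|1-\la z,w\ra|^{-2(n+1)}$ against $\intd m(z)\intd m(w)$, which is the form best suited for the subsequent analysis.

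For trace-class membership, I would write $f(z)-f(w)=\la\partial_z f,\overline{z-w}\ra+\la\bpartial_z f,z-w\ra+R_f(z,w)$ with $R_f=O(|z-w|^2)$ by Taylor's theorem, and similarly for $g$. The bound in Condition 1 controls the product of first-order terms, and a Forelli--Rudin estimate for $\int_{\bn}(1-|w|^2)^t|1-\la z,w\ra|^{-c}\intd m(w)$ supplies the decay needed to make the double integral absolutely convergent for $t>2n-3$; this yields the trace-class property and legitimates all subsequent manipulations.

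Under Condition 2 I would upgrade to a second-order analysis. Changing variables $w=\varphi_z(u)$ makes the weight $(1-|u|^2)^t(1-|z|^2)^{-(n+1)}\intd m(u)\intd m(z)$ invariant under $U(n)$ acting on $u$, and the Taylor expansion of $f(\varphi_z(u))$ in powers of $u,\bar u$ then decomposes according to the grading by $u\mapsto e^{i\theta}u$: homogeneous monomials of nonzero weight integrate to zero. Of the weight-$0$ survivors, one class (encompassing both the linear-linear mixed products and, crucially, the pure holomorphic $\times$ pure antiholomorphic Hessian cross-products paired by $U(n)$-invariance) consolidates, via an integration by parts using the K\"ahler form $\partial\bpartial\log(1-|w|^2)$ of the Bergman metric, into the first term $a_{n,t}\int_{\bn}\partial f\wedge\bpartial g\wedge[\partial\bpartial\log(1-|w|^2)]^{n-1}$. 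The remaining weight-$0$ Levi-Levi piece yields the second term, with $\rho_{n,t}$ and the function $F(s,x)$ of \eqref{eqn: rhont} emerging from an inner $x$-integration of the associated Beta-type integral. I would recognize $F(s,x)$ as the Kullback--Leibler divergence between the two-point distributions $(x,1-x)$ and $(s,1-s)$, so $F(s,x)\geq 0$ by Jensen's inequality, and hence $\rho_{n,t}>0$ on $(0,1)$. The coefficient $a_{n,t}$ comes out of a single Beta/Gamma integral carrying a $-\ln s$ factor (from a $\Gamma$-derivative), and a check at $n=1$ (where $a_{1,t}=1/(2\pi i)$ and $\rho_{1,t}(s)s^{2}/16$ agrees with $\varrho_t$) reproduces Theorem~\ref{thm: semi-commutator trace in dim 1}.

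For the asymptotic \eqref{eqn: asymp trace formula high dim}, Stirling's formula gives $B(n,t+1)^{-2}\sim t^{2n-2}/\Gamma(n)^2$, whence $t^{1-n}a_{n,t}\to 1/[(n-1)!(2\pi i)^n]$; meanwhile, $(1-x)^{t+k-1}$ concentrates near $x=1$ as $t\to\infty$ while $F(s,x)$ vanishes to second order at $x=s$, so a direct estimate shows that the Levi-Levi integral grows strictly slower than $t^{n-1}$ and disappears upon multiplication by $t^{1-n}$. The main obstacle throughout is the explicit identification of both $a_{n,t}$ and the closed form \eqref{eqn: rhont} of $\rho_{n,t}$: this requires careful bookkeeping of the Beta/Gamma integrals generated by the Bergman-kernel reductions, and in particular tracking the logarithmic term that produces $F(s,x)$. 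A parallel technical subtlety is verifying that the seemingly disparate weight-$0$ Taylor pieces really do combine into the clean $(n,n)$-form, which needs an integration by parts using the Bergman K\"ahler form together with careful boundary control via Condition 2.
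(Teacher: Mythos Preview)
Your opening identity is wrong, and everything downstream inherits the error. The displayed formula
\[
\Tr\!\left(\BTt_f\BTt_g-\BTt_{fg}\right)
= -\int_{\bn\times\bn}(f(z)-f(w))(g(z)-g(w))\,|K^{(t)}(z,w)|^2\,\intd v_t(z)\intd v_t(w)
\]
is \emph{symmetric} in $f$ and $g$, whereas $\Tr(\BTt_f\BTt_g-\BTt_{fg})-\Tr(\BTt_g\BTt_f-\BTt_{fg})=\Tr[\BTt_f,\BTt_g]$ is the Helton--Howe term, which is nonzero for generic symbols (already for $f=z_1$, $g=\bar z_1$ on the disk). The mistake is in ``reading off the kernel and integrating along the diagonal'': the representation $\BHt_g h(z)=\int(g(z)-g(w))\BKt(z,w)h(w)\intd\lambda_t(w)$ is valid only for $h\in\bert$, so $(g(z)-g(w))\BKt(z,w)$ is \emph{not} the $L^2\!\times\!L^2$ integral kernel of $\BHt_g$. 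If you actually compute $\Tr(H^{(t)*}_{\bar f}H^{(t)}_g)$ via an orthonormal basis of $\bert$ and use $\sum_n e_n(w)\overline{e_n(w')}=\BKt(w,w')$, you obtain a \emph{triple} integral
\[
\int_{\bn^3}\big(f(z)-f(w')\big)\big(g(z)-g(w)\big)\,\BKt(z,w)\BKt(w',z)\BKt(w,w')\,\intd\lambda_t(z)\intd\lambda_t(w)\intd\lambda_t(w'),
\]
which is not symmetric and cannot be collapsed to your double integral.

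The paper avoids this entirely. It writes $\BTt_f\BTt_g-\BTt_{fg}=\BPt R$ where $R$ is obtained by one round of integration by parts (Lemma~\ref{lem: formula Bn}, Lemma~\ref{lem: formula F induction}) turning $f(z)-f(w)$ into $\la\partial_z f,\overline{z-w}\ra$ and then $\la\bpartial_w g,z-w\ra$; this is where the asymmetry between $\partial f$ and $\bpartial g$ enters. Only \emph{after} this step does one integrate out the Berezin variable $\xi$ and land on a genuine double integral, now carrying the weight $\Phi^{(t)}_{n,1}(|\varphi_z(w)|^2)$. A second application of the same integration-by-parts lemma (with $z,w$ reversed) then produces the two terms of \eqref{eqn: trace formula high dim}, with $a_{n,t}$ and $\rho_{n,t}$ arising as $\BFt_{n+1}\Phi^{(t)}_{n,1}(0)$ and $(1-s)^t\Phi^{(t)}_{n,2}(s)$ up to explicit constants. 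Your proposed mechanism---Taylor expand, change variables $w=\varphi_z(u)$, and kill monomials of nonzero $U(1)$-weight---cannot by itself generate the correct first term, because once you start from a symmetric expression you can never recover $\partial f\wedge\bpartial g$ (an intrinsically asymmetric pairing). The ``consolidation via integration by parts using the K\"ahler form'' you allude to is exactly the step the paper carries out rigorously with Lemma~\ref{lem: formula Bn}; it does not emerge from symmetry considerations alone.
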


\begin{rem}
One can show that if the functions $f$ and $g$ are in $\mathscr{C}^2(\overline{\dd})$ then they satisfy Condition 1 and 2 with $n=1$. Therefore Theorem \ref{thm: higher dimensions} actually implies Theorem \ref{thm: semi-commutator trace in dim 1}. Because trace formulas at dimension $1$ are of independent interest, and because the proof gets significantly more complicated at higher dimensions, we first give a complete proof of Theorem \ref{thm: semi-commutator trace in dim 1} in Section \ref{sec: trace formulas on the disk}.
\end{rem}

	\begin{rem}
		In Lemma \ref{lem: d f g= f g log = f g R} we give an alternative expression of the first term in the right hand side of \eqref{eqn: trace formula high dim}, in terms of radial derivatives $R=\sum_{i=1}^nz_i\partial_i$. So \eqref{eqn: trace formula high dim} and \eqref{eqn: asymp trace formula high dim} can be rewritten as
		\begin{flalign*}
			\Tr\bigg(\BTt_f\BTt_g-\BTt_{fg}\bigg)=&-(2i)^n(n-1)!a_{n,t}\int_{\bn}\frac{\sum_{i=1}^n\partial_if(w)\bpartial_ig(w)-Rf(w)\bar{R}g(w)}{(1-|w|^2)^n}\intd m(w)\\
			&+\int_{\bn\times\bn}\rho_{n,t}(|\varphi_z(w)|^2)L_zf(z-w)L_wg(z-w)\frac{\intd m(z,w)}{|1-\la z,w\ra|^{2n+2}},
		\end{flalign*}
		and 
		\begin{equation*}
			\lim_{t\to\infty}t^{1-n}\Tr\bigg(\BTt_f\BTt_g-\BTt_{fg}\bigg)=\frac{-1}{\pi^n}\int_{\bn}\frac{\sum_{i=1}^n\partial_if(w)\bpartial_ig(w)-Rf(w)\bar{R}g(w)}{(1-|w|^2)^n}\intd m(w).
		\end{equation*}
	\end{rem}

\begin{rem}
It is clear that if we have $\BHt_{\bar{f}}\in\mathcal{S}^p$ and $\BHt_g\in\mathcal{S}^q$ for some $\frac{1}{p}+\frac{1}{q}=1$ then the semi-commutator $\BTt_f\BTt_g-\BTt_{fg}$ belongs to the trace class. However, the converse is not true. 
The point of Condition 1 and Condition 2 is to give combined conditions of $f$ and $g$, instead of separate conditions. The estimates 
\begin{equation*}
	|1-\la z,w\ra|\approx(1-|z|^2)+(1-|w|^2)+|z-w|^2+|\mathrm{Im}\la z,w\ra|,\quad|\varphi_z(w)|^2=\frac{|z-P_z(w)|^2+(1-|z|^2)|Q_z(w)|^2}{|1-\la z,w\ra|^2}
\end{equation*}
give us some insight into the two conditions. In Lemmas \ref{lem: gradient product} - \ref{lem: f g compact support} we give some special cases of Theorem \ref{thm: higher dimensions} that are more intuitive and more convenient to work with. 
\end{rem}
	
	Our proofs involve applying integration by parts formulas on the unit disk and unit ball (see Lemma \ref{lem: formula D z} and Lemma \ref{lem: formula Bn}). These formulas essentially come from the Cauchy formula and a Bochner-Martinelli type formula we develop in Appendix I.
	 In Section \ref{sec: integration by parts}, integration by parts formulas on $\bn$ are developed. The formulas involve auxiliary functions and operations, which we define and study in Appendix II. In Section \ref{sec: the higher dimensions}, we prove Theorem \ref{thm: higher dimensions}. Some applications and examples are given in Section \ref{sec: applications and examples}.

	We end the introduction with some explanation on the relationships between this paper and our other paper \cite{Ta-Wa-Zh:HHtrace}. Our study was motivated by the exploration of the Helton-Howe trace and Connes-Chern character in \cite{Ta-Wa-Zh:HHtrace}, which is an important invariant in noncommutative differential geometry. In \cite{Ta-Wa-Zh:HHtrace}, we study the Helton-Howe trace and the Connes-Chern character for Toeplitz operators on weighted Bergman spaces via the idea of quantization, \cite{Be:quantization, Bo-Gu:spectral, Bo-Le-Ta-We:asymptotic, Co:deformation, En:asymptotics, En:berezin, En:forelli-rudin, En:weighted}. As a remainder term in the Toeplitz quantization, semi-commutators naturally appears in the proofs. On the other hand, many of the tools developed here are also heavily used in \cite{Ta-Wa-Zh:HHtrace}. The proofs in this paper are intended to be self-contained.\\

\noindent{\bf Acknowledgment:} We would like to thank Mohammad Jabbari, Richard Rochberg, Jingbo Xia and Kai Wang for inspiring discussions. Tang is partially supported by NSF Grants DMS 1800666, 1952551.

	\section{Preliminaries}\label{sec: preliminary}
	In this section, we recall some basic definitions and properties about weighted Bergman spaces and Schatten-$p$ class operators.
	
	Let $\bn$ be the open unit ball of $\cn$ and $\sn=\partial\bn$ the unit sphere. Let $m$ be the Lebesgue measure on $\bn$ and $\sigma$ be the surface measure on $\sn$. Denote $\sigma_{2n-1}=\sigma(\sn)=\frac{2\pi^n}{(n-1)!}$.

For $t>-1$, define the probability measure on $\bn$:
	\[
	\intd\lambda_t(z)=\frac{(n-1)!}{\pi^nB(n, t+1)}(1-|z|^2)^t\intd m(z).
	\]
	Here $B(n, t+1)$ is the Beta function. The weighted Bergman space $\bert$ is the subspace of $L^2(\bn, \lambda_t)$ consisting of holomorphic functions on $\bn$. The reproducing kernel of $\bert$ is
	\[
	\BKt_w(z)=\frac{1}{(1-\la z,w\ra)^{n+1+t}},\quad\forall w\in\bn.
	\]
	For any $f\in L^\infty(\bn)$, the Toeplitz operator $\BTt_f$ is the compression
	\[
	\BTt_f=\BPt M^{(t)}_f|_{\bert},
	\]
	where $\BPt$ is the orthogonal projection from $L^2(\bn, \lambda_t)$ onto $\bert$, and $M^{(t)}_f$ is the multiplication operator on $L^2(\bn,\lambda_t)$. The Hankel operator with symbol $f$ is
	\[
	H^{(t)}_f=(I-P^{(t)})M_f^{(t)}P^{(t)}.
	\]
	Using the reproducing kernels, we can write $\BTt_f, H^{(t)}_f$ as integral operators. For $h\in\bert$, we have the following expressions,
	\[
	\BTt_fh(z)=\int_{\bn}f(w)h(w)\BKt_w(z)\intd\lambda_t(w),\quad\forall z\in\bn.
	\]
	\[
	H^{(t)}_fh(z)=\int_{\bn}\big(f(z)-f(w)\big)h(w)\BKt_w(z)\intd\lambda_t(w),\quad\forall z\in\bn.
	\]

	An important tool on $\bn$ is the M\"{o}bius transform.
	\begin{defn}\label{defn: Mobius transform}
		For $z\in\bn$, $z\neq0$, the M\"{o}bius transform $\varphi_z$ is the biholomorphic mapping on $\bn$ defined as follows.
		\[
		\varphi_z(w)=\frac{z-P_z(w)-(1-|z|^2)^{1/2}Q_z(w)}{1-\la w,z\ra},\quad\forall w\in\overline{\bn}.
		\]
		Here $P_z$ and $Q_z$ denote the orthogonal projection from $\cn$ onto $\mathbb{C}z$ and $z^\perp$, respectively.
		Define
		\[
		\varphi_0(w)=-w,\quad\forall w\in\overline{\bn}.
		\]
		It is well-known that $\varphi_z$ is an automorphism of $\bn$ satisfying $\varphi_z\circ\varphi_z=Id$. Also, the two variable function $\rho(z,w):=|\varphi_z(w)|=|\varphi_w(z)|$ defines a metric on $\bn$. Moreover, $\beta(z,w):=\tanh^{-1}\rho(z,w)$ coincides with the Bergman metric on $\bn$.
	\end{defn}

	We list some lemmas that serve as basic tools for our study of Toeplitz operators on $\bn$. Most of the following of this section can be found in \cite{Rudinbookunitball, Zhubookspaces}. A proof will be provided when necessary.
	
	For non-negative values $A, B$, by $A\lesssim B$ we mean that there is a constant $C$ such that $A\leq CB$. Sometimes, to emphasize that the constant $C$ depends on some parameter $a$, we write $A\lesssim_a B$. The notations $\gtrsim, \gtrsim_a, \approx, \approx_a$ are defined similarly.

	\begin{lem}\label{lem: Mobius basics}
		Suppose $z, w, \zeta\in\bn$.
		\begin{itemize}
			\item[(1)] $\frac{1}{1-\la\varphi_{\zeta}(z),\varphi_{\zeta}(w)\ra}=\frac{(1-\la z,\zeta\ra)(1-\la\zeta,w\ra)}{(1-|\zeta|^2)(1-\la z,w\ra)}$.
			\item[(2)] $1-|\varphi_z(w)|^2=\frac{(1-|z|^2)(1-|w|^2)}{|1-\la z,w\ra|^2}$.
			\item[(3)] For any $R>0$ there exists $C>1$ such that whenever $\beta(z,w)<R$,
			\[
			\frac{1}{C}\leq\frac{1-|z|^2}{1-|w|^2}\leq C,\quad\frac{1}{C}\leq\frac{|1-\la z,\zeta\ra|}{|1-\la w,\zeta\ra|}\leq C.
			\]
			\item[(4)] The real Jacobian of $\varphi_z$ is $\frac{(1-|z|^2)^{n+1}}{|1-\la z,\cdot\ra|^{2n+2}}$ on $\bn$ and $\frac{(1-|z|^2)^n}{|1-\la z,\cdot\ra|^{2n}}$ on $\sn$.
			\item[(5)] For $z\in\bn$,
			\[
			z-\varphi_z(w)=\frac{(1-|z|^2)P_z(w)+(1-|z|^2)^{1/2}Q_z(w)}{1-\la w,z\ra}:=\frac{A_zw}{1-\la w,z\ra},
			\]
			where $A_z=[a_z^{ij}]$ is an $n\times n$ matrix depending on $z$, and $w$ is viewed as a column vector.
			\item[(6)] There exists $C>0$ such that for any $z\in\bn$, $z\neq0$,
			\begin{equation}\label{eqn: |z-w|}
				|z-P_z(w)|\leq |\varphi_z(w)||1-\la z,w\ra|,\quad |Q_z(w)|\leq C|\varphi_z(w)||1-\la z,w\ra|^{1/2},
			\end{equation}
			and
			\begin{equation}\label{eqn: tangent nontangent estimates}
				|z-w|\leq C|\varphi_z(w)||1-\la z,w\ra|^{1/2}.
			\end{equation}
			In contrast, if $n=1$, then $|z-w|=|\varphi_z(w)||1-z\bw|$.
			\item[(7)] $1-|z|^2\leq2|1-\la z,w\ra|$.
		\end{itemize}
	\end{lem}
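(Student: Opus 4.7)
The plan is to treat items (1)--(7) in a roughly logical order, most being direct computations once the right identities are in place. First I would verify (1) by substituting the definition of $\varphi_\zeta$ into $\la\varphi_\zeta(z),\varphi_\zeta(w)\ra$, using orthogonality of $P_\zeta$ and $Q_\zeta$, and simplifying. Item (2) follows from (1) by setting $z=w$: the left side becomes $(1-|\varphi_\zeta(z)|^2)^{-1}$ and the right side $|1-\la z,\zeta\ra|^2/\bigl((1-|\zeta|^2)(1-|z|^2)\bigr)$; renaming the variables gives the stated identity. Item (7) is immediate from $|1-\la z,w\ra|\geq 1-|z||w|\geq 1-|z|\geq (1-|z|^2)/2$.

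For (5), I would substitute the definition of $\varphi_z(w)$ into $z-\varphi_z(w)$ and use $\la w,z\ra z=|z|^2P_z(w)$ (which holds because $P_z(w)=\la w,z\ra z/|z|^2$) to rewrite the numerator as $(1-|z|^2)P_z(w)+(1-|z|^2)^{1/2}Q_z(w)$. The Jacobian formulas of (4) then follow by direct differentiation from (5) (a standard computation, cf.\ Rudin). For (3), from (2) we have $|1-\la z,w\ra|^2(1-|\varphi_z(w)|^2)=(1-|z|^2)(1-|w|^2)$; the hypothesis $\beta(z,w)<R$ forces $|\varphi_z(w)|\leq\tanh R$, so combined with (7) this bounds $(1-|z|^2)/(1-|w|^2)$ both above and below by constants depending only on $R$. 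A similar argument using M\"obius invariance of $\beta$ and (1) handles the ratio $|1-\la z,\zeta\ra|/|1-\la w,\zeta\ra|$.

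The only nontrivial work is in (6). The strategy is to apply (5) to the involution identity $\varphi_z(\varphi_z(w))=w$: substituting $w\mapsto\varphi_z(w)$ in (5) yields
\[
z-w=\frac{(1-|z|^2)P_z\varphi_z(w)+(1-|z|^2)^{1/2}Q_z\varphi_z(w)}{1-\la\varphi_z(w),z\ra}.
\]
Specializing (1) at $w=0$ (with $\varphi_\zeta(0)=\zeta$) extracts the key identity $|1-\la\varphi_z(w),z\ra|=(1-|z|^2)/|1-\la z,w\ra|$. Taking the $P_z$ and $Q_z$ components of the displayed equation yields
\[
|z-P_z(w)|=|P_z\varphi_z(w)|\,|1-\la z,w\ra|,\qquad |Q_z(w)|=\frac{|Q_z\varphi_z(w)|\,|1-\la z,w\ra|}{(1-|z|^2)^{1/2}}.
\]
The first inequality of (6) is now immediate since $|P_z\varphi_z(w)|\leq|\varphi_z(w)|$. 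For the second, I would establish the auxiliary bound $|Q_z(u)|^2\leq 2|u|^2|1-\la u,z\ra|$ valid for all $u,z\in\overline{\bn}$, by writing $|Q_z(u)|^2=|u|^2(1-|\xi|^2)$ with $\xi=\la u,z\ra/(|u||z|)$ and reducing to the elementary inequality $(1-\rho^2)/|1-r\rho e^{i\phi}|\leq 2$ for $\rho,r\in[0,1]$ and $\phi\in\mathbb{R}$. Applied with $u=\varphi_z(w)$ this produces $|Q_z(w)|\leq\sqrt{2}\,|\varphi_z(w)|\,|1-\la z,w\ra|^{1/2}$. The third inequality of (6) then follows from $|z-w|^2=|z-P_z(w)|^2+|Q_z(w)|^2$ together with $|1-\la z,w\ra|\leq 2$. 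In dimension one $Q_z\equiv 0$, so the first inequality degenerates to the equality $|z-w|=|\varphi_z(w)||1-\la z,w\ra|$.

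I expect the main obstacle to be the auxiliary inequality $|Q_z(u)|^2\leq 2|u|^2|1-\la u,z\ra|$: it is not an immediate consequence of (1)--(5) and requires its own short one-variable argument. Everything else is either a direct substitution, a specialization of an earlier item, or entirely elementary.
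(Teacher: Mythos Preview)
Your proposal is correct. The only place where it differs materially from the paper is in the second inequality of (6), $|Q_z(w)|\lesssim|\varphi_z(w)||1-\la z,w\ra|^{1/2}$. The paper argues by a case split: when $|\varphi_z(w)|\leq\frac{1}{2}$ it uses the crude bound $|Q_z(w)|\leq|\varphi_z(w)|\,|1-\la z,w\ra|/(1-|z|^2)^{1/2}$ coming straight from the definition of $\varphi_z$, together with (3) to replace $(1-|z|^2)^{1/2}$ by $|1-\la z,w\ra|^{1/2}$; when $|\varphi_z(w)|>\frac{1}{2}$ it simply bounds $|Q_z(w)|\leq|z-w|\leq\sqrt{2}\,|1-\la z,w\ra|^{1/2}$ and absorbs the missing factor $|\varphi_z(w)|$ into the constant. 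Your route via the involution identity and the auxiliary estimate $|Q_z(u)|^2\leq 2|u|^2|1-\la u,z\ra|$ is cleaner in that it avoids any case analysis and delivers the explicit constant $C=\sqrt{2}$; the paper's argument is slightly more bare-handed but requires separate reasoning near and far from the diagonal. For the remaining items both you and the paper defer to standard references, and your sketches are adequate.
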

	
	\begin{proof}
		Most of the above are either well-known (cf. \cite{Rudinbookunitball, Zhubookspaces}) or straightforward to verify. The only part that requires some clarification is the second estimate in (6), i.e.,
		\begin{equation}\label{eqn: temp 12}
			|Q_z(w)|\lesssim|\varphi_z(w)||1-\la z,w\ra|^{1/2}.
		\end{equation}
		On the one hand, from the definition of $\varphi_z(w)$, we easily get the follow inequality,
		\[
		|Q_z(w)|\leq|\varphi_z(w)|\frac{|1-\la z,w\ra|}{(1-|z|^2)^{1/2}}.
		\]
		If $|\varphi_z(w)|\leq\frac{1}{2}$, then by (3), $|1-\la z,w\ra|\approx 1-|z|^2$. From this \eqref{eqn: temp 12} follows.
		
		On the other hand, since
		\[
		2|1-\la z,w\ra|\geq 2-2\mathrm{Re}\la z,w\ra\geq|z|^2+|w|^2-2\mathrm{Re}\la z,w\ra=|z-w|^2,
		\]
		for $|\varphi_z(w)|>\frac{1}{2}$, we obtain the following estimates,
		\[
		|Q_z(w)|=|Q_z(w-z)|\leq|z-w|\lesssim |1-\la z,w\ra|^{\frac{1}{2}}<2|\varphi_z(w)||1-\la z,w\ra|^{\frac{1}{2}}.
		\]
		Thus we get \eqref{eqn: temp 12} in both cases. This completes the proof of Lemma \ref{lem: Mobius basics}.
	\end{proof}
	
	\begin{lem}(\cite[Proposition 5.1.2]{Rudinbookunitball})\label{lem: d metric}
		The two variable function $d(z,w)=|1-\la z, w\ra|^{\frac{1}{2}}$ on $\overline{\bn}$ satisfies the triangle inequality, i.e.,
		\[
		d(z,w)\leq d(z,\xi)+d(\xi,w),\quad\forall z, w, \xi\in\overline{\bn}.
		\]
	\end{lem}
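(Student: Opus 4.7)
The plan is to reduce the triangle inequality for $d$ to the ordinary triangle inequality on $\mathbb{C}$, applied to a three-term algebraic decomposition of $1 - \la z, w\ra$. The key identity, verified by direct expansion, is
\[
1 - \la z, w\ra = \Bigl[(1 - \la z, \xi\ra) - \tfrac{1-|\xi|^2}{2}\Bigr] + \Bigl[(1 - \la \xi, w\ra) - \tfrac{1-|\xi|^2}{2}\Bigr] + \la z - \xi, \xi - w\ra.
\]
The introduction of the correction term $(1-|\xi|^2)/2$ in each bracket is the crucial device for handling $\xi$ in the interior of $\bn$; when $\xi \in \sn$ these corrections vanish and the identity specializes to the standard one used to prove the triangle inequality on the sphere.

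The target is to bound each of the three summands on the right-hand side, in absolute value, by $d(z,\xi)^2$, $d(\xi,w)^2$, and $2\,d(z,\xi)\,d(\xi,w)$ respectively; adding and taking a square root then gives the result. First I would establish the elementary inequality
\[
\Bigl|(1 - \la u, \eta\ra) - \tfrac{1-|\eta|^2}{2}\Bigr| \leq |1 - \la u, \eta\ra| \qquad (u, \eta \in \overline{\bn}),
\]
which, after expanding the modulus squared on both sides, reduces to the scalar bound $\mathrm{Re}\la u, \eta\ra \leq (3+|\eta|^2)/4$. The latter follows from $\mathrm{Re}\la u, \eta\ra \leq |\eta|$ (Cauchy--Schwarz together with $|u|\leq 1$) and the scalar inequality $|\eta| \leq (3+|\eta|^2)/4$ on $[0,1]$, which is equivalent to $(|\eta|-1)(|\eta|-3)\geq 0$. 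Second, I would combine Cauchy--Schwarz on $\cn$ with the calculation
\[
|u-\eta|^2 = 2\mathrm{Re}(1 - \la u, \eta\ra) - (1-|u|^2) - (1-|\eta|^2) \leq 2|1-\la u, \eta\ra|
\]
to bound the cross term by $|z-\xi|\,|\xi - w| \leq 2\,d(z,\xi)\,d(\xi,w)$.

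Combining the three estimates with the triangle inequality on $\mathbb{C}$ gives $|1-\la z,w\ra| \leq (d(z,\xi)+d(\xi,w))^2$, and taking the square root completes the argument. The main technical obstacle is choosing the correct algebraic decomposition at the outset: the naive splitting $1 - \la z, w\ra = (1 - \la z, \xi\ra) + (1 - \la \xi, w\ra) - (1 - |\xi|^2) + \la z-\xi, \xi - w\ra$ leaves an additive $(1-|\xi|^2)$ term that cannot be absorbed into the desired squared-distance expression. Splitting this correction symmetrically between the first two brackets turns the obstruction into a pointwise bound that \emph{is} absorbed by the shift inequality above, and this is what makes the ball case work uniformly, not just the sphere case.
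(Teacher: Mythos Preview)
Your proof is correct. The paper does not supply its own argument for this lemma; it simply cites Rudin's book, so there is no in-paper proof to compare against line by line.

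That said, it is worth noting how your argument relates to the standard one. Rudin's Proposition~5.1.2 is stated on the sphere $\sn$, where $|\xi|=1$ and the identity collapses to $1-\la z,w\ra=(1-\la z,\xi\ra)+(1-\la \xi,w\ra)+\la z-\xi,\xi-w\ra$; bounding the cross term by Cauchy--Schwarz and $|u-\eta|^2\le 2|1-\la u,\eta\ra|$ then finishes the sphere case immediately. Your contribution is the symmetric distribution of the correction $(1-|\xi|^2)$ between the first two brackets, together with the shift inequality $\bigl|(1-\la u,\eta\ra)-\tfrac{1-|\eta|^2}{2}\bigr|\le|1-\la u,\eta\ra|$, which cleanly extends the argument to all of $\overline{\bn}$ as the paper actually requires. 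The verification you give of that shift inequality---reducing to $\mathrm{Re}\la u,\eta\ra\le(3+|\eta|^2)/4$ via $(|\eta|-1)(|\eta|-3)\ge 0$---is correct, and the application to the second bracket goes through after conjugation since $(1-|\xi|^2)/2$ is real.
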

	
	\begin{lem}[Rudin-Forelli type estimates]\label{lem: Rudin Forelli generalizations}~
		
		\begin{itemize}
			\item[(1)] Suppose $t>-1$, $c\in\mathbb{R}$. Then there exists $C>0$ such that for any $z\in\bn$,
			\begin{equation}\label{eqn: Rudin-Forelli 1-1}
				\int_{\bn}\frac{(1-|w|^2)^t}{|1-\la z,w\ra|^{n+1+t+c}}\intd m(w)\leq\begin{cases}
					C(1-|z|^2)^{-c},&c>0,\\
					C\ln\frac{1}{1-|z|^2},&c=0,\\
					C,&c<0,
				\end{cases}
			\end{equation}
			\begin{equation}\label{eqn: Rudin-Forelli 1-2}
				\int_{\sn}\frac{1}{|1-\la z,w\ra|^{n+c}}\intd\sigma(w)\leq \begin{cases}
					C(1-|z|^2)^{-c},&c>0,\\
					C\ln\frac{1}{1-|z|^2},&c=0,\\
					C,&c<0.
				\end{cases}
			\end{equation}
			\item[(2)] Suppose $t>-1$, $a, b, c>0$, $a\geq c, b\geq c$, and $a+b<n+1+t+c$. Then there exists $C>0$ such that for any $z, \xi\in\bn$,
			\begin{equation}\label{eqn: Rudin-Forelli 2}
				\int_{\bn}\frac{(1-|w|^2)^t}{|1-\la z, w\ra|^a|1-\la w, \xi\ra|^b}\intd m(w)\leq C\frac{1}{|1-\la z,\xi\ra|^c}.
			\end{equation}
			\item[(3)] Suppose $\phi:(0,1)\to[0,\infty)$ is measurable. Suppose $a>-n$, $b\in\mathbb{R}$, and
			\[
			\phi(s)\lesssim s^a(1-s)^b,\quad s\in(0,1).
			\]
			Then for any $t>-1-b$, $c>-b$ there exists $C>0$ such that for any $z\in\bn$,
			\begin{equation}\label{eqn: Rudin-Forelli 3}
				\int_{\bn}\phi(|\varphi_z(w)|^2)\frac{(1-|w|^2)^t}{|1-\la z,w\ra|^{n+1+t+c}}\intd m(w)\leq C(1-|z|^2)^{-c}.
			\end{equation}
		\end{itemize}
	\end{lem}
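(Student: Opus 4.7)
The three estimates are related: (1) is the classical Rudin--Forelli integral and its sphere analogue, (2) is the standard two--point extension, and (3) is an extension of (1) involving an extra factor $\phi(|\varphi_z(w)|^2)$. My plan is to treat (1) and (2) by reference to the standard proofs and concentrate the work on (3), which reduces to (1) after a change of variables.

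For (1), the plan is to use polar coordinates $w=r\zeta$ with $\zeta\in\sn$, which reduces the ball integral to
\[
\int_0^1 2n\, r^{2n-1}(1-r^2)^t\int_{\sn}\frac{1}{|1-r\la z,\zeta\ra|^{n+c}}\,\intd\sigma(\zeta)\,\intd r.
\]
The inner sphere integral is the classical estimate (e.g.\ Rudin's book on the unit ball, Proposition 1.4.10), and the outer radial integral is then an elementary Beta-type integral yielding the three cases according to the sign of $c$; the sphere estimate itself is established by the same three cases. For (2), the plan is to apply the quasi-triangle inequality of Lemma \ref{lem: d metric} to split $\bn$ into the two pieces where $|1-\la z,w\ra|\geq|1-\la w,\xi\ra|$ or the reverse. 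On each piece one factor can be replaced by a constant multiple of $|1-\la z,\xi\ra|$, after which the remaining integral is controlled by (1) with the exponent in $(1-|w|^2)$ still larger than $-1$ (this is where $a\geq c$, $b\geq c$, and $a+b<n+1+t+c$ enter).

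For (3), the plan is to perform the change of variables $w\mapsto\varphi_z(w)$. By Lemma \ref{lem: Mobius basics}(2), $1-|\varphi_z(w)|^2=\frac{(1-|z|^2)(1-|w|^2)}{|1-\la z,w\ra|^2}$, and applying Lemma \ref{lem: Mobius basics}(1) with $\zeta=z$ together with $\varphi_z(z)=0$ gives $1-\la z,\varphi_z(w)\ra=\frac{1-|z|^2}{1-\la z,w\ra}$. Combining these identities with the real Jacobian $\frac{(1-|z|^2)^{n+1}}{|1-\la z,w\ra|^{2n+2}}$ from Lemma \ref{lem: Mobius basics}(4), a direct bookkeeping shows the integral in (3) equals
\[
(1-|z|^2)^{-c}\int_{\bn}\phi(|w|^2)\frac{(1-|w|^2)^t}{|1-\la z,w\ra|^{n+1+t-c}}\,\intd m(w).
\]
Using the hypothesis $\phi(s)\lesssim s^a(1-s)^b$, the remaining integrand is bounded by $|w|^{2a}(1-|w|^2)^{t+b}|1-\la z,w\ra|^{-(n+1+(t+b)-(b+c))}$. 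The assumption $t>-1-b$ ensures the weight exponent $t+b>-1$, so (1) applies with effective parameter $-(b+c)$; since $c>-b$ means $b+c>0$, the effective exponent is negative, and (1) yields a bound independent of $z$. The condition $a>-n$ is used only for the local integrability of $|w|^{2a}$ at the origin, where $|1-\la z,w\ra|$ is bounded.

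The main bookkeeping obstacle is in (3): tracking the powers of $(1-|z|^2)$ and $|1-\la z,w\ra|$ through the substitution and verifying that the resulting reduced integral genuinely falls into the bounded case of (1) under the stated hypotheses on $a$, $b$, $c$, $t$. Parts (1) and (2) require no new ideas beyond the standard literature.
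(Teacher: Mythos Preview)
Your proposal is correct and follows essentially the same approach as the paper: part (1) is referred to Rudin, part (2) is proved by the quasi-triangle splitting of $\bn$ via Lemma \ref{lem: d metric}, and part (3) is obtained by the M\"obius substitution $w=\varphi_z(\xi)$ followed by reduction to (1). The only cosmetic difference is that after the substitution in (3) the paper passes to polar coordinates and applies the sphere estimate \eqref{eqn: Rudin-Forelli 1-2} to the angular integral, whereas you split into $|\xi|<1/2$ and $|\xi|\geq 1/2$ and apply the ball estimate \eqref{eqn: Rudin-Forelli 1-1}; both routes use the hypotheses $a>-n$, $t>-1-b$, $c>-b$ in the same way.
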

	
	\begin{proof}
		The estimates in (1) are standard Rudin-Forelli estimates. See \cite[Proposition 1.4.10]{Rudinbookunitball} for a proof.
		Let
		\[
		A=\{w\in\bn: |1-\la z,w\ra|\leq|1-\la w,\xi|\};\quad B=\{w\in\bn: |1-\la z,w\ra|>|1-\la w,\xi\ra|\}.
		\]
		By Lemma \ref{lem: d metric}, we have the following equality,
		\[
		|1-\la z,\xi\ra|^{1/2}\leq|1-\la z,w\ra|^{1/2}+|1-\la w,\xi\ra|^{1/2}.
		\]
		Then we obtain the following bounds,
		\[
		|1-\la z,w\ra|\geq \frac{1}{4}|1-\la z,\xi\ra|,\forall w\in B;\quad |1-\la w,\xi\ra|\geq\frac{1}{4}|1-\la z,\xi\ra|,\forall w\in A.
		\]
		By assumption, $a\geq c$, $b\geq c$, $a+b-c<n+1+t$, therefore by the standard Rudin-Forelli estimate, we compute the integral as follows,
		\begin{flalign*}
			&\int_{\bn}\frac{(1-|w|^2)^t}{|1-\la z,w\ra|^a|1-\la w,\xi\ra|^b}\intd m(w)\\
			=&\int_{A}\frac{(1-|w|^2)^t}{|1-\la z,w\ra|^a|1-\la w,\xi\ra|^b}\intd m(w)+\int_{B}\frac{(1-|w|^2)^t}{|1-\la z,w\ra|^a|1-\la w,\xi\ra|^b}\intd m(w)\\
			\lesssim&\frac{1}{|1-\la z,\xi\ra|^c}\int_{A}\frac{(1-|w|^2)^t}{|1-\la z,w\ra|^{a+b-c}}\intd m(w)+\frac{1}{|1-\la z,\xi\ra|^c}\int_{B}\frac{(1-|w|^2)^t}{|1-\la w,\xi\ra|^{b+a-c}}\intd m(w)\\
			\lesssim&\frac{1}{|1-\la z,\xi\ra|^c},
		\end{flalign*}
		This proves (2).
		
		To prove (3), make the change of variable $w=\varphi_z(\xi)$ in the left hand side of \eqref{eqn: Rudin-Forelli 3}. We compute the integral as follows,
		\begin{flalign*}
			&\int_{\bn}\phi(|\varphi_z(w)|^2)\frac{(1-|w|^2)^t}{|1-\la z,w\ra|^{n+1+t+c}}\intd m(w)\\
			=&\frac{1}{(1-|z|^2)^c}\int_{\bn}\phi(|\xi|^2)\frac{(1-|\xi|^2)^t}{|1-\la z,\xi\ra|^{n+1+t-c}}\intd m(\xi)\\
			=&\frac{1}{(1-|z|^2)^c}\int_0^1\phi(r^2)r^{2n-1}(1-r^2)^t\int_{\sn}\frac{1}{|1-\la z,r\eta\ra|^{n+1+t-c}}\intd \sigma(\eta)\intd r\\
			\lesssim&\frac{1}{(1-|z|^2)^c}\int_0^1\phi(r^2)r^{2n-1}(1-r^2)^m\intd r,
		\end{flalign*}
		where $m=c-1$ when $1+t-c>0$, $m=t$ when $1+t-c<0$, and when $1+t-c=0$, we take $m=t-\epsilon$ for a sufficiently small $\epsilon>0$. With our assumption it is easy to see that $m>-b-1$ and therefore the integral above is finite. This proves (3) and completes the proof of Lemma \ref{lem: d metric}.
	\end{proof}

	For $p>0$, a bounded operator $T$ on a Hilbert space $\mathcal{H}$ is said to be in the Schatten-$p$ class $\mathcal{S}^p$ if $|T|^p$ belongs to the trace class. The Schatten-$p$ class operators $\mathcal{S}^p$ are analogues of $l^p$ spaces in the operator-theoretic setting and satisfy the H\"{o}lder's inequality (see \cite[Theorem 2.8]{Simon}).
	
	The following lemma is well-known. See \cite[Theorem 6.4]{Zhu:bookoperator} for a proof at $n=1$. The same proof works for general $n$.
	\begin{lem}\label{lem: trace is integral of Berezin}
		Suppose $t>-1$ and $T$ is a trace class operator on $\bert$. Then
		\[
		\Tr T=\int_{\bn}\la T\BKt_z,\BKt_z\ra\intd\lambda_t(z).
		\]
	\end{lem}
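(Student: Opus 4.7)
The plan is to exploit the reproducing property of $\BKt_z$ together with the singular value decomposition of $T$, and then to apply Fubini/Tonelli to interchange a sum and an integral. Write
\[
T=\sum_n \mu_n \la\,\cdot\,,f_n\ra g_n,
\]
where $\{f_n\},\{g_n\}\subset\bert$ are orthonormal systems and $\sum_n\mu_n=\|T\|_1<\infty$. Since $h(z)=\la h,\BKt_z\ra$ for every $h\in\bert$, we have $\la \BKt_z,h\ra=\overline{h(z)}$, which gives the pointwise identity
\[
\la T\BKt_z,\BKt_z\ra=\sum_n \mu_n \la \BKt_z,f_n\ra\la g_n,\BKt_z\ra=\sum_n \mu_n \overline{f_n(z)}\,g_n(z).
\]

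Next I would establish absolute integrability. By Cauchy--Schwarz in $L^2(\bn,\lambda_t)$,
\[
\int_{\bn}|f_n(z)g_n(z)|\,\intd\lambda_t(z)\leq \|f_n\|\,\|g_n\|=1,
\]
so that
\[
\int_{\bn}\sum_n \mu_n |f_n(z)g_n(z)|\,\intd\lambda_t(z)\leq\sum_n \mu_n=\|T\|_1<\infty.
\]
Tonelli's theorem then legitimates termwise integration of the series.

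Integrating the expansion and using $\la g_n,f_n\ra=\int_{\bn}g_n(z)\overline{f_n(z)}\,\intd\lambda_t(z)$, I obtain
\[
\int_{\bn}\la T\BKt_z,\BKt_z\ra\,\intd\lambda_t(z)=\sum_n \mu_n \la g_n,f_n\ra.
\]
On the other hand, completing $\{f_n\}$ to an orthonormal basis of $\bert$ by vectors on which $T$ vanishes (which lie in the orthogonal complement of the closed span of $\{f_n\}$), a direct computation gives $\Tr T=\sum_n \la Tf_n,f_n\ra=\sum_n \mu_n \la g_n,f_n\ra$, matching the right-hand side above. The only genuine point requiring care is the Fubini interchange in the second step, which I handle above through the Cauchy--Schwarz bound; apart from that the argument is routine and dimension-independent, so the same proof works for all $n\geq 1$.
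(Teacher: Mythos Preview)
Your argument is correct and follows the standard route (singular value decomposition plus the reproducing property and Fubini/Tonelli), which is precisely the approach in Zhu's \emph{Operator Theory in Function Spaces}, Theorem~6.4, that the paper cites in lieu of a proof. One cosmetic point: you use $n$ both for the summation index and for the dimension of $\bn$, so in a write-up you should rename the index to avoid the clash.
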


	\section{Trace Formulas on the Disk}\label{sec: trace formulas on the disk}
	
	In this section we give the proof of Theorem \ref{thm: semi-commutator trace in dim 1}. The main ingredient of its proof is an integral formula coming from the Cauchy formula.
	
	\begin{defn}\label{defn: FG on disk}
		Suppose $t>-1$ and $\phi:(0,1)\to[0,\infty)$ is measurable. Define the operations on $\phi$:
		\[
		\BFt\phi(s)=\int_s^1\phi(r)(1-r)^t\intd r,\quad \BGt\phi(s)=s^{-1}(1-s)^{-t-1}\BFt\phi(s).
		\]
	\end{defn}
	
	\begin{lem}\label{lem: formula D 0}
		Suppose that $t>-1$, and $\phi:(0,1)\mapsto[0,\infty)$ is a measurable function, and $v\in\mathscr{C}^1(\overline{\dd})$. Assume that the two integrals
		\[
		\int_{\dd}\phi(|z|^2)v(z)\intd\lambda_t(z),\quad\int_{\dd}(1-|z|^2)\BGt\phi(|z|^2)\bz\bpartial v(z)\intd\lambda_t(z)
		\]
		both converge absolutely. Then
		\begin{flalign}\label{eqn: formula D 0}
			&\int_{\dd}\phi(|z|^2)v(z)\intd\lambda_t(z)
			\\
			=&\begin{cases}
				(t+1)\BFt\phi(0)\cdot v(0)+\int_{\dd}(1-|z|^2)\BGt\phi(|z|^2)\bz\bpartial v(z)\intd\lambda_t(z),&v(0)\neq0, \BFt\phi(0)<\infty,\\
				\int_{\dd}(1-|z|^2)\BGt\phi(|z|^2)\bz\bpartial v(z)\intd\lambda_t(z),&v(0)=0, \BFt\phi(0)\leq\infty.
			\end{cases}\nonumber
		\end{flalign}
	\end{lem}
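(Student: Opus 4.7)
The plan is to apply Stokes's theorem on the punctured disk $\dd\setminus B(0,\epsilon)$ to the $(1,0)$-form $z^{-1}\psi(|z|^2)v(z)\intd z$, where $\psi:=\BFt\phi$, and let $\epsilon\to 0^+$. The choice is dictated by two features of $\psi$: it is the antiderivative with $\psi'(s)=-\phi(s)(1-s)^t$ and $\psi(1)=0$, while $1/z$ is holomorphic on $\dd\setminus\{0\}$. Together these yield
\[
\bpartial\!\left(\frac{\psi(|z|^2)}{z}\right)=\psi'(|z|^2)=-\phi(|z|^2)(1-|z|^2)^t \qquad\text{on }\dd\setminus\{0\},
\]
so the desired $\phi(|z|^2)(1-|z|^2)^t v$ appears naturally in the expansion of $\bpartial\bigl(z^{-1}\psi(|z|^2)v(z)\bigr)$, and the outer boundary in Stokes's theorem will automatically vanish.

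Stokes's theorem, using $\intd \bz\wedge\intd z=2i\intd m$, on the annulus $\dd\setminus B(0,\epsilon)$ then gives
\[
\int_{\dd\setminus B(0,\epsilon)}\!\!\left[\phi(|z|^2)(1-|z|^2)^tv(z)-\frac{\psi(|z|^2)}{z}\bpartial v(z)\right]\intd m(z)= \frac{1}{2i}\oint_{|z|=\epsilon}\frac{\psi(\epsilon^2)}{z}v(z)\intd z.
\]
Parameterizing the inner circle by $z=\epsilon e^{i\theta}$ and using continuity of $v$ at $0$ shows the right-hand side equals $\pi\psi(\epsilon^2)v(0)+O(\epsilon\psi(\epsilon^2))$. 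Multiplying the resulting identity by $(t+1)/\pi$ and using the algebraic simplification $(1-|z|^2)\BGt\phi(|z|^2)\bz\cdot(1-|z|^2)^t=z^{-1}\BFt\phi(|z|^2)$ converts it into the formula of the lemma, up to the boundary contribution $(t+1)\psi(\epsilon^2)v(0)+O(\epsilon\psi(\epsilon^2))$.

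The main technical point is justifying the limit $\epsilon\to 0^+$. When $v(0)\neq 0$, the hypothesis $\BFt\phi(0)<\infty$ makes $\psi(\epsilon^2)\to\psi(0)$, and the $O(\epsilon\psi(\epsilon^2))$ error vanishes, yielding the first case. When $v(0)=0$ one needs $\epsilon\psi(\epsilon^2)\to 0$ even in the case $\psi(0)=\infty$. Here I invoke absolute convergence of the second integral: since $\bpartial v$ is bounded on $\overline{\dd}$ (as $v\in\mathscr{C}^1(\overline{\dd})$), this convergence reduces in polar coordinates to $\int_0^1\psi(s)s^{-1/2}\intd s<\infty$, and the monotonicity of $\psi$ gives $2\sqrt{\delta}\,\psi(\delta)\leq \int_0^\delta\psi(s)s^{-1/2}\intd s\to 0$. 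Setting $\delta=\epsilon^2$ produces $\epsilon\psi(\epsilon^2)\to 0$ and delivers the second case. This last step is the only place where the convergence hypotheses are genuinely used, and it is where the case distinction in the statement comes from.
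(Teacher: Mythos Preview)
Your Stokes-on-a-punctured-disk argument and the paper's proof are two packagings of the same Cauchy integration by parts. The paper instead writes the left side in polar coordinates, applies the Cauchy formula $\int_{r\tori}v\,\intd\sigma_r=2r\int_{r\dd}z^{-1}\bpartial v\,\intd m$ on each circle (valid because $v(0)=0$), and then swaps the $r$- and $z$-integrals by Fubini, justified directly by absolute convergence of the second hypothesis integral. No inner boundary term ever appears.

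Your handling of that inner boundary term has a gap. You claim that boundedness of $\bpartial v$, together with absolute convergence of $\int_{\dd}|z|^{-1}\psi(|z|^2)\bpartial v\,\intd m$, yields $\int_0^1\psi(s)s^{-1/2}\,\intd s<\infty$. This is the wrong direction: boundedness of $\bpartial v$ furnishes only an \emph{upper} bound on the integrand, so finiteness of $\int_0^1\psi(s)s^{-1/2}\,\intd s$ would imply the hypothesis, not conversely. If $\bpartial v$ vanishes on a neighbourhood of $0$, the hypothesis gives no information about $\psi$ near $0$, and $\epsilon\psi(\epsilon^2)$ need not tend to $0$.

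The repair is short and stays within your framework. By Cauchy on $\epsilon\dd$ (using $v(0)=0$) one has $\int_0^{2\pi}v(\epsilon e^{i\theta})\,\intd\theta=2\int_{\epsilon\dd}z^{-1}\bpartial v\,\intd m$, so your boundary term equals $\psi(\epsilon^2)\int_{\epsilon\dd}z^{-1}\bpartial v\,\intd m$. Since $\psi$ is nonincreasing, $\psi(\epsilon^2)\le\psi(|z|^2)$ for $|z|<\epsilon$, hence
\[
\Bigl|\psi(\epsilon^2)\int_{\epsilon\dd}z^{-1}\bpartial v\,\intd m\Bigr|\le\int_{\epsilon\dd}|z|^{-1}\psi(|z|^2)\,|\bpartial v|\,\intd m\longrightarrow 0
\]
by absolute integrability of the second hypothesis integral. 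Alternatively, rearranging your computation as the paper does (polar coordinates, then Cauchy on each circle, then Fubini) sidesteps the issue entirely.
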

	
	\begin{proof}
		Assume first that $v(0)=0$.
		For any $0<r<1$, denote $\sigma_r$ the Euclidean surface measure on $r\tori$. If we apply the Cauchy Formula to $\Omega=r\dd\subset\CC$ and $v\in\mathscr{C}^1(\overline{\dd})$, then we have the following equation of integrals,
		\begin{equation*}
			\int_{r\tori}v(z)\intd\sigma_r(z)=2r\int_{r\dd}\frac{\bpartial v(z)}{z}\intd m(z).
		\end{equation*}
		By assumption the left hand side of \eqref{eqn: formula D 0} is absolutely integrable, so we compute the integral as follows,
		\begin{flalign*}
			\int_{\dd}\phi(|z|^2)v(z)\intd\lambda_t(z)
			=&\frac{(t+1)}{\pi}\int_0^1\phi(r^2)(1-r^2)^t\bigg\{\int_{r\tori}v(z)\intd\sigma_r(z)\bigg\}\intd r\\
			=&\frac{(t+1)}{\pi}\int_0^1\phi(r^2)(1-r^2)^t\bigg\{2r\int_{r\dd}\frac{\bpartial v(z)}{z}\intd m(z)\bigg\}\intd r\\
			=&\frac{(t+1)}{\pi}\int_{\dd}\int_{|z|}^12r\phi(r^2)(1-r^2)^t\intd r\cdot\frac{\bpartial v(z)}{z}\intd m(z)\\
			=&\frac{(t+1)}{\pi}\int_{\dd}\bigg[\int_{|z|^2}^1\phi(s)(1-s)^t\intd s\bigg]\frac{\bpartial v(z)}{z}\intd m(z)\\
			=&\int_{\dd}(1-|z|^2)\BGt\phi(|z|^2)\bz\bpartial v(z)\intd\lambda_t(z).
		\end{flalign*}
		The absolute convergence of the integral in the right hand side of \eqref{eqn: formula D 0} ensures the third equality above.
		This proves the second case.
		
		Now assume that $v(0)\neq0$, and $\BFt\phi(0)<\infty$. We notice that
		\[
		\int_{\dd}\phi(|z|^2)\intd\lambda_t(z)=2\pi\cdot\frac{t+1}{\pi}\int_0^1r\phi(r^2)(1-r^2)^t\intd r=(t+1)\BFt\phi(0).
		\]
		Then we get the following estimate
		\[
		\int_{\dd}\bigg|\phi(|z|^2)\big(v(z)-v(0)\big)\bigg|\intd\lambda_t(z)\leq\int_{\dd}\bigg|\phi(|z|^2)v(z)\bigg|\intd\lambda_t(z)+(t+1)\BFt\phi(0)|v(0)|<\infty.
		\]
		Applying the second case to $v(z)-v(0)$ and reorganizing the terms give the first case. This completes the proof of Lemma \ref{lem: formula D 0}.
	\end{proof}

	\begin{lem}\label{lem: formula D z}
		Suppose that $t>-1$, and $\phi: (0,1)\to[0,\infty)$ is a measurable function, and $v\in\mathscr{C}^1(\overline{\dd})$.
		\begin{itemize}
			\item[(1)] Assuming that $z\in\dd$, and the integrals
			\begin{eqnarray*}
				&\int_{\dd}\phi(|\varphi_z(w)|^2)v(w)\BKt_w(z)\intd\lambda_t(w),\\
				&\int_{\dd}\BGt\phi(|\varphi_z(w)|^2)\frac{(1-|w|^2)\overline{(z-w)}}{1-w\bz}\bpartial v(w)\BKt_w(z)\intd\lambda_t(w)
			\end{eqnarray*}
			converge absolutely, then
			\begin{flalign}\label{eqn: formula D z d''}
				&\int_{\dd}\phi(|\varphi_z(w)|^2)v(w)\BKt_w(z)\intd\lambda_t(w)\\
				=&\begin{cases}
					(t+1)\BFt\phi(0)\cdot v(z)&v(z)\neq0, \BFt\phi(0)<\infty,\\
					~~-\int_{\dd}\BGt\phi(|\varphi_z(w)|^2)\frac{(1-|w|^2)\overline{(z-w)}}{1-w\bz}\bpartial v(w)\BKt_w(z)\intd\lambda_t(w),\\
					\\
					-\int_{\dd}\BGt\phi(|\varphi_z(w)|^2)\frac{(1-|w|^2)\overline{(z-w)}}{1-w\bz}\bpartial v(w)\BKt_w(z)\intd\lambda_t(w),&v(z)=0,\BFt\phi(0)\leq\infty.
				\end{cases}\nonumber
			\end{flalign}
			\item[(2)] Assuming that $w\in\dd$, and the integrals
			\begin{eqnarray*}
				\int_{\dd}\phi(|\varphi_z(w)|^2)v(z)\BKt_w(z)\intd\lambda_t(z),\qquad \int_{\dd}\BGt\phi(|\varphi_z(w)|^2)\frac{(1-|z|^2)(z-w)}{1-w\bz}\partial v(z)\BKt_w(z)\intd\lambda_t(z)
			\end{eqnarray*}
			converge absolutely. Then
			\begin{flalign}\label{eqn: formula D z d'}
				&\int_{\dd}\phi(|\varphi_z(w)|^2)v(z)\BKt_w(z)\intd\lambda_t(z)\\
				=&\begin{cases}
					(t+1)\BFt\phi(0)\cdot v(w)&v(w)\neq0, \BFt\phi(0)<\infty,\\
					~~+\int_{\dd}\BGt\phi(|\varphi_z(w)|^2)\frac{(1-|z|^2)(z-w)}{1-w\bz}\partial v(z)\BKt_w(z)\intd\lambda_t(z),\\
					\\
					\int_{\dd}\BGt\phi(|\varphi_z(w)|^2)\frac{(1-|z|^2)(z-w)}{1-w\bz}\partial v(z)\BKt_w(z)\intd\lambda_t(z),&v(w)=0, \BFt\phi(0)\leq\infty.
				\end{cases}\nonumber
			\end{flalign}
		\end{itemize}

	\end{lem}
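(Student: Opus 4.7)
The plan is to reduce both parts to Lemma \ref{lem: formula D 0} via a Möbius change of variable on $\dd$. For Part (1), I would set $w = \varphi_z(\xi)$, so that $\xi = \varphi_z(w)$ and $|\varphi_z(w)|^2 = |\xi|^2$, and exploit the identities in Lemma \ref{lem: Mobius basics}---notably $1 - \bar z\xi = (1-|z|^2)/(1-\bar z w)$, the real Jacobian $(1-|z|^2)^2/|1-\bar z w|^4$, and the transformation of $1-|w|^2$---to obtain the clean identity
\[
\BKt_w(z)\,\intd\lambda_t(w) = \frac{1}{(1-\bar z\xi)^{t+2}}\,\intd\lambda_t(\xi).
\]
Consequently the left-hand side of \eqref{eqn: formula D z d''} becomes $\int_\dd \phi(|\xi|^2)\,u(\xi)\,\intd\lambda_t(\xi)$, where
\[
u(\xi) := v(\varphi_z(\xi))\cdot\frac{1}{(1-\bar z\xi)^{t+2}}
\]
is in $\mathscr{C}^1(\overline{\dd})$ (the denominator is bounded away from $0$ since $|\bar z\xi|\leq|z|<1$), and $u(0) = v(z)$.

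Applying Lemma \ref{lem: formula D 0} to $u$ yields the constant term $(t+1)\BFt\phi(0)\,v(z)$ directly. Since the rational factor $1/(1-\bar z\xi)^{t+2}$ is holomorphic in $\xi$, the derivative reduces to
\[
\bpartial u(\xi) = (\bpartial v)(\varphi_z(\xi))\cdot\overline{\varphi_z'(\xi)}\cdot\frac{1}{(1-\bar z\xi)^{t+2}}, \qquad \varphi_z'(\xi) = -\frac{1-|z|^2}{(1-\bar z\xi)^2}.
\]
Pulling back to the $w$-variable via $\xi = \varphi_z(w)$ and collecting powers of $1-|z|^2$, $1-\bar z w$, and $1-z\bar w$, all factors of $1-|z|^2$ cancel, and the residual rational expression collapses to $\BKt_w(z)/(1-w\bar z)$, reproducing exactly the integrand on the right of \eqref{eqn: formula D z d''}.

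Part (2) is entirely analogous, but requires first deriving a dual form of Lemma \ref{lem: formula D 0} by conjugating the Cauchy identity used in its proof to $\int_{r\tori} v\,\intd\sigma_r = 2r\int_{r\dd}\tfrac{\partial v}{\bar z}\,\intd m(z)$; this produces the same conclusion with $\bar z\,\bpartial v$ replaced by $z\,\partial v$. One then substitutes $z = \varphi_w(\zeta)$ and uses the analogous identity $\BKt_w(z)\,\intd\lambda_t(z) = (1-w\bar\zeta)^{-(t+2)}\,\intd\lambda_t(\zeta)$. Here the rational factor is antiholomorphic in $\zeta$, so $\partial$ annihilates it and only $\partial v$ survives in the output, giving \eqref{eqn: formula D z d'}.

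The main obstacle is purely bookkeeping: one must carefully verify that the many factors of $1-|z|^2$, $1-\bar z w$, and $1-z\bar w$ arising from the Jacobian, the transformed weight, the derivative of $\varphi_z$, and the transformed reproducing kernel cancel to leave exactly the factor $\BKt_w(z)/(1-w\bar z)$ multiplying $\bpartial v(w)$. The absolute convergence hypotheses on both sides of the stated formula are precisely what is needed to transfer the convergence hypothesis of Lemma \ref{lem: formula D 0} through the change of variables and to justify applying it to $u$.
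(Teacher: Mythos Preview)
Your approach is correct and essentially identical to the paper's for Part~(1): the paper performs the same change of variable $w=\varphi_z(\xi)$, writes the transformed measure as $\BKt_z(\xi)\,\intd\lambda_t(\xi)$ (which is your $(1-\bar z\xi)^{-(t+2)}\,\intd\lambda_t(\xi)$), applies Lemma~\ref{lem: formula D 0} to $u(\xi)=v(\varphi_z(\xi))\BKt_z(\xi)$, and then changes back. For Part~(2), the paper takes a slightly shorter route than yours: rather than first establishing a dual version of Lemma~\ref{lem: formula D 0} and repeating the M\"obius argument, it simply applies the already-proved Part~(1) to $\bar v$, swaps the roles of $z$ and $w$, and conjugates the resulting identity (using $|\varphi_z(w)|=|\varphi_w(z)|$ and $\overline{\BKt_z(w)}=\BKt_w(z)$). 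Your route works equally well; the paper's just avoids redoing the change-of-variable bookkeeping.
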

	
	\begin{proof} First, we prove case (1). The formula is obtained from \eqref{eqn: formula D 0} by taking M\"{o}bius transforms. By Lemma \ref{lem: Mobius basics} (4) it is easy to verify the following equation,
		\begin{equation}\label{eqn: Mobius and kernel}
			\BKt_w(z)\intd\lambda_t(w)\xlongequal[\xi=\varphi_z(w)]{w=\varphi_z(\xi)}\BKt_z(\xi)\intd\lambda_t(\xi).
		\end{equation}	
		By Lemma \ref{lem: Mobius basics} (1)(2), Lemma \ref{lem: formula D 0} and the above equality, if $\BFt\phi(0)<\infty$, then we compute the integral as follows.
		\begin{flalign*}
			&\int_{\dd}\phi(|\varphi_z(w)|^2)v(w)\BKt_w(z)\intd\lambda_t(w)\\
			\xlongequal{w=\varphi_z(\xi)}&\int_{\dd}\phi(|\xi|^2)v\circ\varphi_z(\xi)\BKt_z(\xi)\intd\lambda_t(\xi)\\
			\xlongequal{\eqref{eqn: formula D 0}}&(t+1)\BFt\phi(0)\cdot v(z)+\int_{\dd}\BGt\phi(|\xi|^2)(1-|\xi|^2)\bar{\xi}\bpartial_\xi\bigg(v\circ\varphi_z(\xi)\BKt_z(\xi)\bigg)\intd\lambda_t(\xi)\\
			=&(t+1)\BFt\phi(0)\cdot v(z)+\int_{\dd}\BGt\phi(|\xi|^2)(1-|\xi|^2)\bar{\xi}\bpartial v(\varphi_z(\xi))\overline{\varphi_z'(\xi)}\BKt_z(\xi)\intd\lambda_t(\xi)\\
			=&(t+1)\BFt\phi(0)\cdot v(z)-\int_{\dd}\BGt\phi(|\xi|^2)(1-|\xi|^2)\bar{\xi}\bpartial v(\varphi_z(\xi))\frac{1-|z|^2}{(1-z\bar{\xi})^2}\BKt_z(\xi)\intd\lambda_t(\xi)\\
			\xlongequal{\xi=\varphi_z(w)}&(t+1)\BFt\phi(0)\cdot v(z)-\int_{\dd}\BGt\phi(|\varphi_z(w)|^2)\frac{(1-|w|^2)\overline{(z-w)}}{1-w\bz}\bpartial v(w)\BKt_w(z)\intd\lambda_t(w).
		\end{flalign*}
		The case when $\BFt\phi(0)=\infty, v(z)=0$ is proved by the same equations as above, but with the term ``$(t+1)\BFt\phi(0)\cdot v(z)$'' removed. This proves (1). To prove (2), apply \eqref{eqn: formula D z d''} to $\overline{v(z)}$, then swap $z$ and $w$, then take conjugate over the equation. Note that the equations $|\varphi_z(w)|=|\varphi_w(z)|$ and $\overline{\BKt_z(w)}=\BKt_w(z)$ is used here. This completes the proof of Lemma \ref{lem: formula D z}.
	\end{proof}
	Recall that in Theorem \ref{thm: semi-commutator trace in dim 1} we defined
	\[
	F(s,x)=-\bigg[x\ln\frac{s}{x}+(1-x)\ln\frac{1-s}{1-x}\bigg].
	\]
	\begin{lem}\label{lem: F(s,x) as integral}
		Suppose $0<s<x<1$. Then
		\[
		\iint_{s<s_1<s_2<x}s_1^{-1}(1-s_1)^{-1}\intd s_1\intd s_2=F(s,x)
		\]
	\end{lem}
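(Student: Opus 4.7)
The plan is to reduce the two dimensional integral to a one variable integral by Fubini, then use partial fractions. Since the integrand $s_1^{-1}(1-s_1)^{-1}$ depends only on $s_1$, I would do the $s_2$ integration first. The inner integral is simply
\[
\int_{s_1}^{x} ds_2 = x - s_1,
\]
so the double integral reduces to
\[
\iint_{s<s_1<s_2<x} s_1^{-1}(1-s_1)^{-1}\, ds_1\, ds_2 = \int_s^x \frac{x-s_1}{s_1(1-s_1)}\, ds_1.
\]

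Next I would apply the partial fraction identity $\frac{1}{s_1(1-s_1)} = \frac{1}{s_1} + \frac{1}{1-s_1}$ and simplify:
\[
\frac{x-s_1}{s_1(1-s_1)} = \frac{x}{s_1} - 1 + \frac{x-s_1}{1-s_1} = \frac{x}{s_1} - \frac{1-x}{1-s_1},
\]
where the last equality uses $\frac{x-s_1}{1-s_1} = 1 + \frac{x-1}{1-s_1}$. Now the integral is elementary:
\[
\int_s^x \left[\frac{x}{s_1} - \frac{1-x}{1-s_1}\right] ds_1 = x\ln\frac{x}{s} + (1-x)\ln\frac{1-x}{1-s} = -\left[x\ln\frac{s}{x} + (1-x)\ln\frac{1-s}{1-x}\right],
\]
which is exactly $F(s,x)$.

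There is no real obstacle here; the computation is a routine Fubini plus partial fractions exercise. The only mild subtlety is keeping track of the sign when converting $\ln(x/s)$ to $-\ln(s/x)$ (and similarly for the $(1-x)$ term) so that the answer matches the definition of $F(s,x)$ given in the statement of Theorem \ref{thm: semi-commutator trace in dim 1}. Since $0<s<x<1$, both logarithms are real, and the iterated integral converges absolutely, so Fubini is automatically justified.
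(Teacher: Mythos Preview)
Your proposal is correct and follows essentially the same approach as the paper: integrate in $s_2$ first via Fubini to reduce to $\int_s^x \frac{x-s_1}{s_1(1-s_1)}\,\intd s_1$, then use the partial fraction decomposition $\frac{x-s_1}{s_1(1-s_1)}=\frac{x}{s_1}-\frac{1-x}{1-s_1}$ and integrate. The only difference is that you spell out the partial fraction algebra in slightly more detail.
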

	\begin{proof}
		\begin{flalign*}
			&\iint_{s<s_1<s_2<x}s_1^{-1}(1-s_1)^{-1}\intd s_1\intd s_2\\
			=&\int_s^x\int_{s_1}^xs_1^{-1}(1-s_1)^{-1}\intd s_2\intd s_1\\
			=&\int_s^x\frac{x-s_1}{s_1(1-s_1)}\intd s_1\\
			=&\int_s^x\frac{x}{s_1}-\frac{1-x}{1-s_1}\intd s_1\\
			=&x\ln\frac{x}{s}+(1-x)\ln\frac{1-x}{1-s}\\
			=&F(s,x).
		\end{flalign*}
		This completes the proof.
	\end{proof}
	
	\begin{lem}\label{lem: F(s,x) integral estimate}
		For $0<\epsilon<1$ there exists $C>0$ such that
		\[
		\int_0^x(1-s)^{-\epsilon} F(s,x)\intd s\leq Cx^2,\quad0<x<1.
		\]
	\end{lem}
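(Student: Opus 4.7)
The plan is to reduce the estimate to a tractable polynomial integral by invoking the double integral representation of $F(s,x)$ from Lemma \ref{lem: F(s,x) as integral} and switching the order of integration. Substituting that representation and applying Fubini (all integrands being non-negative), the left-hand side becomes
\[
\int_0^x(1-s)^{-\epsilon}F(s,x)\intd s=\iint_{0<s_1<s_2<x}\frac{1}{s_1(1-s_1)}\int_0^{s_1}(1-s)^{-\epsilon}\intd s\,\intd s_1\intd s_2 = \frac{1}{1-\epsilon}\iint_{0<s_1<s_2<x}\frac{1-(1-s_1)^{1-\epsilon}}{s_1(1-s_1)}\intd s_1\intd s_2,
\]
where the inner integral is evaluated explicitly as $(1-(1-s_1)^{1-\epsilon})/(1-\epsilon)$.

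The key ingredient is the elementary bound
\[
1-(1-s_1)^{1-\epsilon}\leq(1-\epsilon)\,s_1\,(1-s_1)^{-\epsilon},\qquad s_1\in(0,1),
\]
which follows from the mean value theorem applied to $u\mapsto 1-(1-u)^{1-\epsilon}$ on $[0,s_1]$, together with the fact that $u\mapsto(1-u)^{-\epsilon}$ is increasing on $(0,1)$. Inserting this estimate cancels both the factor $s_1$ and one factor of $(1-s_1)^{-1}$ in the denominator, leaving
\[
\int_0^x(1-s)^{-\epsilon}F(s,x)\intd s\leq\iint_{0<s_1<s_2<x}(1-s_1)^{-1-\epsilon}\intd s_1\intd s_2=\int_0^x(x-s_1)(1-s_1)^{-1-\epsilon}\intd s_1.
\]

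To finish, I would handle $x\leq 1/2$ and $x>1/2$ separately. For $x\leq 1/2$ the factor $(1-s_1)^{-1-\epsilon}$ is bounded by $2^{1+\epsilon}$ on $[0,x]$, which gives $\int_0^x(x-s_1)(1-s_1)^{-1-\epsilon}\intd s_1\leq 2^{\epsilon}x^2$ directly. For $x>1/2$ the crude bound $x-s_1\leq 1-s_1$ yields $\int_0^x(x-s_1)(1-s_1)^{-1-\epsilon}\intd s_1\leq\int_0^1(1-s_1)^{-\epsilon}\intd s_1=1/(1-\epsilon)$, and combining this with $x^2>1/4$ produces the bound $\frac{4}{1-\epsilon}x^2$. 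Taking $C=\max\{2^{\epsilon},4/(1-\epsilon)\}$ completes the proof. No step is genuinely difficult; the only subtlety is that after swapping the order of integration one sees an apparent $s_1^{-1}$ singularity, and it is precisely the MVT estimate above that absorbs it.
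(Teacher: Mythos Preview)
Your proof is correct but follows a different path from the paper's. The paper works directly from the explicit formula
\[
F(s,x)=-x\ln\frac{s}{x}-(1-x)\ln\frac{1-s}{1-x},
\]
splits $\int_0^x(1-s)^{-\epsilon}F(s,x)\intd s$ into three pieces, observes that the piece $(1-x)\ln(1-x)\int_0^x(1-s)^{-\epsilon}\intd s$ is negative and can be dropped, and then bounds the two remaining logarithmic integrals separately by elementary substitutions (in particular $r=s/x$ for the first) together with $-\ln(1-s)\lesssim s$. Your route instead feeds the double-integral representation of $F(s,x)$ from Lemma~\ref{lem: F(s,x) as integral} through Fubini, replaces the resulting innermost integral by the MVT bound $1-(1-s_1)^{1-\epsilon}\leq(1-\epsilon)s_1(1-s_1)^{-\epsilon}$, and finishes with a clean case split at $x=1/2$. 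Both arguments are short; yours has the advantage of never touching logarithms and of making the cancellation of the apparent $s_1^{-1}$ singularity transparent, while the paper's approach is slightly more self-contained since it does not invoke the auxiliary Lemma~\ref{lem: F(s,x) as integral}.
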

	
	\begin{proof}
		By definition,
		\begin{flalign*}
			&\int_0^x(1-s)^{-\epsilon} F(s,x)\intd s\\
			=&-x\int_0^x(1-s)^{-\epsilon} \ln\frac{s}{x}\intd s-(1-x)\int_0^x(1-s)^{-\epsilon} \ln(1-s)\intd s+(1-x)\ln (1-x)\int_0^x(1-s)^{-\epsilon}\intd s.
		\end{flalign*}
		For $0<x<1$, $\ln(1-x)<0$. Thus the last term in the above is negative. Therefore
		\begin{equation}\label{eqn: proof F(s,x) int}
			\int_0^x(1-s)^{-\epsilon} F(s,x)\intd s
			<-x\int_0^x(1-s)^{-\epsilon} \ln\frac{s}{x}\intd s-(1-x)\int_0^x(1-s)^{-\epsilon} \ln(1-s)\intd s.
		\end{equation}
		For the first term in the right hand side of \eqref{eqn: proof F(s,x) int}, take the change of variable $r=\frac{s}{x}$. Then
		\begin{equation}\label{eqn: temp 1}
			-x\int_0^x(1-s)^{-\epsilon}\ln\frac{s}{x}\intd s\xlongequal{r=s/x}-x^2\int_0^1(1-rx)^{-\epsilon}\ln r\intd r<-x^2\int_0^1(1-r)^{-\epsilon}\ln r\intd r\leq x^2.
		\end{equation}
		For the second term in the right hand side of \eqref{eqn: proof F(s,x) int}, notice that
		\[
		-\ln(1-s)\lesssim s.
		\]
		So
		\begin{equation}\label{eqn: temp 2}
			-(1-x)\int_0^x(1-s)^{-\epsilon} s\intd s\leq (1-x)^{1-\epsilon}\int_0^xs\intd s\lesssim x^2.
		\end{equation}
		Combining \eqref{eqn: proof F(s,x) int}, \eqref{eqn: temp 1} and \eqref{eqn: temp 2} gives the desired result.
	\end{proof}

	\begin{proof}[{\bf Proof of Theorem \ref{thm: semi-commutator trace in dim 1}}]
		By Lemma \ref{lem: trace is integral of Berezin}, we have the following expression of $\Tr\bigg(\BTt_f\BTt_g-\BTt_{fg}\bigg)$,
		\begin{flalign*}
			&\Tr\bigg(\BTt_f\BTt_g-\BTt_{fg}\bigg)=\int_{\dd}\la\bigg(\BTt_f\BTt_g-\BTt_{fg}\bigg)\BKt_\xi,\BKt_\xi\ra\intd\lambda_t(\xi)\\
			=&\int_{\dd}\bigg\{\int_{\dd^2}(f(z)-f(w))g(w)\BKt_\xi(w)\BKt_w(z)\BKt_z(\xi)\intd\lambda_t(w)\intd\lambda_t(z)\bigg\}\intd\lambda_t(\xi).
		\end{flalign*}
		The rest of the proof is simply iterating \eqref{eqn: formula D z d''} and \eqref{eqn: formula D z d'} on the integral above. For each fixed $\xi\in\dd$, we calculate the inner product $\la\bigg(\BTt_f\BTt_g-\BTt_{fg}\bigg)\BKt_\xi,\BKt_\xi\ra$ as follows,
		\begin{flalign*}
			&\la\bigg(\BTt_f\BTt_g-\BTt_{fg}\bigg)\BKt_\xi,\BKt_\xi\ra\\
			=&\int_{\dd^2}(f(z)-f(w))g(w)\BKt_\xi(w)\BKt_w(z)\BKt_z(\xi)\intd\lambda_t(w)\intd\lambda_t(z)\\
			=&\int_{\dd}\bigg\{\int_{\dd}\big[(f(z)-f(w))g(w)\BKt_\xi(w)\BKt_z(\xi)\big]\BKt_w(z)\intd\lambda_t(z)\bigg\}\intd\lambda_t(w)\\
			\xlongequal{\eqref{eqn: formula D z d'}}&\int_{\dd}\bigg\{\int_{\dd}\BGt1(|\varphi_z(w)|^2)\frac{(1-|z|^2)(z-w)}{1-w\bz}\big[\partial f(z)g(w)\BKt_\xi(w)\BKt_z(\xi)\big]\BKt_w(z)\intd\lambda_t(z)\bigg\}\intd\lambda_t(w).
		\end{flalign*}
		In the above, we apply \eqref{eqn: formula D z d'} with $\phi=1$ and $v(z)=\big[(f(z)-f(w))g(w)\BKt_\xi(w)\BKt_z(\xi)\big]$.
		Here, by direct computation, we obtain
		\[
		\big(\BGt1\big)(s)=\frac{1}{(t+1)s}.
		\]
		Since $\xi\in\dd$ is fixed, and $f, g$ are $\mathscr{C}^1$ to the boundary, the term $\big[\partial f(z)g(w)\BKt_\xi(w)\BKt_z(\xi)\big]$ is bounded.
		By \eqref{eqn: Rudin-Forelli 3}, the two-fold integral in the above converge absolutely. Applying Fubini's Theorem, and then \eqref{eqn: formula D z d''} with
		\[
		\phi=\BGt1,\quad v(w)=\frac{(1-|z|^2)(z-w)}{1-w\bz}\big[\partial f(z)g(w)\BKt_\xi(w)\BKt_z(\xi)\big],
		\]
		the above integral is computed as follows,
		\begin{flalign*}
			&\int_{\dd}\bigg\{\int_{\dd}\BGt1(|\varphi_z(w)|^2)\frac{(1-|z|^2)(z-w)}{1-w\bz}\big[\partial f(z)g(w)\BKt_\xi(w)\BKt_z(\xi)\big]\BKt_w(z)\intd\lambda_t(w)\bigg\}\intd\lambda_t(z)\\
			\xlongequal{\eqref{eqn: formula D z d''}}&-\int_{\dd}\bigg\{\int_{\dd}\big(\BGt\big)^21(|\varphi_z(w)|^2)\frac{(1-|w|^2)\overline{(z-w)}}{1-w\bz}\cdot\frac{(1-|z|^2)(z-w)}{1-w\bz}\\
			&\qquad \qquad \cdot\big[\partial f(z)\bpartial g(w)\BKt_\xi(w)\BKt_z(\xi)\big]\BKt_w(z)\intd\lambda_t(w)\bigg\}\intd\lambda_t(z)\\
			=&-\int_{\dd}\bigg\{\int_{\dd}\big(\BGt\big)^21(|\varphi_z(w)|^2)(1-|\varphi_z(w)|^2)|\varphi_z(w)|^2(1-z\bw)^2\\
			&\qquad \qquad \cdot\big[\partial f(z)\bpartial g(w)\BKt_\xi(w)\BKt_z(\xi)\big]\BKt_w(z)\intd\lambda_t(w)\bigg\}\intd\lambda_t(z)\\
			=&-\int_{\dd^2}\psi_1(|\varphi_z(w)|^2)(1-z\bw)^2\big[\partial f(z)\bpartial g(w)\BKt_\xi(w)\BKt_z(\xi)\big]\BKt_w(z)\intd\lambda_t(w)\intd\lambda_t(z).
		\end{flalign*}
		Here we have the following bound of $\psi_1$,
		\begin{equation}
			\psi_1(s)=\bigg[\big(\BGt\big)^21(s)\bigg](1-s)s=\frac{\int_s^1r^{-1}(1-r)^t\intd r}{(t+1)(1-s)^t}\lesssim_ts^{-1/2}(1-s).
		\end{equation}
		Using \eqref{eqn: Rudin-Forelli 2} and \eqref{eqn: Rudin-Forelli 3}, we can show that the integrand above is absolutely integrable with the measure $\intd\lambda_t(\xi)\intd\lambda_t(w)\intd\lambda_t(z)$. Therefore by Lemma \ref{lem: trace is integral of Berezin} and Fubini's Theorem, we compute $\Tr\bigg(\BTt_f\BTt_g-\BTt_{fg}\bigg)$ as follows,
		\begin{flalign*}
			&\Tr\bigg(\BTt_f\BTt_g-\BTt_{fg}\bigg)\\
			=&\int_{\dd}\bigg\{-\int_{\dd^2}\psi_1(|\varphi_z(w)|^2)(1-z\bw)^2\big[\partial f(z)\bpartial g(w)\BKt_\xi(w)\BKt_z(\xi)\big]\BKt_w(z)\intd\lambda_t(w)\intd\lambda_t(z)\bigg\}\intd\lambda_t(\xi)\\
			=&-\int_{\dd^2}\psi_1(|\varphi_z(w)|^2)(1-z\bw)^2\partial f(z)\bpartial g(w)|\BKt_w(z)|^2\intd\lambda_t(w)\intd\lambda_t(z).
		\end{flalign*}
		To obtain \eqref{eqn: semi-commutator trace in dim 1}, we apply \eqref{eqn: formula D z d'} again, with $z, w$ reversed,
		\[
		\phi=\psi_1,\qquad v(w)=\big[(1-z\bw)^2\partial f(z)\bpartial g(w)\BKt_w(z)\big].
		\]
		We compute the above integral as follows.
		\begin{flalign*}
			&\Tr\bigg(\BTt_f\BTt_g-\BTt_{fg}\bigg)\\
			=&-\int_{\dd}\bigg\{\int_{\dd}\psi_1(|\varphi_z(w)|^2)\big[(1-z\bw)^2\partial f(z)\bpartial g(w)\BKt_w(z)\big]\BKt_z(w)\intd\lambda_t(w)\bigg\}\intd\lambda_t(z)\\
			\xlongequal{\eqref{eqn: formula D z d'}}&-\int_{\dd}\bigg\{(t+1)\BFt\psi_1(0)\cdot(1-|z|^2)^2\partial f(z)\bpartial g(z)\BKt_z(z) \\
			&\hspace{1cm}+\int_{\dd}\BGt\psi_1(|\varphi_z(w)|^2)\frac{(1-|w|^2)(w-z)}{1-z\bw}\\
			&\qquad \qquad \qquad \qquad \big[(1-z\bw)^2\partial f(z)\partial\bpartial g(w)\BKt_w(z)\big]\BKt_z(w)\intd\lambda_t(w)\bigg\}\intd\lambda_t(z)\\
			=&\frac{1}{2\pi i}\int_{\dd}\partial f\wedge\bpartial g\\
			&\quad -\int_{\dd^2}\psi_2(|\varphi_z(w)|^2)\big[(1-|w|^2)(w-z)(1-z\bw)\partial f(z)\partial\bpartial g(w)\BKt_w(z)\big]\BKt_z(w)\intd\lambda_t(w)\intd\lambda_t(z).
		\end{flalign*}
		Here we have the following expressions
		\begin{equation}
			\BFt\psi_1(0)=\frac{\int_0^1\int_s^1r^{-1}(1-r)^t\intd r\intd s}{t+1}=\frac{\int_0^1\int_0^rr^{-1}(1-r)^t\intd s\intd r}{t+1}=\frac{\int_0^1(1-r)^t\intd r}{t+1}=(t+1)^{-2},
		\end{equation}
		and
		\begin{equation}
			\psi_2(s)=\BGt\psi_1(s)=\frac{\int_s^1\psi_1(r)(1-r)^t\intd r}{s(1-s)^{t+1}}=\frac{\int_s^1\int_r^1x^{-1}(1-x)^t\intd x\intd r}{(t+1)s(1-s)^{t+1}}\lesssim_ts^{-1/2}(1-s).
		\end{equation}
		Again, applying Fubini's Theorem and \eqref{eqn: formula D z d''}, with $z, w$ reversed, $\phi=\psi_2$,  $v(z)=\big[(1-|w|^2)(w-z)(1-z\bw)\partial f(z)\partial\bpartial g(w)\BKt_w(z)\big]$, we compute the second integral in the above  $\Tr\bigg(\BTt_f\BTt_g-\BTt_{fg}\bigg)$ as follows.
		\begin{flalign*}
			&-\int_{\dd^2}\psi_2(|\varphi_z(w)|^2)\big[(1-|w|^2)(w-z)(1-z\bw)\partial f(z)\partial\bpartial g(w)\BKt_w(z)\big]\BKt_z(w)\intd\lambda_t(w)\intd\lambda_t(z)\\
			=&-\int_{\dd} \bigg\{\int_{\dd}\psi_2(|\varphi_z(w)|^2)\big[(1-|w|^2)(w-z)(1-z\bw)\partial f(z)\partial\bpartial g(w)\BKt_w(z)\big]\BKt_z(w)\intd\lambda_t(z)\bigg\}\intd\lambda_t(w)\\
			\xlongequal{\eqref{eqn: formula D z d''}}&\int_{\dd}\bigg\{\int_{\dd}\BGt\psi_2(|\varphi_z(w)|^2)\frac{(1-|z|^2)\overline{(w-z)}}{1-z\bw}\\
			&~~\cdot\big[(1-|w|^2)(w-z)(1-z\bw)\bpartial\partial f(z)\partial\bpartial g(w)\BKt_w(z)\big]\BKt_z(w)\intd\lambda_t(z)\bigg\}\intd\lambda_t(w)\\
			=&\frac{(t+1)^2}{16\pi^2}\int_{\dd^2}\BGt\psi_2(|\varphi_z(w)|^2)\frac{(1-|z|^2)^{t+1}(1-|w|^2)^{t+1}|w-z|^2}{|1-z\bw|^{4+2t}}\Delta f(z)\Delta g(w)|\intd m(z,w)\\
			=&\int_{\dd^2}\varrho_t(|\varphi_z(w)|^2)\Delta f(z)\Delta g(w)\intd m(z,w).
		\end{flalign*}
		Here, by Lemma \ref{lem: Mobius basics}, we have
		\begin{flalign}\label{eqn: rho_t}
			\varrho_t(s)=&\frac{(t+1)^2}{16\pi^2}\BGt\psi_2(s)(1-s)^{t+1}s
			=\frac{(t+1)^2}{16\pi^2}\int_s^1\psi_2(s_1)(1-s_1)^t\intd s_1\nonumber\\
			=&\frac{(t+1)}{16\pi^2}\int_s^1\int_{s_1}^1\int_{s_2}^1s_1^{-1}(1-s_1)^{-1}s_3^{-1}(1-s_3)^t\intd s_3\intd s_2\intd s_1.
		\end{flalign}
		Therefore we have reached the following identity
		\begin{equation}\label{eqn: temp 9}
			\Tr\bigg(\BTt_f\BTt_g-\BTt_{fg}\bigg)=\frac{1}{2\pi i}\int_{\dd}\partial f\wedge\bpartial g+\int_{\dd^2}\varrho_t(|\varphi_z(w)|^2)\Delta f(z)\Delta g(w)\intd m(z,w).
		\end{equation}

		Let us simplify the expression of $\varrho_t$.
		\begin{flalign*}
			&\int_s^1\int_{s_1}^1\int_{s_2}^1s_1^{-1}(1-s_1)^{-1}s_3^{-1}(1-s_3)^t\intd s_3\intd s_2\intd s_1\\
			=&\iiint_{\{(s_1,s_2,s_3): s<s_1<s_2<s_3<1\}}s_1^{-1}(1-s_1)^{-1}s_3^{-1}(1-s_3)^t\intd s_1\intd s_2\intd s_3\\
			=&\int_s^1\bigg\{\iint_{\{(s_1,s_2): s<s_1<s_2<s_3\}}s_1^{-1}(1-s_1)^{-1}\intd s_1\intd s_2 \bigg\}s_3^{-1}(1-s_3)^t\intd s_3\\
			=&\int_s^1F(s,s_3)s_3^{-1}(1-s_3)^t\intd s_3.
		\end{flalign*}
		Here the last equality is by Lemma \ref{lem: F(s,x) as integral}. This proves the equation for $\varrho_t$. It also follows from Lemma \ref{lem: F(s,x) as integral} that $F(s,x)$ is strictly positive on $(0,1)$. Therefore $\rho_t$ is strictly positive on $(0,1)$.

		It remains to prove \eqref{eqn: semi-commutator trace in dim 1 limit}. In other words, the second term of \eqref{eqn: temp 9} tends to zero as $t$ tends to infinity.
		Clearly the absolute value of the second term has the following bound,
		\begin{flalign*}
			\lesssim&\int_{\dd}\int_{\dd}\varrho_t(|\varphi_z(w)|^2)\intd m(w)\intd m(z)
			=\int_{\dd}\int_{\dd}\varrho_t(|\zeta|^2)\frac{(1-|z|^2)^2}{|1-\zeta\bz|^4}\intd m(\zeta)\intd m(z)\\
			\lesssim&\int_{\dd}\varrho_t(|\zeta|^2)\ln\frac{1}{1-|\zeta|^2}\intd m(\zeta)
			\lesssim\int_{\dd}\varrho_t(|\zeta|)(1-|\zeta|^2)^{-1/2}\intd m(\zeta)\\
			\approx&\int_0^1\varrho_t(s)(1-s)^{-1/2}\intd s.
		\end{flalign*}
		Plugging in the formula of $\varrho_t$ and applying the Fubini's theorem gives
		\begin{flalign*}
			&\int_0^1\varrho_t(s)(1-s)^{-1/2}\intd s\\
			=&\frac{t+1}{16\pi^2}\int_0^1\int_s^1(1-s)^{-1/2}F(s,x)x^{-1}(1-x)^t\intd x\intd s\\
			=&\frac{t+1}{16\pi^2}\int_0^1\bigg[\int_0^x(1-s)^{-1/2}F(s,x)\intd s\bigg]x^{-1}(1-x)^t\intd x.
		\end{flalign*}
		By Lemma \ref{lem: F(s,x) integral estimate},
		\[
		0<\int_0^x(1-s)^{-1/2}F(s,x)\intd s\lesssim x^2.
		\] 
		So
		\begin{flalign*}
			\int_0^1\varrho_t(s)(1-s)^{-1/2}\intd s=&\frac{t+1}{16\pi^2}\int_0^1\bigg[\int_0^x(1-s)^{-1/2}F(s,x)\intd s\bigg]x^{-1}(1-x)^t\intd x\\
			\lesssim&(t+1)\int_0^1x(1-x)^t\intd x\\
			=&(t+1)B(2,t+1)\\
			\approx& t^{-1}.
		\end{flalign*}
		Therefore the second term in \eqref{eqn: temp 9} vanishes as $t\to\infty$. This completes the proof of Theorem \ref{thm: semi-commutator trace in dim 1}.
	\end{proof}

	\section{Integration by Parts}\label{sec: integration by parts}
	In the remaining sections of the article, we aim to extend Theorem \ref{thm: semi-commutator trace in dim 1} to higher dimensions and prove Theorem \ref{thm: higher dimensions}. Reviewing the proof of Theorem \ref{thm: semi-commutator trace in dim 1}, we notice that there are two key ingredients:
	\begin{itemize}
		\item[(1)] the integral formulas in Lemma \ref{lem: formula D z};
		\item[(2)] the auxiliary operations $\BFt$, $\BGt$ that record the change in $\phi$ after each application of the formulas.
	\end{itemize}
	With the above tools, we obtain the trace formula in Theorem \ref{thm: semi-commutator trace in dim 1} by applying iterations of Lemma \ref{lem: formula D z}.
	
	The proof of Theorem \ref{thm: higher dimensions} relies on generalizing (1) and (2). The goal of this section is to establish Lemma \ref{lem: formula Bn}, which is an analogue of Lemma \ref{lem: formula D z} in higher dimensions. Applying iteration of Lemma \ref{lem: formula Bn} two times, we get Lemma \ref{lem: formula F induction}. In Appendix II, more general auxiliary operations $\BFt_m$, $\BGt_m$ are defined and some basic properties are established.
	 In Section \ref{sec: the higher dimensions}, we apply Lemma \ref{lem: formula F induction} to obtain a formula for the semi-commutator (see Lemma \ref{lem: semicom = PR}), and we apply Lemma \ref{lem: formula F induction} again with $z, w$ reversed, to get the trace formula in Theorem \ref{thm: higher dimensions}. 
	 
	 The proof of Lemma \ref{lem: formula Bn} relies on a Bochner-Martinelli type formula that we establish in Appendix I. Similar integral formulas as in Lemma \ref{lem: formula Bn} were discovered by Charpentier in \cite{Charpentier}. Such integral formulas were used in \cite{OF2000} to study Bergman-Besov spaces and in \cite{Co-Sa-Wi:corona} to study the corona problem on the multiplier algebra of the Drury-Arveson space. 
	
	Let us start with a few definitions.
	Recall that by Lemma \ref{lem: Mobius basics} (5), for $z\in\bn$,
	\[
	(1-|z|^2)P_z(w)+(1-|z|^2)^{1/2}Q_z(w)=(1-\la w,z\ra)(z-\varphi_z(w))=A_zw,
	\]
	where $A_z$ is an $n\times n$ matrix depending on $z$, and $w$ is treated as a column vector.
	
	\begin{defn}\label{defn: d numbers I function}
		For multi-indices $\alpha, \beta\in\ind$ and $z\in\bn$, define
		\[
		d_{\alpha,\beta}(z)=\int_{\sn}\big(A_z\zeta\big)^\alpha\overline{\big(A_z\zeta\big)}^\beta\frac{\intd\sigma(\zeta)}{\sigma_{2n-1}}.
		\]
		In particular, $d_{0,0}=1$, and
		\begin{equation}\label{eqn: d at (1,0...0)}
			d_{\alpha,\beta}(z)=\delta_{\alpha,\beta}(1-|z|^2)^{\alpha_1+|\alpha|}\frac{(n-1)!\alpha!}{(n-1+|\alpha|)!},\quad\text{if }z=(z_1,0,\ldots,0).
		\end{equation}
		For multi-indices $\alpha, \beta\in\ind$ and $\zeta\in\cn$, denote
		\[
		I^{\alpha,\beta}(\zeta)=\zeta^\alpha\bar{\zeta}^\beta.
		\]
	\end{defn}
	\begin{lem}\label{lem: formula Bn}
		Suppose $t>-1$, $\alpha, \beta\in\ind$. Suppose $\phi: (0,1)\to[0,\infty)$ is measurable and $v\in\mathscr{C}^1(\bn)$. Then the following hold.
		\begin{enumerate}
			\item If $|\alpha|\geq|\beta|$ and all integrals converge absolutely, then
			\begin{flalign}\label{eqn: formula Bn d''}
				&\int_{\bn}\phi(|\varphi_z(w)|^2)I^{\alpha,\beta}(z-w) v(w)\BKt_w(z)\intd\lambda_t(w)\\
				=&\begin{cases}
					\frac{d_{\alpha,\beta}(z)}{B(n,t+1)}\cdot \BFt_{n+|\beta|}\phi(0) v(z)-\sum_{j=1}^n\int_{\bn}\BGt_{|\beta|+n}\phi(|\varphi_z(w)|^2)I^{\alpha,\beta+e_j}(z-w)S_j(w)\BKt_w(z)\intd\lambda_t(w),&\\
					\hspace{10cm} v(z)\neq0, \BFt_{n+|\beta|}\phi(0)<\infty,&\\
					&\\
					-\sum_{j=1}^n\int_{\bn}\BGt_{|\beta|+n}\phi(|\varphi_z(w)|^2)I^{\alpha,\beta+e_j}(z-w)S_j(w)\BKt_w(z)\intd\lambda_t(w),&\\
					\hspace{10cm}v(z)=0, \BFt_{n+|\beta|}\phi(0)\leq\infty,&
				\end{cases}
				\nonumber
			\end{flalign}
			where
			\[
			S_j(w,z)=\frac{(1-|w|^2)\bpartial_{w_j}\big[(1-\la z,w\ra)^{|\beta|}v(w)\big]}{(1-\la w,z\ra)(1-\la z,w\ra)^{|\beta|}}.
			\]
			\item If $|\alpha|\leq|\beta|$ and all integrals converge absolutely, then
			\begin{flalign}\label{eqn: formula Bn d'}
				&\int_{\bn}\phi(|\varphi_z(w)|^2)I^{\alpha,\beta}(z-w)v(z)\BKt_w(z)\intd\lambda_t(z)\\
				=&\begin{cases}
					\frac{d_{\alpha,\beta}(w)}{B(n,t+1)}\cdot\BFt_{n+|\alpha|}\phi(0)v(w)+\sum_{i=1}^n\int_{\bn}\BGt_{|\alpha|+n}\phi(|\varphi_z(w)|^2)I^{\alpha+e_i,\beta}(z-w)\widetilde{S}_i(z)\BKt_w(z)\intd\lambda_t(z),&\\
					\hspace{10cm}v(w)\neq0, \BFt_{n+|\alpha|}\phi(0)<\infty,&\\
					&\\
					\sum_{i=1}^n\int_{\bn}\BGt_{|\alpha|+n}\phi(|\varphi_z(w)|^2)I^{\alpha+e_i,\beta}(z-w)\widetilde{S}_i(z)\BKt_w(z)\intd\lambda_t(z),&\\
					\hspace{10cm}v(w)=0, \BFt_{n+|\alpha|}\phi(0)\leq\infty,&
				\end{cases}
				\nonumber
			\end{flalign}
			where
			\[
			\widetilde{S}_i(z,w)=\frac{(1-|z|^2)\partial_{z_i}\big[(1-\la z,w\ra)^{|\alpha|}v(z)\big]}{(1-\la w,z\ra)(1-\la z,w\ra)^{|\alpha|}}.
			\]
		\end{enumerate}	
	\end{lem}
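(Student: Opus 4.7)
The plan is to mirror the two-step strategy of Lemma \ref{lem: formula D z}: first establish the identity at $z=0$ by a multivariable analogue of the Cauchy formula (the Bochner-Martinelli type identity promised in Appendix I), then transport to general $z\in\bn$ via a M\"obius change of variables $\xi=\varphi_z(w)$. Case (2) will then follow from case (1) by conjugating and swapping $z\leftrightarrow w$, using $|\varphi_z(w)|=|\varphi_w(z)|$ and $\overline{\BKt_z(w)}=\BKt_w(z)$, exactly as in Lemma \ref{lem: formula D z}.

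For the base case $z=0$, observe that $\varphi_0(w)=-w$, $\BKt_w(0)=1$, and $A_0=I$, so \eqref{eqn: formula Bn d''} reduces to an identity expressing $\int_\bn\phi(|w|^2)w^\alpha\bar w^\beta v(w)\,\intd\lambda_t(w)$ as a $v(0)$-boundary term plus a sum of $\bpartial_{w_j}v$ integrals. I would decompose $\intd\lambda_t$ in polar coordinates, apply the Bochner-Martinelli formula from Appendix I on each sphere $r\sn$, and collect. The boundary contribution arises from the sphere average of $\zeta^\alpha\bar\zeta^\beta$, producing $d_{\alpha,\beta}(0)$, paired with the radial integral $\int_0^1\phi(r^2) r^{2(n+|\beta|)-1}(1-r^2)^t\intd r$ that matches $\tfrac{1}{2}\BFt_{n+|\beta|}\phi(0)$ by the definition of $\BFt_m$ in Appendix II; the normalization $1/B(n,t+1)$ comes from $\intd\lambda_t$. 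The derivative terms arise from the $\bpartial$-primitive in Bochner-Martinelli, which shifts $\bar w^\beta$ to $\bar w^{\beta+e_j}$ and yields the radial weight $\BGt_{|\beta|+n}\phi(|w|^2)(1-|w|^2)$ via the defining relation $\BGt_m\phi = s^{-1}(1-s)^{-t-1}\BFt_m\phi$. The dichotomy $v(0)\neq0$ vs. $v(0)=0$ is handled by subtracting the constant $v(0)$ and using absolute convergence, just as in Lemma \ref{lem: formula D 0}.

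For general $z$, I substitute $w=\varphi_z(\xi)$, invoke $z-w=A_z\xi/(1-\la\xi,z\ra)$ from Lemma \ref{lem: Mobius basics}\,(5) and the kernel invariance \eqref{eqn: Mobius and kernel}, to rewrite the left-hand side of \eqref{eqn: formula Bn d''} as
\[
\int_\bn\phi(|\xi|^2)(A_z\xi)^\alpha\overline{(A_z\xi)}^\beta V(\xi)\,\intd\lambda_t(\xi),\qquad V(\xi)=\frac{v(\varphi_z(\xi))\BKt_z(\xi)}{(1-\la\xi,z\ra)^{|\alpha|}(1-\la z,\xi\ra)^{|\beta|}}.
\]
Because $A_z$ is a fixed linear map, the base-case identity applies with $(A_z\xi)^\alpha\overline{(A_z\xi)}^\beta$ in place of $\xi^\alpha\bar\xi^\beta$, replacing the sphere average $d_{\alpha,\beta}(0)$ by $d_{\alpha,\beta}(z)$ (this is precisely Definition \ref{defn: d numbers I function}). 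The boundary term becomes $d_{\alpha,\beta}(z)/B(n,t+1)\cdot\BFt_{n+|\beta|}\phi(0)\cdot V(0)=d_{\alpha,\beta}(z)/B(n,t+1)\cdot\BFt_{n+|\beta|}\phi(0)\cdot v(z)$ since $\varphi_z(0)=z$ and $\BKt_z(0)=1$. For the derivative terms, the $\bpartial_{\xi_j}V(\xi)$ produced by the base case expands via the chain rule; the anti-holomorphic factor $(1-\la z,\xi\ra)^{-|\beta|}$ groups with $v(\varphi_z(\xi))$ so that $(1-\la z,\xi\ra)^{-|\beta|}\bpartial_{\xi_j}\bigl[(1-\la z,\xi\ra)^{|\beta|}v\circ\varphi_z\bigr](\xi)$ translates, after pulling back to $w=\varphi_z(\xi)$ and using the complex Jacobian of $\varphi_z$ together with $1-|\xi|^2=(1-|w|^2)/|1-\la w,z\ra|^2\cdot|1-\la w,z\ra|^2/(1-|z|^2)$ from Lemma \ref{lem: Mobius basics}\,(2), into $\bpartial_{w_j}\bigl[(1-\la z,w\ra)^{|\beta|}v(w)\bigr]/(1-\la z,w\ra)^{|\beta|}$ weighted by $(1-|w|^2)/(1-\la w,z\ra)$. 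This is exactly the form of $S_j(w,z)$.

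The main obstacle I expect is the algebraic bookkeeping in the last step: verifying that the sphere average of $(A_z\xi)^\alpha\overline{(A_z\xi)}^\beta$ yields $d_{\alpha,\beta}(z)$ as defined, and that the chain-rule expansion of $\bpartial_{\xi_j}V$ collapses, after recombination with the $A_z$ factors and the holomorphic/anti-holomorphic splitting of $(1-\la\xi,z\ra)^{-|\alpha|}(1-\la z,\xi\ra)^{-|\beta|}$, into the single compact expression $S_j$. The twists $A_z$ and $(1-\la z,\cdot\ra)^{|\beta|}$ are engineered precisely to absorb the non-unitary part of the M\"obius transform. A secondary technical point is verifying absolute convergence of the intermediate integrals arising from the chain-rule expansion; this reduces to Rudin-Forelli type estimates in Lemma \ref{lem: Rudin Forelli generalizations}.
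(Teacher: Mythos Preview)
Your proposal is correct and follows essentially the same route as the paper: a base case at $z=0$ via the Bochner--Martinelli type identity of Appendix~I (this is the paper's Lemma~\ref{lem: formula Bn 0}, stated for linear combinations of monomials of fixed bidegree and yielding $\bar R v$ rather than the individual $\bpartial_j v$, which is equivalent since $\sum_j\bar\zeta^{\gamma+e_j}\bpartial_j=\bar\zeta^\gamma\bar R$), then the M\"obius substitution $w=\varphi_z(\zeta)$, and finally conjugation and swapping for part~(2). The one place where the paper adds structure beyond what you sketch is the chain-rule step you flag as the main obstacle: rather than tracking the full Jacobian of $\varphi_z$ against the $A_z$ factors, the paper isolates the coordinate-free identity $\la\bpartial_\zeta h(\zeta),\zeta\ra=-(1-\la z,\zeta\ra)^{-1}\la\bpartial(h\circ\varphi_z)(\varphi_z(\zeta)),z-\varphi_z(\zeta)\ra$ as a separate lemma (Lemma~\ref{lem: inner product partial}), proved by reducing to $z=(z_1,0,\ldots,0)$ via the unitary invariance of $\la\bpartial v,\lambda\ra$, which makes the collapse to $S_j$ transparent.
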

	
	With Lemma \ref{lem: formula Bn}, we show the following.
	\begin{lem}\label{lem: formula F induction}
		Suppose $k$ is a non-negative integer and $\Gamma\subset\ind\times\ind$ is a finite set of multi-indices with $|\alpha|=|\beta|=k$ for every $(\alpha,\beta)\in\Gamma$. Suppose for some $\epsilon>-1-t$, $\{F_{\alpha,\beta}\}_{(\alpha,\beta)\in\Gamma}\subset\mathscr{C}^{2}(\bn\times\bn)$ and
		\begin{equation}\label{eqn: F assumption 1}
			\bigg|\sum_{(\alpha,\beta)\in\Gamma}I^{\alpha,\beta}(z-w)F_{\alpha,\beta}(z,w)\bigg|\lesssim|\varphi_z(w)|^{2k}|1-\la z,w\ra|^{2k+\epsilon},
		\end{equation}
		\begin{equation}\label{eqn: F assumption 2}
			\bigg|\sum_{j=1}^n\sum_{(\alpha,\beta)\in\Gamma}I^{\alpha,\beta+e_j}(z-w)\bpartial_{w_j} F_{\alpha,\beta}(z,w)\bigg|\lesssim|\varphi_z(w)|^{2k+1}|1-\la z,w\ra|^{2k+\epsilon}.
		\end{equation}
		
		Then
		\begin{flalign}\label{eqn: formula F induction}
			&\int_{\bn^2}\Phi^{(t)}_{n,k}(|\varphi_z(w)|^2)\frac{\sum_{(\alpha,\beta)\in\Gamma}I^{\alpha,\beta}(z-w)F_{\alpha,\beta}(z,w)}{|1-\la z,w\ra|^{2k}}\BKt_w(z)\intd\lambda_t(w)\intd\lambda_t(z)\nonumber\\
			=&\frac{\BFt_{n+k}\Phi^{(t)}_{n,k}(0)}{B(n,t+1)}\int_{\bn}(1-|z|^2)^{-2k}\sum_{(\alpha,\beta)\in\Gamma}d_{\alpha,\beta}(z)F_{\alpha,\beta}(z,z)\intd\lambda_t(z)\\
			&-\int_{\bn^2}\Phi^{(t)}_{n,k+1}(|\varphi_z(w)|^2)\frac{\sum_{i,j=1}^n\sum_{(\alpha,\beta)\in\Gamma}I^{\alpha+e_i,\beta+e_j}(z-w)D_{i,j}F_{\alpha,\beta}(z,w)}{|1-\la z,w\ra|^{2(k+1)}}\BKt_w(z)\intd\lambda_t(z)\intd\lambda_t(w).\nonumber
		\end{flalign}
		Here $D_{i,j}$ denotes the operation
		\[
		D_{i,j}=(1-\la z,w\ra)^2\partial_{z_i}\bpartial_{w_j}.
		\]
	\end{lem}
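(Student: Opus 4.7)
The strategy is to apply Lemma \ref{lem: formula Bn} twice in succession: first part (1) to integrate by parts in the $w$-variable, then part (2) to integrate by parts in the $z$-variable. Since every $(\alpha,\beta)\in\Gamma$ satisfies $|\alpha|=|\beta|=k$, part (1) applies at the start, and after shifting $\beta\to\beta+e_j$ we have $|\alpha|<|\beta+e_j|$, so part (2) applies in the second step. The assumptions \eqref{eqn: F assumption 1}-\eqref{eqn: F assumption 2}, combined with the tangential estimate $|z-w|\lesssim|\varphi_z(w)|\,|1-\la z,w\ra|^{1/2}$ from Lemma \ref{lem: Mobius basics}(6), will be used together with the Rudin-Forelli estimates of Lemma \ref{lem: Rudin Forelli generalizations} (especially \eqref{eqn: Rudin-Forelli 3}) to justify absolute convergence and two applications of Fubini's theorem.

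\emph{Step 1.} For each $(\alpha,\beta)\in\Gamma$, apply \eqref{eqn: formula Bn d''} with $\phi=\Phi^{(t)}_{n,k}$ and $v(w)=F_{\alpha,\beta}(z,w)/|1-\la z,w\ra|^{2k}$. Writing $|1-\la z,w\ra|^{2k}=(1-\la z,w\ra)^k(1-\la w,z\ra)^k$, the factor $(1-\la z,w\ra)^{|\beta|}=(1-\la z,w\ra)^k$ in the definition of $S_j$ cancels one half of the denominator, and since $\la w,z\ra$ is holomorphic in $w$ we have $\bpartial_{w_j}(1-\la w,z\ra)^{-k}=0$. Hence $S_j$ simplifies to
\[
S_j(w,z)=\frac{(1-|w|^2)\,\bpartial_{w_j}F_{\alpha,\beta}(z,w)}{(1-\la w,z\ra)^{k+1}(1-\la z,w\ra)^{k}}.
\]
At the diagonal $v(z)=F_{\alpha,\beta}(z,z)/(1-|z|^2)^{2k}$, so summing the boundary contributions over $\Gamma$ reproduces exactly the first term on the right-hand side of \eqref{eqn: formula F induction}.

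\emph{Step 2.} Apply \eqref{eqn: formula Bn d'} in the $z$-variable to each remainder term, with $\alpha'=\alpha$, $\beta'=\beta+e_j$, and new $\phi'=\BGt_{n+k}\Phi^{(t)}_{n,k}$. The resulting boundary term carries $d_{\alpha,\beta+e_j}(w)$, which vanishes because $|\alpha|\ne|\beta+e_j|$: applying $\zeta\mapsto e^{i\theta}\zeta$ in the defining integral on $\sn$ multiplies the integrand by $e^{i\theta(|\alpha|-|\beta+e_j|)}$, forcing the average over $\theta$ to zero. The factor $(1-\la z,w\ra)^{|\alpha|}=(1-\la z,w\ra)^{k}$ in $\widetilde S_i$ again cancels one half of the denominator in $S_j$, and $\partial_{z_i}(1-\la w,z\ra)^{-(k+1)}=0$ leaves only
\[
\widetilde S_i(z,w)=\frac{(1-|z|^2)(1-|w|^2)\,\partial_{z_i}\bpartial_{w_j}F_{\alpha,\beta}(z,w)}{(1-\la w,z\ra)^{k+2}(1-\la z,w\ra)^{k}}.
\]

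\emph{Step 3.} Repackage the remaining integral. Multiplying numerator and denominator by $(1-\la z,w\ra)^2$ converts $\partial_{z_i}\bpartial_{w_j}F_{\alpha,\beta}$ into $D_{i,j}F_{\alpha,\beta}$ and produces $|1-\la z,w\ra|^{2(k+2)}$ downstairs; then the identity $(1-|z|^2)(1-|w|^2)=(1-|\varphi_z(w)|^2)|1-\la z,w\ra|^2$ from Lemma \ref{lem: Mobius basics}(2) absorbs one factor of $|1-\la z,w\ra|^2$ and yields the target denominator $|1-\la z,w\ra|^{2(k+1)}$ together with a factor $(1-|\varphi_z(w)|^2)$ attached to $\BGt_{n+k}^2\Phi^{(t)}_{n,k}(|\varphi_z(w)|^2)$. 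The recurrence $\Phi^{(t)}_{n,k+1}(s)=(1-s)\,\BGt_{n+k}^2\Phi^{(t)}_{n,k}(s)$ (which will be established in Appendix II as part of the definition of the auxiliary $\Phi^{(t)}_{n,k}$) then delivers precisely the second term of \eqref{eqn: formula F induction}.

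The main obstacle is the convergence bookkeeping: one must check that the integrands at each stage are absolutely integrable on $\bn\times\bn$ against $d\lambda_t\otimes d\lambda_t$. The first-stage integrand is controlled directly by \eqref{eqn: F assumption 1} and \eqref{eqn: Rudin-Forelli 3}; after applying part (1), the new integrand features $I^{\alpha,\beta+e_j}(z-w)\bpartial_{w_j}F_{\alpha,\beta}$, which is precisely what \eqref{eqn: F assumption 2} bounds, while the growth of $\BGt_{n+k}\Phi^{(t)}_{n,k}$ and $\BGt_{n+k}^2\Phi^{(t)}_{n,k}$ at the endpoints of $(0,1)$ (an Appendix II fact of the form $\lesssim s^{-a}(1-s)^{-b}$) again feeds into \eqref{eqn: Rudin-Forelli 3}. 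Granting these estimates, Fubini applies and the two-step integration by parts produces \eqref{eqn: formula F induction} exactly.
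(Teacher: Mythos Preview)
Your proposal is correct and follows essentially the same route as the paper's own proof: apply \eqref{eqn: formula Bn d''} in $w$ with $\phi=\Phi^{(t)}_{n,k}$ and $v(w)=F_{\alpha,\beta}(z,w)/|1-\la z,w\ra|^{2k}$, then \eqref{eqn: formula Bn d'} in $z$ with $\phi=\BGt_{n+k}\Phi^{(t)}_{n,k}$, and finally repackage via $\Phi^{(t)}_{n,k+1}=M_{\phi_1}(\BGt_{n+k})^2\Phi^{(t)}_{n,k}$ and Lemma \ref{lem: Mobius basics}(2). Your explicit justification that $d_{\alpha,\beta+e_j}(w)=0$ by the rotation $\zeta\mapsto e^{i\theta}\zeta$ is something the paper leaves implicit, and your simplification of $S_j$ and $\widetilde S_i$ matches the paper's computation line for line; the convergence checks you sketch are exactly the ones the paper carries out using \eqref{eqn: Phi behavior}, \eqref{eqn: GPhi behavior}, and \eqref{eqn: Rudin-Forelli 3}.
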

	Lemma \ref{lem: formula F induction} will be a key ingredient in the proof of Theorem \ref{thm: higher dimensions}. For the rest of this section, we prove the two lemmas. As in Section \ref{sec: trace formulas on the disk}, we start by proving a version of Lemma \ref{lem: formula Bn} at the point $0$.
	\begin{lem}\label{lem: formula Bn 0}
		Suppose $t>-1$, $k, l$ are non-negative integers with $k\geq l$, and $\Gamma\subset\ind\times\ind$ is a finite set of multi-indices with $|\kappa|=k, |\gamma|=l$ for every $(\kappa,\gamma)\in\Gamma$. Suppose $\{c_{\kappa,\gamma}\}_{(\kappa,\gamma)\in\Gamma}\subset\CC$, $\phi:(0,1)\to[0,\infty)$ is measurable and $v\in\mathscr{C}^1(\bn)$ satisfies that both
		\[
		\int_{\bn}\phi(|\zeta|^2)\bigg[\sum_{(\kappa,\gamma)\in\Gamma}c_{\kappa,\gamma}\zeta^\kappa\bar{\zeta}^\gamma\bigg]v(\zeta)\intd\lambda_t(\zeta)
		\]
		and
		\[
		\int_{\bn}\BGt_{l+n}\phi(|\zeta|^2)(1-|\zeta|^2)\bigg[\sum_{(\kappa,\gamma)\in\Gamma}c_{\kappa,\gamma} \zeta^\kappa\bar{\zeta}^\gamma\bigg]\bar{R}v(\zeta)\intd\lambda_t(\zeta)
		\]
		converge absolutely. Then
		\begin{flalign}\label{eqn: formula Bn 0}
			&\int_{\bn}\phi(|\zeta|^2)\bigg[\sum_{(\kappa,\gamma)\in\Gamma}c_{\kappa,\gamma}\zeta^\kappa\bar{\zeta}^\gamma\bigg]v(\zeta)\intd\lambda_t(\zeta)\\
			=&\begin{cases}
				cv(0)+\int_{\bn}\BGt_{l+n}\phi(|\zeta|^2)(1-|\zeta|^2)\bigg[\sum_{(\kappa,\gamma)\in\Gamma}c_{\kappa,\gamma} \zeta^\kappa\bar{\zeta}^\gamma\bigg]\bar{R}v(\zeta)\intd\lambda_t(\zeta),&v(0)\neq0, \BFt_{n+l}\phi(0)<\infty.\\
				\int_{\bn}\BGt_{l+n}\phi(|\zeta|^2)(1-|\zeta|^2)\bigg[\sum_{(\kappa,\gamma)\in\Gamma}c_{\kappa,\gamma} \zeta^\kappa\bar{\zeta}^\gamma\bigg]\bar{R}v(\zeta)\intd\lambda_t(\zeta),&v(0)=0, \BFt_{n+l}\phi(0)\leq\infty.
			\end{cases}\nonumber
		\end{flalign}
		Here
		\[
		c=\int_{\bn}\phi(|\zeta|^2)\bigg[\sum_{(\kappa,\gamma)\in\Gamma}c_{\kappa,\gamma}\zeta^\kappa\bar{\zeta}^\gamma\bigg]\intd\lambda_t(\zeta).
		\]
	\end{lem}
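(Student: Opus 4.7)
The plan is to derive \eqref{eqn: formula Bn 0} from an integration-by-parts identity on $\bn$ that converts $\bar R v$ into $v$ via the formal adjoint of $\bar R=\sum_i\bar\zeta_i\bpartial_i$ with respect to Lebesgue measure. First I would reduce to the case $v(0)=0$: writing $v=v(0)+(v-v(0))$, the constant piece contributes exactly $v(0)\cdot c$ to the left-hand side by the definition of $c$, and nothing to the right since $\bar R$ annihilates constants. The hypothesis $\BFt_{n+l}\phi(0)<\infty$ in the first branch is precisely what ensures $c$ is finite in the only case where it is nonzero (by the unitary invariance of $\intd\lambda_t$, $c\neq 0$ forces $k=l$).

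With $v(0)=0$ and $s=|\zeta|^2$, I would introduce
\[
f(\zeta)=\BGt_{n+l}\phi(s)(1-s)^{t+1}P(\zeta,\bar\zeta),\qquad P(\zeta,\bar\zeta)=\sum_{(\kappa,\gamma)\in\Gamma}c_{\kappa,\gamma}\zeta^\kappa\bar\zeta^\gamma,
\]
and apply Stokes' theorem to each $\bpartial_i$ separately. Using that the complex outward normal components on $\sn$ are $\nu^{\bpartial_i}=\zeta_i/2$ and that $\sum_i\zeta_i\bar\zeta_i=1$ there, this yields
\[
\int_\bn(\bar Rv)f\intd m=-\int_\bn v\,(n+\bar R)f\intd m+\tfrac{1}{2}\int_\sn vf\intd\sigma,
\]
and the sphere integral vanishes because $(1-|\zeta|^2)^{t+1}=0$ on $\sn$ (here $t+1>0$).

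It remains to compute $(n+\bar R)f$. Bi-homogeneity of $P$ of bidegree $(k,l)$ gives $\bar RP=lP$, and $\bar R g(|\zeta|^2)=s\,g'(s)$, hence
\[
(n+\bar R)f=P(\zeta,\bar\zeta)(1-s)^t\Bigl[s(1-s)(\BGt_{n+l}\phi)'(s)+\bigl((n+l)(1-s)-(t+1)s\bigr)\BGt_{n+l}\phi(s)\Bigr].
\]
Differentiating the defining identity $s^{n+l}(1-s)^{t+1}\BGt_{n+l}\phi(s)=\BFt_{n+l}\phi(s)=\int_s^1 r^{n+l-1}(1-r)^t\phi(r)\intd r$ from Appendix~II shows the bracket collapses to $-\phi(s)$. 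Absorbing the normalizing constant $(n-1)!/(\pi^nB(n,t+1))$ to pass from $\intd m$ to $\intd\lambda_t$ then yields \eqref{eqn: formula Bn 0} in the $v(0)=0$ case, whence the general case by Step~1.

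The main obstacle is justifying Stokes' theorem since $v\in\mathscr{C}^1(\bn)$ need not extend smoothly to $\overline{\bn}$. I would apply the identity on $r\bn$ for $r<1$ and pass to $r\uparrow1$; the boundary piece on $r\sn$ is controlled by $(1-r^2)^{t+1}\BGt_{n+l}\phi(r^2)=r^{-2(n+l)}\BFt_{n+l}\phi(r^2)$ times bounded polynomial factors in $\eta$, and vanishes in the limit thanks to the absolute-convergence hypotheses on the two integrals in the statement, which also supply the dominating functions needed to pass the interior integrals to the limit.
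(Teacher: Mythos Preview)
Your argument is correct and takes a genuinely different route from the paper. The paper does not integrate by parts globally on $\bn$; instead it passes to polar coordinates, applies on each sphere $r\sn$ the Bochner--Martinelli type identity of Appendix~I (Lemma~\ref{lem: formula rSn 0}),
\[
\int_{r\sn}\zeta^\kappa\bar\zeta^\gamma v\,\intd\sigma_r \;=\; 2r^{2l+2n-1}\int_{r\bn}\frac{\zeta^\kappa\bar\zeta^\gamma}{|\zeta|^{2l+2n}}\,\bar Rv\,\intd m,
\]
and then invokes Fubini (justified precisely by the absolute convergence of the second integral) to obtain the result. Your route replaces the Bochner--Martinelli machinery by the elementary ODE computation: differentiating $s^{n+l}(1-s)^{t+1}\BGt_{n+l}\phi(s)=\BFt_{n+l}\phi(s)$ gives exactly $(n+\bar R)f=-P\,\phi(s)(1-s)^t$, which is neat and avoids Appendix~I altogether.

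What the paper's route buys is robustness against the two soft spots in your justification. First, since $\phi$ is only measurable, $h(s)=\BGt_{n+l}\phi(s)(1-s)^{t+1}$ is merely absolutely continuous, so applying Stokes to $f=hP$ needs an approximation step you have not spelled out. Second, the boundary term on $r\sn$ is $\tfrac{r}{2}h(r^2)\int_{r\sn}vP\,\intd\sigma_r$, and $v$ appears there --- not just ``bounded polynomial factors in $\eta$''. Showing this vanishes as $r\uparrow1$ from the two absolute-convergence hypotheses alone is more delicate than you indicate (one needs, for instance, that $\BFt_{n+l}\phi(s)\to0$ as $s\to1$, which follows once $\BGt_{n+l}\phi$ is finite on a set of positive measure, together with control of the spherical integral of $v$). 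The paper's sphere-by-sphere identity followed by Fubini bypasses both issues: no differentiation of $\phi$ is ever needed, and no limit at $r=1$ is taken.
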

	
	\begin{proof}
		Assume first that $v(0)=0$.
		As in Appendix II we use $\phi_t$ to stand for the function $(1-s)^t$.
		By assumption on $v$, the first line of \eqref{eqn: formula Bn 0} converges absolutely. Therefore
		\begin{flalign*}
			&\int_{\bn}\phi(|\zeta|^2)\bigg[\sum_{(\kappa,\gamma)\in\Gamma}c_{\kappa,\gamma}\zeta^\kappa\bar{\zeta}^\gamma\bigg]v(\zeta)\intd\lambda_t(\zeta)\\
			=&\frac{(n-1)!}{\pi^nB(n,t+1)}\int_{\bn}\phi\phi_t(|\zeta|^2)\bigg[\sum_{(\kappa,\gamma)\in\Gamma}c_{\kappa,\gamma}\zeta^\kappa\bar{\zeta}^\gamma\bigg]v(\zeta)\intd m(\zeta)\\
			=&\frac{(n-1)!}{\pi^nB(n,t+1)}\int_0^1\phi\phi_t(r^2)\bigg[\sum_{(\kappa,\gamma)\in\Gamma}c_{\kappa,\gamma}\int_{r\sn}\zeta^\kappa\bar{\zeta}^\gamma v(\zeta)\intd\sigma_r(\zeta)\bigg]\intd r.
		\end{flalign*}

		Define $R=\sum_{i=1}^nz_i\partial_{z_i}$ be the radial derivative operator, and $\bar{R}=\sum_{i=1}^n\bar{z}_i\bpartial_{z_i}$. In Appendix I, Lemma \ref{lem: formula rSn 0}, we show that
		\begin{equation*}
			\int_{r\sn}\zeta^\kappa\bar{\zeta}^\gamma v(\zeta)\intd\sigma_r(\zeta)=2r^{2l+2n-1}\int_{r\bn}\frac{\zeta^\kappa\bar{\zeta}^\gamma}{|\zeta|^{2l+2n}}\bar{R}v(\zeta)\intd m(\zeta).
		\end{equation*}
		Plugging it back gives
		\begin{flalign*}
			&\int_{\bn}\phi(|\zeta|^2)\bigg[\sum_{(\kappa,\gamma)\in\Gamma}c_{\kappa,\gamma}\zeta^\kappa\bar{\zeta}^\gamma\bigg]v(\zeta)\intd\lambda_t(\zeta)\\
			=&\frac{(n-1)!}{\pi^nB(n,t+1)}\int_0^1\phi\phi_t(r^2)\bigg[\sum_{(\kappa,\gamma)\in\Gamma}c_{\kappa,\gamma}\cdot 2r^{2l+2n-1}\int_{r\bn}\frac{\zeta^\kappa\bar{\zeta}^\gamma}{|\zeta|^{2|\gamma|+2n}}\bar{R}v(\zeta)\intd m(\zeta)\bigg]\intd r\\
			=&\frac{(n-1)!}{\pi^nB(n,t+1)}\int_0^1\int_{r\bn}\phi\phi_t(r^2)2r^{2l+2n-1}\bigg[\sum_{(\kappa,\gamma)\in\Gamma}c_{\kappa,\gamma}\cdot \zeta^\kappa\bar{\zeta}^\gamma\bigg]|\zeta|^{-2l-2n}\bar{R}v(\zeta)\intd m(\zeta)\intd r\\
			=&\frac{(n-1)!}{\pi^nB(n,t+1)}\int_{\bn}\bigg[|\zeta|^{-2l-2n}\int_{|\zeta|}^1\phi\phi_t(r^2)2r^{2l+2n-1}\intd r\bigg]\bigg[\sum_{(\kappa,\gamma)\in\Gamma}c_{\kappa,\gamma} \zeta^\kappa\bar{\zeta}^\gamma\bigg]\bar{R}v(\zeta)\intd m(\zeta)\\
			=&\int_{\bn}\BGt_{l+n}\phi(|\zeta|^2)(1-|\zeta|^2)\bigg[\sum_{(\kappa,\gamma)\in\Gamma}c_{\kappa,\gamma} \zeta^\kappa\bar{\zeta}^\gamma\bigg]\bar{R}v(\zeta)\intd\lambda_t(\zeta).
		\end{flalign*}
		Here in the last equality we used that by Definition \ref{defn: BFt BGt},
		\[
		|\zeta|^{-2l-2n}\int_{|\zeta|}^1\phi\phi_t(r^2)2r^{2l+2n-1}\intd r\xlongequal{s=r^2}|\zeta|^{-2l-2n}\int_{|\zeta|^2}^1\phi\phi_t(s)s^{l+n-1}\intd s=(1-|\zeta|^2)^{t+1}\BGt_{l+n}\phi(|\zeta|^2).
		\]
		In the second to last equality, Fubini's theorem is applied: since by assumption the last integral converge absolutely, the condition for Fubini's theorem is satisfied. This proves the second case.
		
		Assuming that $v(0)\neq0$ and $\BFt_{l+n}\phi(0)<\infty$, then we have
		\begin{flalign*}
			&\int_{\bn}\bigg|\phi(|\zeta|^2)\bigg[\sum_{(\kappa,\gamma)\in\Gamma}c_{\kappa,\gamma}\zeta^\kappa\bar{\zeta}^\gamma\bigg]\big(v(\zeta)-v(0)\big)\bigg|\intd\lambda_t(\zeta)\\
			\leq&\int_{\bn}\bigg|\phi(|\zeta|^2)\bigg[\sum_{(\kappa,\gamma)\in\Gamma}c_{\kappa,\gamma}\zeta^\kappa\bar{\zeta}^\gamma\bigg]v(\zeta)\bigg|\intd\lambda_t(\zeta)+|v(0)|\int_{\bn}\bigg|\phi(|\zeta|^2)\bigg[\sum_{(\kappa,\gamma)\in\Gamma}c_{\kappa,\gamma}\zeta^\kappa\bar{\zeta}^\gamma\bigg]\bigg|\intd\lambda_t(\zeta).
		\end{flalign*}
		By assumption, the first integral converges. Also since $|\kappa|=k\geq l=|\gamma|$,
		\begin{flalign*}
			&\int_{\bn}\bigg|\phi(|\zeta|^2)\bigg[\sum_{(\kappa,\gamma)\in\Gamma}c_{\kappa,\gamma}\zeta^\kappa\bar{\zeta}^\gamma\bigg]\bigg|\intd\lambda_t(\zeta)\\
			\lesssim&\int_{\bn}\phi(|\zeta|^2)|\zeta|^{2l}\intd\lambda_t(\zeta)\\
			=&\frac{(n-1)!\sigma_{2n-1}}{\pi^nB(n,t+1)}\int_0^1\phi(r^2)r^{2n+2l-1}(1-r^2)^t\intd r\\
			\xlongequal{s=r^2}&\frac{(n-1)!\sigma_{2n-1}}{2\pi^nB(n,t+1)}\int_0^1\phi(s)s^{n+l-1}(1-s)^t\intd s\\
			=&\frac{(n-1)!\sigma_{2n-1}}{2\pi^nB(n,t+1)}\BFt_{n+l}\phi(0)\\
			<&\infty.
		\end{flalign*}
		Therefore we may apply the formula with $v(\zeta)-v(0)$ replacing $v(\zeta)$. This gives the first case.
	\end{proof}
	
	Suppose $\{e_1,\ldots,e_n\}$ and $\{f_1,\ldots,f_n\}$ are two orthonormal basis of $\cn$. Suppose
	\[
	\sum_{i=1}^n\zeta_ie_i=\sum_{i=1}^n\xi_if_i;\quad\sum_{i=1}^n\lambda_ie_i=\sum_{i=1}^n\omega_if_i.
	\]
	Then there is a unitary matrix $U=[u_{ij}]$ such that
	\[
	\xi=U\zeta,\quad \omega=U\lambda.
	\]
	Denote $U^*=[u_{ij}^*]$.
	Therefore
	\begin{equation}\label{eqn: unitary change}
		\sum_{i=1}^n\frac{\partial v}{\partial\bar{\zeta_i}}\bar{\lambda}_i=\sum_{i,j,k=1}^n\bigg(\frac{\partial v}{\partial\overline{\xi}_j}\bar{u}_{ji}\bar{\zeta}_i\bigg)\overline{\big(u^*_{ik}\omega_k\big)}=\sum_{j,k=1}^n\frac{\partial v}{\partial\bar{\xi}_j}\delta_{jk}\bar{w}_k=\sum_{j=1}^n\frac{\partial v}{\partial\bar{\xi}_j}\bar{w}_j.
	\end{equation}
	In other words, the function
	\[
	\sum_{i=1}^n\bpartial_iv\bar{\lambda}_i=\la\bpartial v,\lambda\ra
	\]
	does not depend on the choice of a basis.
	\begin{lem}\label{lem: inner product partial}
		Suppose $v\in\mathscr{C}^1(\bn)$ and $z\in\bn$. Then
		\begin{equation}\label{eqn: inner product partial}
			\la\bpartial_\zeta v(\zeta),\zeta\ra=-\frac{\la\bpartial \big(v\circ\varphi_z\big)(\varphi_z(\zeta)),z-\varphi_z(\zeta)\ra}{1-\la z,\zeta\ra}.
		\end{equation}
	\end{lem}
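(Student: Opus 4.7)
The plan is to reduce the identity to a direct computation of the holomorphic radial derivative of the M\"obius transform, using the chain rule together with the explicit form of $\varphi_z$ in Lemma \ref{lem: Mobius basics}(5).

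Since $\varphi_z\circ\varphi_z=\mathrm{Id}$, upon setting $u=v\circ\varphi_z$ and $w=\varphi_z(\zeta)$ I have $v=u\circ\varphi_z$. Because $\varphi_z$ is holomorphic, the chain rule for the antiholomorphic derivative of a composition with a holomorphic map gives
\[
\bpartial_j v(\zeta)=\sum_{k=1}^n\bpartial_k u(w)\,\overline{\partial_{\zeta_j}(\varphi_z)_k(\zeta)}.
\]
Pairing with $\zeta$ and swapping sums yields
\[
\la\bpartial_\zeta v(\zeta),\zeta\ra=\sum_{k=1}^n\bpartial_k u(w)\,\overline{R(\varphi_z)_k(\zeta)},
\]
where $R=\sum_j\zeta_j\partial_{\zeta_j}$ is the holomorphic radial derivative. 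The target identity \eqref{eqn: inner product partial} is therefore equivalent to the vector identity
\[
R\varphi_z(\zeta)=-\frac{z-\varphi_z(\zeta)}{1-\la\zeta,z\ra},
\]
the conjugate of the denominator producing $1-\la z,\zeta\ra$ in the final expression.

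The remaining step is to differentiate the expression of Lemma \ref{lem: Mobius basics}(5), namely $z-\varphi_z(\zeta)=A_z\zeta/(1-\la\zeta,z\ra)$, noting that the matrix $A_z$ is constant in $\zeta$. Euler's identity gives $R(A_z\zeta)=A_z\zeta$, and a direct calculation yields $R(1-\la\zeta,z\ra)^{-1}=\la\zeta,z\ra/(1-\la\zeta,z\ra)^2$. The product rule then collapses the two contributions to
\[
R\!\left[\frac{A_z\zeta}{1-\la\zeta,z\ra}\right]=\frac{A_z\zeta(1-\la\zeta,z\ra)+A_z\zeta\,\la\zeta,z\ra}{(1-\la\zeta,z\ra)^2}=\frac{A_z\zeta}{(1-\la\zeta,z\ra)^2}=\frac{z-\varphi_z(\zeta)}{1-\la\zeta,z\ra},
\]
and negating yields the desired formula for $R\varphi_z(\zeta)$.

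There is no serious obstacle beyond careful bookkeeping. The only points requiring attention are: applying the antiholomorphic chain rule so that the \emph{conjugate} of the holomorphic Jacobian of $\varphi_z$ appears (and no spurious $\partial_{\bar\zeta}\varphi_z$ term arises, since $\varphi_z$ is holomorphic), and tracking the conjugation $\overline{1-\la\zeta,z\ra}=1-\la z,\zeta\ra$ in the last line so that the scalar denominator matches the one stated in the lemma.
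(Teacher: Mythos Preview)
Your proof is correct but takes a different route from the paper. The paper exploits the unitary invariance of both sides (established in \eqref{eqn: unitary change}) to reduce to the special case $z=(r,0,\ldots,0)$, then writes out the $n\times n$ Jacobian matrix of $\varphi_z$ explicitly in those coordinates and computes the vector $\xi$ with $\xi_i=\sum_j\frac{\partial(\varphi_z)_i}{\partial w_j}(z_j-w_j)$ entry by entry, verifying that $\xi=-\frac{1-|z|^2}{1-\la w,z\ra}\varphi_z(w)$ and then substituting $w=\varphi_z(\zeta)$. You instead work coordinate-free: you recast the identity as the vector equation $R\varphi_z(\zeta)=-\frac{z-\varphi_z(\zeta)}{1-\la\zeta,z\ra}$ and prove it directly from the formula $z-\varphi_z(\zeta)=A_z\zeta/(1-\la\zeta,z\ra)$ of Lemma~\ref{lem: Mobius basics}(5), using only Euler's identity for the linear map $A_z$ and the product rule. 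Your argument is shorter and bypasses both the explicit Jacobian computation and the basis-independence reduction; the paper's approach, on the other hand, makes the structure of the Jacobian of $\varphi_z$ fully explicit, which can be useful in other calculations.
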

	
	\begin{proof}
		By \eqref{eqn: unitary change}, both sides of \eqref{eqn: inner product partial} do not depend on the choice of basis. Thus we may assume $z=(r,0,\ldots,0)$. In this case, we have the following expression
		\[
		\varphi_z(w)=\frac{1}{1-w_1r}\big(r-w_1,~-(1-r^2)^{1/2}w_2,~\ldots,~-(1-r^2)^{1/2}w_n\big).
		\]
		Consequently, we compute the Jacobian
		\[
		\bigg[\frac{\partial(\varphi_z)_i}{\partial w_j}\bigg]=\begin{bmatrix}
			-\frac{1-r^2}{(1-w_1r)^2}&0&0&0&\ldots&0\\
			-\frac{(1-r^2)^{1/2}w_2r}{(1-w_1r)^2}&-\frac{(1-r^2)^{1/2}}{1-w_1r}&0&0&\ldots&0\\
			-\frac{(1-r^2)^{1/2}w_3r}{(1-w_1r)^2}&0&-\frac{(1-r^2)^{1/2}}{1-w_1r}&0&\ldots&0\\
			\vdots&\vdots&\vdots&\vdots&\ddots&\vdots\\
			-\frac{(1-r^2)^{1/2}w_nr}{(1-w_1r)^2}&0&0&0&\ldots&-\frac{(1-r^2)^{1/2}}{1-w_1r}
		\end{bmatrix}.
		\]
		Therefore we have the following expression,
		\begin{equation}\label{eqn: 8}
			\la\bpartial\big(v\circ\varphi_z\big)(w),z-w\ra=\sum_{i,j=1}^n\bpartial_iv\big(\varphi_z(w)\big)\overline{\bigg(\frac{\partial\big(\varphi_z\big)_i}{\partial w_j}\bigg)}\overline{(z_j-w_j)}=\la\bpartial v\big(\varphi_z(w)\big),\xi\ra,
		\end{equation}
		where
		\[
		\xi_i=\sum_{j=1}^n\frac{\partial\big(\varphi_z\big)_i}{\partial w_j}(z_j-w_j)=\frac{\partial\big(\varphi_z\big)_i}{\partial w_1}(r-w_1)-\sum_{j=2}^n\frac{\partial\big(\varphi_z\big)_i}{\partial w_j}w_j.
		\]
		By the above, set
		\[
		\xi_1=-\frac{(1-r^2)(r-w_1)}{(1-w_1r)^2},
		\]
		and for $i=2,\ldots,n$,
		\[
		\xi_i=-\frac{(1-r^2)^{1/2}w_ir(r-w_1)}{(1-w_1r)^2}+\frac{(1-r^2)^{1/2}w_i}{1-w_1r}=\frac{(1-r^2)^{1/2}}{(1-w_1r)^2}\big(w_i-w_ir^2\big)=\frac{(1-r^2)^{3/2}}{(1-w_1r)^2}w_i.
		\]
		Thus we have
		\begin{equation}\label{eqn: 9}
			\xi=-\frac{1-r^2}{1-w_1r}\varphi_z(w).
		\end{equation}
		If we plug in $w=\varphi_z(\zeta)$ then by \eqref{eqn: 8}, \eqref{eqn: 9} and Lemma \ref{lem: Mobius basics}, we obtain the equalities
		\[
		\la\bpartial\big(v\circ\varphi_z\big)(\varphi_z(\zeta)),z-\varphi_z(\zeta)\ra=\la\bpartial v(\zeta),-\frac{1-|z|^2}{1-\la\varphi_z(\zeta),z\ra}\zeta\ra=-(1-\la z,\zeta\ra)\la\bpartial v(\zeta),\zeta\ra.
		\]
		Equivalently, \eqref{eqn: inner product partial} holds. This completes the proof of Lemma \ref{lem: inner product partial}.
	\end{proof}

	\begin{proof}[{\bf Proof of Lemma \ref{lem: formula Bn}}]
		By Lemma \ref{lem: Mobius basics} (4) (5),
		\begin{equation*}
			\BKt_w(z)\intd\lambda_t(w)\xlongequal[\zeta=\varphi_z(w)]{w=\varphi_z(\zeta)}\BKt_z(\zeta)\intd\lambda_t(\zeta),
		\end{equation*}
		and
		\begin{equation}\label{eqn: pf formula bn 1}
			z-\varphi_z(\zeta)=\frac{A_z\zeta}{1-\la \zeta,z\ra},
		\end{equation}
		where $A_zw$ is the linear transformation
		\[
		A_z\zeta=(1-|z|^2)P_z(\zeta)+(1-|z|^2)^{1/2}Q_z(\zeta).
		\]
		Write $|\alpha|=k, |\beta|=l$. Assume that $v(z)\neq 0$ and $\BFt_{n+l}\phi(0)<\infty$.
		\begin{flalign*}
			&\int_{\bn}\phi(|\varphi_z(w)|^2)I^{\alpha,\beta}(z-w)v(w)\BKt_w(z)\intd\lambda_t(w)\\
			\xlongequal{w=\varphi_z(\zeta)}&\int_{\bn}\phi(|\zeta|^2)I^{\alpha,\beta}(z-\varphi_z(\zeta))v\circ\varphi_z(\zeta)\BKt_z(\zeta)\intd\lambda_t(\zeta).
		\end{flalign*}
		By \eqref{eqn: pf formula bn 1},
		\begin{equation}\label{eqn: pf formula bn 2}
			I^{\alpha,\beta}(z-\varphi_z(\zeta))=\frac{\sum_{|\kappa|=k,|\gamma|=l}c_{\alpha,\beta,\kappa,\gamma,z}\zeta^\kappa\bar{\zeta}^\gamma}{(1-\la\zeta,z\ra)^k(1-\la z,\zeta\ra)^l}.
		\end{equation}
		By the above and Lemma \ref{lem: formula Bn 0},
		\begin{flalign*}
			&\int_{\bn}\phi(|\varphi_z(w)|^2)I^{\alpha,\beta}(z-w)v(w)\BKt_w(z)\intd\lambda_t(w)\\
			=&\int_{\bn}\phi(|\zeta|^2)\bigg[\sum_{|\kappa|=k,|\gamma|=l}c_{\alpha,\beta,\kappa,\gamma,z}\zeta^\kappa\bar{\zeta}^\gamma\bigg]\frac{v\circ\varphi_z(\zeta)}{(1-\la\zeta,z\ra)^k(1-\la z,\zeta\ra)^l}\BKt_z(\zeta)\intd\lambda_t(\zeta)\\
			=&cv(z)\\
			&\quad +\int_{\bn}\BGt_{l+n}\phi(|\zeta|^2)(1-|\zeta|^2)\bigg[\sum_{|\kappa|=k,|\gamma|=l}c_{\alpha,\beta,\kappa,\gamma,z}\zeta^\kappa\bar{\zeta}^\gamma\bigg]\bar{R}\bigg[\frac{v\circ\varphi_z(\zeta)}{(1-\la\zeta,z\ra)^k(1-\la z,\zeta\ra)^l}\BKt_z(\zeta)\bigg]\intd\lambda_t(\zeta)\\
			=&cv(z)\\
			&\ +\int_{\bn}\BGt_{l+n}\phi(|\zeta|^2)(1-|\zeta|^2)\bigg[\sum_{|\kappa|=k,|\gamma|=l}c_{\alpha,\beta,\kappa,\gamma,z}\zeta^\kappa\bar{\zeta}^\gamma\bigg]\bar{R}\bigg[\frac{v\circ\varphi_z(\zeta)}{(1-\la z,\zeta\ra)^l}\bigg](1-\la\zeta,z\ra)^{-k}\BKt_z(\zeta)\intd\lambda_t(\zeta)\\
			=&cv(z)+\int_{\bn}\BGt_{l+n}\phi(|\zeta|^2)(1-|\zeta|^2)I^{\alpha,\beta}(z-\varphi_z(\zeta))(1-\la z,\zeta\ra)^l\bar{R}\bigg[\frac{v\circ\varphi_z(\zeta)}{(1-\la z,\zeta\ra)^l}\bigg]\BKt_z(\zeta)\intd\lambda_t(\zeta).
		\end{flalign*}
		Write $h(\zeta)=\frac{v\circ\varphi_z(\zeta)}{(1-\la z,\zeta\ra)^l}$. Then
		\begin{equation*}
			h\circ\varphi_z(w)=\frac{(1-\la z,w\ra)^lv(w)}{(1-|z|^2)^l}.
		\end{equation*}
		By Lemma \ref{lem: inner product partial},
		\begin{equation*}
			\bar{R}\bigg[\frac{v\circ\varphi_z(\zeta)}{(1-\la z,\zeta\ra)^l}\bigg]=\bar{R}h(\zeta)=\la\bpartial_\zeta h(\zeta),\zeta\ra=-\frac{\la\bpartial\big(h\circ\varphi_z\big)(\varphi_z(\zeta)),z-\varphi_z(\zeta)\ra}{1-\la z,\zeta\ra}.
		\end{equation*}
		Taking the change of variable $\zeta=\varphi_z(w)$ we get
		\begin{flalign*}
			&\int_{\bn}\BGt_{l+n}\phi(|\zeta|^2)(1-|\zeta|^2)I^{\alpha,\beta}(z-\varphi_z(\zeta))(1-\la z,\zeta\ra)^l\bar{R}\bigg[\frac{v\circ\varphi_z(\zeta)}{(1-\la z,\zeta\ra)^l}\bigg]\BKt_z(\zeta)\intd\lambda_t(\zeta)\\
			=&-\int_{\bn}\BGt_{l+n}\phi(|\zeta|^2)(1-|\zeta|^2)\\
			&\qquad \qquad \qquad I^{\alpha,\beta}(z-\varphi_z(\zeta))(1-\la z,\zeta\ra)^{l-1}\la\bpartial\big(h\circ\varphi_z\big)(\varphi_z(\zeta)),z-\varphi_z(\zeta)\ra\BKt_z(\zeta)\intd\lambda_t(\zeta)\\
			=&-\int_{\bn}\BGt_{l+n}\phi(|\varphi_z(w)|^2)(1-|\varphi_z(w)|^2)\\
			&\qquad \qquad \qquad I^{\alpha,\beta}(z-w)\bigg[\frac{1-|z|^2}{1-\la z,w\ra}\bigg]^{l-1}\la\bpartial\bigg[\frac{(1-\la z,w\ra)^lv(w)}{(1-|z|^2)^l}\bigg](w),z-w\ra\BKt_w(z)\intd\lambda_t(w)\\
			=&-\sum_{j=1}^n\int_{\bn}\BGt_{l+n}\phi(|\varphi_z(w)|^2)I^{\alpha,\beta+e_j}(z-w)\frac{1-|w|^2}{1-\la w,z\ra}\frac{\bpartial_j\big[(1-\la z,w\ra)^lv(w)\big]}{(1-\la z,w\ra)^l}\BKt_w(z)\intd\lambda_t(w)\\
			=&-\sum_{j=1}^n\int_{\bn}\BGt_{l+n}\phi(|\varphi_z(w)|^2)I^{\alpha,\beta+e_j}(z-w)S_j(w)\BKt_w(z)\intd\lambda_t(w).
		\end{flalign*}
		To find the constant $c$, recall that by Lemma \ref{lem: formula Bn 0},
		\begin{equation*}
			c=\int_{\bn}\phi(|\zeta|^2)\bigg[\sum_{|\kappa|=l,|\gamma|=l}c_{\alpha,\beta,\kappa,\gamma,z}\zeta^\kappa\bar{\zeta}^\gamma\bigg]\intd\lambda_t(\zeta).
		\end{equation*}
		Clearly if $k\neq l$ then $c=0$. Assuming $k=l$, then by \eqref{eqn: pf formula bn 2}, we compute $c$ as follows, 
		\begin{flalign*}
			c=&\int_{\bn}\phi(|\zeta|^2)|1-\la z,\zeta\ra|^{2l}I^{\alpha,\beta}(z-\varphi_z(\zeta))\intd\lambda_t(\zeta)\\
			=&\int_{\bn}\phi(|\zeta|^2)I^{\alpha,\beta}(A_z\zeta)\intd\lambda_t(\zeta)\\
			=&\frac{(n-1)!}{\pi^nB(n,t+1)}\int_0^1\phi(r^2)r^{2n++2l-1}(1-r^2)^t\bigg[\int_{\sn}I^{\alpha,\beta}(A_z\zeta)\intd\sigma(\zeta)\bigg]\intd r\\
			=&\frac{(n-1)!}{\pi^nB(n,t+1)}\cdot\frac{1}{2}\BFt_{n+l}\phi(0)\cdot\sigma_{2n-1}d_{\alpha,\beta}(z)\\
			=&\frac{\BFt_{n+l}\phi(0)}{B(n,t+1)}d_{\alpha,\beta}(z).
		\end{flalign*}
		This proves the first case of \eqref{eqn: formula Bn d''}. The second case is proved in the same way. 
		
		In \eqref{eqn: formula Bn d''}, reverse $(z, w)$, $(\alpha,\beta)$, and replace $v$ with $\bar{v}$. Then
		\begin{flalign*}
			&\int_{\bn}\phi(|\varphi_w(z)|^2)I^{\beta,\alpha}(w-z) \bar{v}(z)\BKt_z(w)\intd\lambda_t(z)\\
			=&\begin{cases}
				\frac{d_{\beta,\alpha}(w)}{B(n,t+1)}\cdot \BFt_{n+|\alpha|}\phi(0) \bar{v}(w)-\sum_{j=1}^n\int_{\bn}\BGt_{|\alpha|+n}\phi(|\varphi_w(z)|^2)I^{\beta,\alpha+e_j}(w-z)S_j(z)\BKt_z(w)\intd\lambda_t(z),&\\
				\hspace{10cm} v(w)\neq0, \BFt_{n+|\alpha|}\phi(0)<\infty,&\\
				&\\
				-\sum_{j=1}^n\int_{\bn}\BGt_{|\alpha|+n}\phi(|\varphi_w(z)|^2)I^{\beta,\alpha+e_j}(w-z)S_j(z)\BKt_z(w)\intd\lambda_t(z),&\\
				\hspace{10cm}v(w)=0, \BFt_{n+|\alpha|}\phi(0)\leq\infty,&
			\end{cases}
			\nonumber
		\end{flalign*}
		where
		\[
		S_j(z)=\frac{(1-|z|^2)\bpartial_{z_j}\big[(1-\la w,z\ra)^{|\alpha|}\bar{v}(z)\big]}{(1-\la z,w\ra)(1-\la w,z\ra)^{|\alpha|}}.
		\]
		Taking conjugate on both sides, we get Equation (\eqref{eqn: formula Bn d'}) from
		\[
		\overline{d_{\alpha,\beta}(z)}=d_{\beta,\alpha}(z),\quad|\varphi_z(w)|=|\varphi_w(z)|.
		\]
		This completes the proof.
	\end{proof}

	\begin{proof}[{\bf Proof of Lemma \ref{lem: formula F induction}}]
		By Estimates \eqref{eqn: Phi behavior}, \eqref{eqn: Rudin-Forelli 3}, and assumption \eqref{eqn: F assumption 1}, we conclude that the left hand side of Equation \eqref{eqn: formula F induction} is absolutely integrable. For each $z\in\bn, (\alpha,\beta)\in\Gamma$, we compute the following integral, 
		\begin{flalign*}
			&\int_{\bn}\Phi^{(t)}_{n,k}(|\varphi_z(w)|^2)I^{\alpha,\beta}(z-w)\frac{F_{\alpha,\beta}(z,w)}{|1-\la z,w\ra|^{2k}}\BKt_w(z)\intd\lambda_t(w)\\
			\xlongequal{\eqref{eqn: formula Bn d''}}&\frac{d_{\alpha,\beta}(z)}{B(n,t+1)}\BFt_{n+k}\Phi^{(t)}_{n,k}(0)\frac{F_{\alpha,\beta}(z,z)}{(1-|z|^2)^{2k}}\\
			&-\sum_{j=1}^n\int_{\bn}\BGt_{n+k}\Phi^{(t)}_{n,k}(|\varphi_z(w)|^2)I^{\alpha,\beta+e_j}(z-w)\frac{(1-|w|^2)\bpartial_{w_j}F_{\alpha,\beta}(z,w)}{(1-\la w,z\ra)^{k+1}(1-\la z,w\ra)^k}\BKt_w(z)\intd\lambda_t(w).
		\end{flalign*}
		By \eqref{eqn: GPhi behavior}, \eqref{eqn: Rudin-Forelli 3} and assumption \eqref{eqn: F assumption 2}, the integral
		\[
		\int_{\bn^2}\BGt_{n+k}\Phi^{(t)}_{n,k}(|\varphi_z(w)|^2)\frac{\sum_{j=1}^n\sum_{(\alpha,\beta)\in\Gamma}I^{\alpha,\beta+e_j}(z-w)(1-|w|^2)\bpartial_{w_j}F(z,w)}{(1-\la w,z\ra)^{k+1}(1-\la z,w\ra)^k}\BKt_w(z)\intd\lambda_t(w)\intd\lambda_t(z)
		\]
		converges absolutely.
		Therefore the first line of \eqref{eqn: formula F induction} equals
		\begin{flalign*}
			&\int_{\bn}\sum_{(\alpha,\beta)\in\Gamma}\frac{d_{\alpha,\beta}(z)}{B(n,t+1)}\BFt_{n+k}\Phi^{(t)}_{n,k}(0)\frac{F_{\alpha,\beta}(z,z)}{(1-|z|^2)^{2k}}\intd\lambda_t(z)\\
			&\qquad \qquad -\int_{\bn^2}\BGt_{n+k}\Phi^{(t)}_{n,k}(|\varphi_z(w)|^2)\\
			&\qquad \qquad \qquad \frac{\sum_{j=1}^n\sum_{(\alpha,\beta)\in\Gamma}I^{\alpha,\beta+e_j}(z-w)(1-|w|^2)\bpartial_{w_j}F(z,w)}{(1-\la w,z\ra)^{k+1}(1-\la z,w\ra)^k}\BKt_w(z)\intd\lambda_t(z)\intd\lambda_t(w).
		\end{flalign*}
		Again, for any $w\in\bn$, $j=1,\ldots,n$ and $(\alpha,\beta)\in\Gamma$, we compute the following integral,
		\begin{flalign*}
			&\int_{\bn}\BGt_{n+k}\Phi^{(t)}_{n,k}(|\varphi_z(w)|^2)I^{\alpha,\beta+e_j}(z-w)\frac{(1-|w|^2)\bpartial_{w_j}F(z,w)}{(1-\la w,z\ra)^{k+1}(1-\la z,w\ra)^k}\BKt_w(z)\intd\lambda_t(z)\\
			\xlongequal{\eqref{eqn: formula Bn d'}}&\sum_{i=1}^n\int_{\bn}\big(\BGt_{n+k}\big)^2\Phi^{(t)}_{n,k}(|\varphi_z(w)|^2)\\
			&\qquad \qquad \qquad I^{\alpha+e_i,\beta+e_j}(z-w)\frac{(1-|z|^2)(1-|w|^2)\partial_{z_i}\bpartial_{w_j}F(z,w)}{(1-\la w,z\ra)^{k+2}(1-\la z,w\ra)^k}\BKt_w(z)\intd\lambda_t(z)\\
			=&\sum_{i=1}^n\int_{\bn}M_{\phi_1}\big(\BGt_{n+k}\big)^2\Phi^{(t)}_{n,k}(|\varphi_z(w)|^2)\\
			&\qquad \qquad \qquad I^{\alpha+e_i,\beta+e_j}(z-w) \frac{\partial_{z_i}\bpartial_{w_j}F(z,w)}{(1-\la w,z\ra)^{k+1}(1-\la z,w\ra)^{k-1}}\BKt_w(z)\intd\lambda_t(z)\\
			=&\sum_{i=1}^n\int_{\bn}\Phi^{(t)}_{n,k+1}(|\varphi_z(w)|^2)I^{\alpha+e_i,\beta+e_j}(z-w)\frac{D_{i,j}F(z,w)}{|1-\la w,z\ra|^{2(k+1)}}\BKt_w(z)\intd\lambda_t(z).
		\end{flalign*}
		Altogether, the first line of \eqref{eqn: formula F induction} equals
		\begin{flalign*}
			&\int_{\bn}\sum_{(\alpha,\beta)\in\Gamma}\frac{d_{\alpha,\beta}(z)}{B(n,t+1)}\BFt_{n+k}\Phi^{(t)}_{n,k}(0)\frac{F_{\alpha,\beta}(z,z)}{(1-|z|^2)^{2k}}\intd\lambda_t(z)\\
			&+\int_{\bn^2}\Phi^{(t)}_{n,k+1}(|\varphi_z(w)|^2)\frac{\sum_{i=1,j}^n\sum_{(\alpha,\beta)\in\Gamma}I^{\alpha+e_i,\beta+e_j}(z-w)D_{i,j}F(z,w)}{|1-\la w,z\ra|^{2(k+1)}}\BKt_w(z)\intd\lambda_t(z)\intd\lambda_t(w).
		\end{flalign*}
		This completes the proof of Lemma \ref{lem: formula F induction}.
	\end{proof}
	
	Using the same proof of Lemma \ref{lem: formula Bn}, one can show the following.
	\begin{lem}\label{lem: BM hardy}
		Suppose $\alpha, \beta\in\ind$, and $v\in\mathscr{C}^1(\overline{\bn})$. Then the following hold.
		\begin{enumerate}
			\item If $|\alpha|\geq|\beta|$, then
			\begin{flalign}\label{eqn: BM hardy d''}
				&\int_{\sn}I^{\alpha,\beta}(z-w)v(w)K_w(z)\frac{\intd\sigma(w)}{\sigma_{2n-1}}\\
				=&d_{\alpha,\beta}(z)v(z)-\frac{1}{n}\sum_{j=1}^n\int_{\bn}|\varphi_z(w)|^{-2|\beta|-2n}I^{\alpha,\beta+e_j}(z-w)\frac{\bpartial_j\big[(1-\la z,w\ra)^{|\beta|}v(w)\big]}{(1-\la z,w\ra)^{|\beta|}(1-\la w,z\ra)}K_w(z)\intd\lambda_0(w),\nonumber
			\end{flalign}
			\item If $|\alpha|\leq|\beta|$, then
			\begin{flalign}\label{eqn: BM hardy d'}
				&\int_{\sn}I^{\alpha,\beta}(z-w)v(z)K_w(z)\frac{\intd\sigma(z)}{\sigma_{2n-1}}\\
				=&d_{\beta,\alpha}(w)v(w)+\frac{1}{n}\sum_{i=1}^n\int_{\bn}|\varphi_z(w)|^{-2|\alpha|-2n}I^{\alpha+e_i,\beta}(z-w)\frac{\partial_i\big[(1-\la z,w\ra)^{|\alpha|}v(z)\big]}{(1-\la z,w\ra)^{|\alpha|}(1-\la w,z\ra)}K_w(z)\intd\lambda_0(z),\nonumber
			\end{flalign}
		\end{enumerate}
	\end{lem}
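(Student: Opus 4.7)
The plan is to mirror the proof of Lemma \ref{lem: formula Bn}, replacing the weighted Bergman integral $\intd\lambda_t$ on $\bn$ by the normalized surface measure $\intd\sigma/\sigma_{2n-1}$ on $\sn$, and replacing the application of Lemma \ref{lem: formula Bn 0} with its Hardy analogue furnished by the Bochner-Martinelli identity developed in Appendix I (Lemma \ref{lem: formula rSn 0}). The structural steps, namely the change of variable via $\varphi_z$, the reduction to the origin, the expansion of $I^{\alpha,\beta}(z-\varphi_z(\zeta))$, and the conversion from $\bar R$ to $\bpartial$ via Lemma \ref{lem: inner product partial}, are essentially unchanged.

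Concretely, I would first establish the origin case. Taking $r=1$ in Lemma \ref{lem: formula rSn 0} and peeling off the value at the origin yields, for $v \in \mathscr{C}^1(\overline{\bn})$ and multi-indices $\kappa,\gamma$ with $|\kappa|\geq|\gamma|=l$,
\[
\int_{\sn}\zeta^\kappa\bar\zeta^\gamma v(\zeta)\frac{\intd\sigma(\zeta)}{\sigma_{2n-1}}
=\delta_{|\kappa|,|\gamma|}\,v(0)\int_{\sn}\zeta^\kappa\bar\zeta^\gamma\frac{\intd\sigma(\zeta)}{\sigma_{2n-1}}+\frac{1}{n}\int_{\bn}\frac{\zeta^\kappa\bar\zeta^\gamma}{|\zeta|^{2l+2n}}\bar R v(\zeta)\intd\lambda_0(\zeta).
\]
The prefactor $1/n$ reflects the identities $\sigma_{2n-1}=2\pi^n/(n-1)!$ and $\intd\lambda_0=(n!/\pi^n)\intd m$, and is exactly what produces the constant in front of the volume integral in \eqref{eqn: BM hardy d''}.

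For general $z\in\bn$ I would make the change of variable $w=\varphi_z(\zeta)$ on the sphere. By Lemma \ref{lem: Mobius basics}(4) the Hardy-type kernel measure transforms as $K_w(z)\intd\sigma(w)/\sigma_{2n-1}=K_z(\zeta)\intd\sigma(\zeta)/\sigma_{2n-1}$, and by Lemma \ref{lem: Mobius basics}(5) I can expand
\[
I^{\alpha,\beta}\bigl(z-\varphi_z(\zeta)\bigr)=\frac{\sum_{|\kappa|=|\alpha|,\,|\gamma|=|\beta|}c_{\alpha,\beta,\kappa,\gamma,z}\,\zeta^\kappa\bar\zeta^\gamma}{(1-\la\zeta,z\ra)^{|\alpha|}(1-\la z,\zeta\ra)^{|\beta|}},
\]
exactly as in the proof of Lemma \ref{lem: formula Bn}. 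Applying the origin formula and using $\int_{\sn}I^{\alpha,\beta}(A_z\zeta)\intd\sigma/\sigma_{2n-1}=d_{\alpha,\beta}(z)$ yields the boundary term $d_{\alpha,\beta}(z)v(z)$, which vanishes when $|\alpha|\neq|\beta|$. For the volume term, setting $h(\zeta)=v\circ\varphi_z(\zeta)/(1-\la z,\zeta\ra)^{|\beta|}$ and invoking Lemma \ref{lem: inner product partial} rewrites $\bar R h(\zeta)$ as $-\la\bpartial(h\circ\varphi_z)(\varphi_z(\zeta)),z-\varphi_z(\zeta)\ra/(1-\la z,\zeta\ra)$; changing back via $\zeta=\varphi_z(w)$ and simplifying produces precisely the sum $-\frac{1}{n}\sum_{j=1}^n\int_{\bn}|\varphi_z(w)|^{-2|\beta|-2n}I^{\alpha,\beta+e_j}(z-w)\cdots$ displayed in \eqref{eqn: BM hardy d''}. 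Part (2) then follows from part (1) by the symmetry trick used at the end of the proof of Lemma \ref{lem: formula Bn}: swap $z\leftrightarrow w$ and $\alpha\leftrightarrow\beta$, replace $v$ by $\bar v$, conjugate, and invoke $\overline{d_{\alpha,\beta}(z)}=d_{\beta,\alpha}(z)$ together with $|\varphi_z(w)|=|\varphi_w(z)|$.

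The main obstacle I anticipate is purely bookkeeping of constants: verifying that $\sigma_{2n-1}$, the Beta factor $B(n,1)=1/n$, and the conversion $\intd\lambda_0=(n!/\pi^n)\intd m$ combine to produce exactly the coefficient $1/n$ in front of the volume integrals in \eqref{eqn: BM hardy d''} and \eqref{eqn: BM hardy d'}. Beyond this, the argument is an essentially verbatim translation of Lemma \ref{lem: formula Bn}, and no new convergence issues arise since $v\in\mathscr{C}^1(\overline{\bn})$ is continuously differentiable on the closed ball.
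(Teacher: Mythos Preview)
Your proposal is correct and follows precisely the approach indicated in the paper, which simply states that Lemma \ref{lem: BM hardy} is obtained ``using the same proof of Lemma \ref{lem: formula Bn}.'' Your verification of the constant $1/n$ via $\sigma_{2n-1}=2\pi^n/(n-1)!$ and $\intd\lambda_0=(n!/\pi^n)\intd m$, together with the sphere Jacobian identity $K_w(z)\,\intd\sigma(w)=K_z(\zeta)\,\intd\sigma(\zeta)$ under $w=\varphi_z(\zeta)$, fills in exactly the details the paper omits.
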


	\section{The Higher Dimensions}\label{sec: the higher dimensions}
	The goal of this section is to prove Theorem \ref{thm: higher dimensions}. To start with, we apply Lemma \ref{lem: formula F induction} to get the following.
	
	\begin{lem}\label{lem: semicom = PR}
		Suppose $t>-1$ and $f, g\in\mathscr{C}^1(\bn)$. Suppose $f, g, \partial f, \bpartial g$ are bounded on $\bn$. Then
		\begin{equation*}
			\BTt_f\BTt_g-\BTt_{fg}=\BPt R,
		\end{equation*}
		where $R:\bert\to L^2(\lambda_t)$ is defined by
		\[
		Rh(z)=-\int_{\bn}\Phi^{(t)}_{n,1}(|\varphi_z(w)|^2)\frac{1-\la z,w\ra}{1-\la w,z\ra}\la\partial_zf,\overline{z-w}\ra\la\bpartial_wg,z-w\ra h(w)\BKt_w(z)\intd\lambda_t(w).
		\]
	\end{lem}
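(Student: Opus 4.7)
The plan is to apply the integration-by-parts formula Lemma \ref{lem: formula Bn} twice, once to pull $\bpartial g$ out of $H^{(t)}_g h$ and once to pull $\partial f$ out of the resulting expression. The starting point is the identity $(\BTt_f \BTt_g - \BTt_{fg}) h = -\BPt(f\, H^{(t)}_g h)$, which holds because $h \in \bert$ implies $\BTt_g h - gh = -H^{(t)}_g h$. Written out against the reproducing kernel this gives
\[
(\BTt_f \BTt_g - \BTt_{fg}) h(z) = -\int_{\bn} f(\zeta)\, H^{(t)}_g h(\zeta)\, \BKt_\zeta(z)\, \intd\lambda_t(\zeta),
\]
and by the reproducing property $H^{(t)}_g h(\zeta) = \int_{\bn} (g(\zeta) - g(w))\, h(w)\, \BKt_w(\zeta)\, \intd\lambda_t(w)$.

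For the first application I apply Lemma \ref{lem: formula Bn}(1) to the $w$-integral with $\phi \equiv 1$, $\alpha = \beta = 0$, and $v(w) = (g(\zeta) - g(w))\, h(w)$. Since $v(\zeta) = 0$ the constant term disappears, and since $h$ is holomorphic $\bpartial_{w_j} v = -\bpartial_j g(w) h(w)$; the formula therefore produces
\[
H^{(t)}_g h(\zeta) = \int_{\bn} \BGt_n 1(|\varphi_\zeta(w)|^2)\, \la \bpartial_w g, \zeta - w\ra\, \frac{(1-|w|^2)\, h(w)}{1-\la w, \zeta\ra}\, \BKt_w(\zeta)\, \intd\lambda_t(w).
\]
Substituting this expression back and applying Fubini's theorem (justified by the Rudin--Forelli estimates of Lemma \ref{lem: Rudin Forelli generalizations} combined with the boundedness of $f$ and $\bpartial g$) rewrites the semi-commutator as $-\int_{\bn}(1-|w|^2)\, h(w)\, J(w,z)\, \intd\lambda_t(w)$, where the inner $\zeta$-integral $J(w, z)$ contains $f(\zeta)$, $\BGt_n 1(|\varphi_\zeta(w)|^2)$, the kernel product $\BKt_w(\zeta)\BKt_\zeta(z)/(1-\la w, \zeta\ra)$, and the sum $\sum_j \bpartial_j g(w)\, I^{0, e_j}(\zeta - w)$.

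For the second application I apply Lemma \ref{lem: formula Bn}(2) to $J(w, z)$ term by term in $j$, with $\phi = \BGt_n 1$, $\alpha = 0$, $\beta = e_j$, and $v(\zeta) = f(\zeta)\, \BKt_\zeta(z)/(1-\la w, \zeta\ra)$. The key point is that $\BKt_\zeta(z) = (1-\la z,\zeta\ra)^{-(n+1+t)}$ and $(1-\la w, \zeta\ra)^{-1}$ are anti-holomorphic in $\zeta$, so $\partial_{\zeta_i} v = \partial_i f(\zeta)\, \BKt_\zeta(z)/(1-\la w, \zeta\ra)$. The ``boundary'' term now carries the coefficient $d_{0, e_j}(w) = \int_{\sn} \overline{(A_w \zeta)_j}\, \intd\sigma(\zeta)/\sigma_{2n-1}$, which \emph{vanishes} by rotational symmetry of the sphere measure; this is the crucial identity that kills an otherwise stray term containing $f$ without its derivative. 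Summing over $i, j$ and using $\sum_i \partial_i f(\zeta)(\zeta_i - w_i) = \la \partial_\zeta f, \overline{\zeta - w}\ra$, the remainder packs into the double-gradient form
\[
J(w, z) = \int_{\bn} \big(\BGt_n\big)^2 1(|\varphi_\zeta(w)|^2)\, \la \partial_\zeta f, \overline{\zeta - w}\ra\, \la \bpartial_w g, \zeta - w\ra\, \frac{(1-|\zeta|^2)\, \BKt_w(\zeta)\, \BKt_\zeta(z)}{(1-\la w, \zeta\ra)^2}\, \intd\lambda_t(\zeta).
\]

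Finally, substitute $J(w,z)$ back, swap integrations by Fubini so that $\zeta$ is outermost, and consolidate the weights via Lemma \ref{lem: Mobius basics}(2), which gives
\[
\frac{(1-|w|^2)(1-|\zeta|^2)}{(1-\la w, \zeta\ra)^2} = (1-|\varphi_\zeta(w)|^2)\, \frac{1-\la\zeta, w\ra}{1-\la w, \zeta\ra}.
\]
The relation $\Phi^{(t)}_{n, 1}(s) = (1-s)\big(\BGt_n\big)^2 1(s)$ from Appendix II then identifies the resulting $w$-integrand with $-Rh(\zeta)$, yielding
\[
(\BTt_f \BTt_g - \BTt_{fg}) h(z) = \int_{\bn} Rh(\zeta)\, \BKt_\zeta(z)\, \intd\lambda_t(\zeta) = \BPt(Rh)(z),
\]
as desired. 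The main technical burden will be justifying the two Fubini swaps by absolute-convergence estimates coming from Lemma \ref{lem: Rudin Forelli generalizations} together with the boundedness hypotheses on $f, g, \partial f, \bpartial g$; after that, the argument is bookkeeping of the two applications of Lemma \ref{lem: formula Bn}, with the vanishing of $d_{0, e_j}(w)$ as the one nonroutine identity.
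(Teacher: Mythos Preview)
Your argument is correct and is essentially the same as the paper's: both proofs apply Lemma \ref{lem: formula Bn} once in $w$ (where the constant term vanishes because the difference $g(\zeta)-g(w)$, resp.\ $f(z)-f(w)$, is zero on the diagonal) and once in the other variable (where it vanishes because $d_{0,e_j}=0$), then repackage via $\Phi^{(t)}_{n,1}=(1-s)(\BGt_n)^2 1$. The paper simply invokes Lemma \ref{lem: formula F induction} with $k=0$ and $F_{0,0}(z,w)=(f(z)-f(w))g(w)h(w)\BKt_z(\xi)$, which bundles those two applications into one step; your write-up unpacks that lemma and makes the vanishing of $d_{0,e_j}(w)$ explicit. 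One small caution: in your second application you have $\phi=\BGt_n 1$ with $\BFt_n\phi(0)=\infty$ while $v(w)\neq 0$, so neither stated case of Lemma \ref{lem: formula Bn}(2) literally applies; the resolution (used tacitly in the paper's proof of Lemma \ref{lem: formula F induction} as well) is that when $|\alpha|\neq|\beta|$ the constant $c$ in Lemma \ref{lem: formula Bn 0} is genuinely zero by sphere symmetry, so the formula holds without the finiteness hypothesis.
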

	
	\begin{proof}
		By definition, for $h\in H^\infty(\bn)$,
		\begin{equation*}
			\bigg(\BTt_f\BTt_g-\BTt_{fg}\bigg)h(\xi)=\int_{\bn}\int_{\bn}\big(f(z)g(w)-f(w)g(w)\big)h(w)\BKt_w(z)\BKt_z(\xi)\intd\lambda_t(w)\intd\lambda_t(z).
		\end{equation*}
		Denote $F_\xi(z,w)=\big(f(z)g(w)-f(w)g(w)\big)h(w)\BKt_z(\xi)$. Then
		\[
		F(z,z)=0.
		\]
		For fixed $\xi\in\bn$, $F_\xi(z,w)$ is bounded, and by Lemma \ref{lem: Mobius basics},
		\begin{equation*}
			\bigg|\sum_{j=1}^nI^{0,e_j}(z-w)\bpartial_{w_j}F(z,w)\bigg|\lesssim|z-w|\lesssim|\varphi_z(w)||1-\la z,w\ra|^{1/2}.
		\end{equation*}
		Then the assumption of Lemma \ref{lem: formula F induction} is satisfied when we take $\Gamma=\{(0,0)\}$, $k=0$, $\epsilon=0$ and $F_{0,0}=F_\xi$. Applying the lemma, we obtain the following computation,
		\begin{flalign*}
			&\bigg(\BTt_f\BTt_g-\BTt_{fg}\bigg)h(\xi)\\
			=&\int_{\bn\times\bn}\Phi^{(t)}_{n,0}(|\varphi_z(w)|^2)F(z,w)\BKt_w(z)\intd\lambda_t(w)\intd\lambda_t(z)\\
			=&-\int_{\bn\times\bn}\Phi^{(t)}_{n,1}(|\varphi_z(w)|^2)\frac{\sum_{i,j=1}^nI^{e_i,e_j}(z-w)D_{i,j}F(z,w)}{|1-\la z,w\ra|^2}\BKt_w(z)\intd\lambda_t(z)\intd\lambda_t(w)\\
			=&-\int_{\bn\times\bn}\Phi^{(t)}_{n,1}(|\varphi_z(w)|^2)\frac{1-\la z,w\ra}{1-\la w,z\ra}\\
			&\qquad \qquad \qquad \sum_{i,j=1}^nI^{e_i,e_j}(z-w)\partial_if(z)\bpartial_jg(w)h(w)\BKt_z(\xi)\BKt_w(z)\intd\lambda_t(z)\intd\lambda_t(w)\\
			=&\BPt Rh(\xi).
		\end{flalign*}
		This completes the proof.
	\end{proof}

	\begin{lem}\label{lem: semicom in trace class n>1}
		Suppose $t>2n-3$ and $f, g$ satisfy Condition 1. Then the semicommutator $\BTt_f\BTt_g-\BTt_{fg}$ belongs to the trace class.
	\end{lem}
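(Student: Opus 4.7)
The plan is to use Lemma \ref{lem: semicom = PR}, which gives $\BTt_f\BTt_g - \BTt_{fg} = \BPt R$, and since $\BPt$ is bounded, it suffices to show $R\colon\bert\to L^2(\lambda_t)$ is trace class. I would do this by factoring $R$ as a product of two Hilbert--Schmidt operators.

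First, applying Condition 1 along with the identities $|(1-\la z,w\ra)/(1-\la w,z\ra)|=1$ and $|\BKt_w(z)|=|1-\la z,w\ra|^{-(n+1+t)}$ to the kernel of $R$ yields the pointwise bound
\[
|k_R(z,w)|\leq C\,\Phi^{(t)}_{n,1}(|\varphi_z(w)|^2)\,|\varphi_z(w)|^2\,|1-\la z,w\ra|^{\epsilon - 1 - t}.
\]
The essential feature is that Condition 1 bounds the \emph{product} $|\la\partial_zf,\overline{z-w}\ra\la\bpartial_wg,z-w\ra|$ as a whole, not the two gradient factors separately. Taking square roots distributes the half-weight $|\varphi_z(w)|\,|1-\la z,w\ra|^{(n+\epsilon)/2}$ symmetrically, with one share going into each of the two Hilbert--Schmidt factors.

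The factorization $R = U\cdot V$ is realized at the operator level by inserting an auxiliary integration via the reproducing-kernel identity $\BKt_w(z)=\int_\bn \BKt_u(z)\,\BKt_w(u)\,d\lambda_t(u)$, combined with the iterated integral representation for $\Phi^{(t)}_{n,1}$ afforded by the operations $\BFt_m$, $\BGt_m$ from Appendix II. This rewrites the kernel as $k_R(z,w)=\int K_2(z,u)K_1(u,w)\,d\nu(u)$ on a suitable measure space $(\bn,\nu)$, where $K_2$ carries $\la\partial_zf,\overline{z-w}\ra$ and one share of the analytic/geometric weights, while $K_1$ carries $\la\bpartial_wg,z-w\ra$ and the other share; $R$ is then the composition of the corresponding integral operators.

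Finally, the Hilbert--Schmidt norms $\|U\|_{HS}^2$ and $\|V\|_{HS}^2$ are estimated as double integrals using the split Condition 1 bound and the Rudin--Forelli estimates (Lemma \ref{lem: Rudin Forelli generalizations}, in particular part (3), which absorbs the $|\varphi_z(w)|$-dependent factor). A careful exponent count, balancing the weight $(1-|\cdot|^2)^t$ of $d\lambda_t$ against the singularities produced by the half-Bergman-kernel $|1-\la z,\cdot\ra|^{-(n+1+t)/2}$ and the Condition 1 half-weight, isolates $t>2n-3$ as precisely the threshold for convergence. The main obstacle is the clean splitting of the M\"obius-invariant quantity $\Phi^{(t)}_{n,1}(|\varphi_z(w)|^2)$ into pieces depending separately on $(z,u)$ and $(u,w)$, which is the reason one needs the iterated integral representation developed in Appendix II; once that is in hand, the remaining Hilbert--Schmidt estimates reduce to standard weighted-kernel bookkeeping.
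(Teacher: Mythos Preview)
Your proposal has a genuine gap: the Hilbert--Schmidt factorization you describe cannot be carried out as written. The kernel of $R$ involves $\la\partial_zf,\overline{z-w}\ra$, $\la\bpartial_wg,z-w\ra$, and $\Phi^{(t)}_{n,1}(|\varphi_z(w)|^2)$, all of which depend jointly on $z$ and $w$ in a way that does not separate through an intermediate variable $u$. Your sentence ``$K_2$ carries $\la\partial_zf,\overline{z-w}\ra$'' already signals the problem: $K_2$ is supposed to be a function of $(z,u)$ alone. The reproducing-kernel identity $\BKt_w(z)=\int_{\bn}\BKt_u(z)\BKt_w(u)\,d\lambda_t(u)$ splits only the holomorphic kernel factor, not the gradient terms or the M\"obius weight. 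And the Appendix~II machinery you invoke consists of one-dimensional integral transforms $\BFt_m,\BGt_m$ on scalar functions $(0,1)\to[0,\infty)$; it does not furnish an operator factorization on $\bn$. You correctly identify the splitting of $\Phi^{(t)}_{n,1}(|\varphi_z(w)|^2)$ as the main obstacle, but nothing in the proposal actually removes it.

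The paper's argument avoids this entirely. It keeps $R$ intact and instead factors through the embedding $E:\bert\to L^2_{a,t+2c}(\bn)$ with $c=n+\epsilon$, writing
\[
\BTt_f\BTt_g-\BTt_{fg}=\BPt\,\hat R\,E,
\]
where $\hat R$ has the same integral formula as $R$ but acts from the heavier weighted space. The embedding $E$ is known to be trace class, so one only needs $\hat R$ to be \emph{bounded}, which follows from Schur's test using the kernel bound you wrote down together with Lemma~\ref{lem: Rudin Forelli generalizations}(3). The condition $t>2n-3$ enters when choosing the Schur exponent $x$ with $-2(n+\epsilon)+1>x>-2-t$. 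This route sidesteps any need to split the kernel in two.
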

	
	\begin{proof}
		Divide $\BTt_f\BTt_g-\BTt_{fg}=\BPt R$ as in Lemma \ref{lem: semicom = PR}.
		Take $\epsilon>0$ so that $t>2n-3+2\epsilon$. Let $c=n+\epsilon$ and denote $\hat{R}:L_{a,t+2c}^2(\bn)\to L^2(\lambda_t)$ the integral operator with the same integral formula as $R$, i.e.,
		\[
		\hat{R}h(z)=-\int_{\bn}\Phi^{(t)}_{n,1}(|\varphi_z(w)|^2)\frac{1-\la z,w\ra}{1-\la w,z\ra}\la\partial_zf,\overline{z-w}\ra\la\bpartial_wg,z-w\ra h(w)\BKt_w(z)\intd\lambda_t(w).
		\]
		Let $E:\bert\to L_{a,t+2c}^2(\bn)$ be the embedding map. Split $\BTt_f\BTt_g-\BTt_{fg}$ as 
		\[
		\BTt_f\BTt_g-\BTt_{fg}:\bert\xlongrightarrow{E}L_{a,t+2c}^2(\bn)\xlongrightarrow{\hat{R}}L^2(\lambda_t)\xlongrightarrow{\BPt}\bert.
		\]
		It is well-known that $E$ is in the trace class \cite{Ta-Wa-Zh:HHtrace}. It remains to show that $\hat{R}$ is bounded. By definition, the operator $\hat{R}$ has integral kernel
		\[
		\hat{F}(z,w)=C\Phi^{(t)}_{n,1}(|\varphi_z(w)|^2)\frac{1-\la z,w\ra}{1-\la w,z\ra}\la\partial_zf,\overline{z-w}\ra\la\bpartial_wg,z-w\ra\BKt_w(z)(1-|w|^2)^{-2c},
		\]
		where $C$ is a constant. By assumption,
		\[
		|\hat{F}(z,w)|\lesssim\frac{\Phi^{(t)}_{n,1}(|\varphi_z(w)|^2)|\varphi_z(w)|^2}{|1-\la z,w\ra|^{1+t-\epsilon}(1-|w|^2)^{2c}}.
		\]
		By Lemma \ref{lem: Phi asymptotic behavior},
		\[
		\Phi^{(t)}_{n,1}(s)\lesssim s^{-n-1/2}(1-s).
		\]
		Take $x$ so that
		\[
		-2(n+\epsilon)+1>x>-2-t.
		\]
		Let $y=n+\epsilon+x$, $p(w)=(1-|w|^2)^x$ and $ q(z)=(1-|z|^2)^y$. Then by Lemma \ref{lem: Rudin Forelli generalizations} (3),
		\[
		\int_{\bn}|\hat{F}(z,w)|p(w)\intd\lambda_{t+2c}(w)\lesssim\int_{\bn}\Phi_{n,1}^{(t)}(|\varphi_z(w)|^2)|\varphi_z(w)|^2\frac{(1-|w|^2)^{t+x}}{|1-\la z,w\ra|^{1+t-\epsilon}}\intd m(w)\lesssim q(z),
		\]
		\[
		\int_{\bn}|\hat{F}(z,w)|q(z)\intd\lambda_t(z)\lesssim(1-|w|^2)^{-2c}\int_{\bn}\Phi^{(t)}_{n,1}(|\varphi_z(w)|^2)|\varphi_z(w)|^2\frac{(1-|z|^2)^{t+y}}{|1-\la z,w\ra|^{1+t-\epsilon}}\intd m(z)\lesssim p(w).
		\]
		By Schur's test, $\hat{R}$ is bounded. Therefore the semicommutator is in the trace class. This completes the proof.
	\end{proof}
	
	Recall that the operations $\BFt_m, \BGt_m$ and the functions $\Phi^{(t)}_{n,k}$ are defined in Appendix II.

	\begin{lem}\label{lem: FPhi(0) expression}
		We have
		\[
		\BFt_{n+1}\Phi^{(t)}_{n,1}(0)=\sum_{j=0}^\infty \frac{B(n+1+j,t+1)}{1+j}=-\int_0^1(1-s)^{n-1}s^t\ln s\intd s.
		\]
		Consequently, as $t$ tends to infinity,
		\[
		\BFt_{n+1}\Phi^{(t)}_{n,1}(0)=n!t^{-n-1}+o(t^{-n-2}).
		\]
	\end{lem}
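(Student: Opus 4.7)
The plan is to unwind the iterated definition of $\Phi^{(t)}_{n,1}$ from Appendix II down to explicit one–variable integrals and then interchange the order of integration. From the induction step in the proof of Lemma \ref{lem: formula F induction}, the recursion for the $\Phi^{(t)}_{n,k}$ must be $\Phi^{(t)}_{n,0}\equiv 1$ and $\Phi^{(t)}_{n,k+1}(s)=(1-s)\bigl(\BGt_{n+k}\bigr)^{2}\Phi^{(t)}_{n,k}(s)$; the factor $(1-s)$ records the identity $(1-|z|^{2})(1-|w|^{2})=(1-|\varphi_z(w)|^{2})|1-\langle z,w\rangle|^{2}$ of Lemma \ref{lem: Mobius basics}(2). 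Thus $\Phi^{(t)}_{n,1}(s)=(1-s)(\BGt_n)^{2}1(s)$. Plugging into $\BFt_{n+1}(\cdot)(0)=\int_0^1(\cdot)(s)(1-s)^{t}s^{n}\intd s$ and using $\BGt_m\phi(s)=s^{-m}(1-s)^{-t-1}\BFt_m\phi(s)$, the factors $(1-s)\cdot s^{-n}(1-s)^{-t-1}$ cancel exactly against the weight $(1-s)^{t}s^{n}$, leaving
\[
\BFt_{n+1}\Phi^{(t)}_{n,1}(0)=\int_0^1 \BFt_n\BGt_n 1(r)\intd r.
\]

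To evaluate this, I would expand $\BGt_n 1$ and iterate: $\BFt_n\BGt_n 1(r)$ becomes $\int_r^1\int_x^1 x^{-1}(1-x)^{-1}(1-y)^{t}y^{n-1}\intd y\intd x$; swapping $x$ and $y$ (Fubini is immediate since the integrand is nonnegative on the triangle $\{0<r<x<y<1\}$) and using $\int_r^y x^{-1}(1-x)^{-1}\intd x=\ln(y/r)-\ln\bigl((1-y)/(1-r)\bigr)$ gives
\[
\BFt_n\BGt_n 1(r)=\int_r^1(1-y)^{t}y^{n-1}\Bigl[\ln\tfrac{y}{r}-\ln\tfrac{1-y}{1-r}\Bigr]\intd y.
\]
Integrating in $r$ and swapping once more, the elementary identities $\int_0^y\ln(y/r)\intd r=y$ and $\int_0^y\ln\bigl((1-y)/(1-r)\bigr)\intd r=y+\ln(1-y)$ combine so that the $y$'s cancel and one obtains
\[
\BFt_{n+1}\Phi^{(t)}_{n,1}(0)=-\int_0^1 y^{n-1}(1-y)^{t}\ln(1-y)\intd y.
\]
The substitution $s=1-y$ converts the right side to the claimed $-\int_0^1 s^{t}(1-s)^{n-1}\ln s\intd s$.

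For the series form, I would insert the Taylor expansion $-\ln(1-y)=\sum_{j=0}^{\infty}y^{j+1}/(j+1)$ into the last integral and interchange sum and integral (monotone convergence), recognizing $\int_0^1 y^{n+j}(1-y)^{t}\intd y=B(n+1+j,t+1)$. For the asymptotic, I would note that $-\int_0^1 s^{t}(1-s)^{n-1}\ln s\intd s=-\frac{d}{dt}B(t+1,n)=B(t+1,n)\sum_{k=1}^{n}\frac{1}{t+k}$, using $B(t+1,n)=(n-1)!/\prod_{k=1}^{n}(t+k)$; expanding both factors in powers of $1/t$ yields the leading term $(n-1)!\cdot n\cdot t^{-n-1}=n!\,t^{-n-1}$ with next-order correction of size $t^{-n-2}$.

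The only real obstacle is confirming the precise recurrence for $\Phi^{(t)}_{n,k}$ from Appendix II so that the ``cancellation of weights'' in Step 1 actually works out to a clean $\int_0^1\BFt_n\BGt_n 1\intd r$. Once that is in hand, the remainder is an unwinding of definitions plus two Fubini swaps on a positive triangular integrand together with elementary manipulations of the Beta function.
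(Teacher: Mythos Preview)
Your proposal is correct. The recursion you guess for $\Phi^{(t)}_{n,k}$ is exactly the one given in Definition~\ref{defn: BFt BGt}, so your ``only real obstacle'' is not an obstacle at all: the cancellation of weights in Step~1 works precisely as you describe, and the rest of your computation goes through.

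Compared with the paper, you reach the same identities but in the reverse order and by hand rather than by citing the packaged identities of Lemma~\ref{lem: FG properties}. The paper applies \eqref{eqn: BFt induction 1} with $m=n$, $k=1$ to collapse $\BFt_{n+1}M_{\phi_1}(\BGt_n)^2 1(0)$ to $\BFt_{n+1}\BGt_n 1(0)$, then \eqref{eqn: BFt induction 2} and \eqref{eqn: BFt 1} to obtain the series $\sum_j B(n+1+j,t+1)/(1+j)$ first; the integral form then drops out from $\sum_{j\ge 0}(1-s)^{j+1}/(j+1)=-\ln s$. You instead unwind the two $\BGt_n$'s explicitly, do two Fubini swaps on the triangle $\{r<x<y\}$, and land on the integral form $-\int_0^1 y^{n-1}(1-y)^t\ln(1-y)\,\intd y$ first; the series then follows from the Taylor expansion of $-\ln(1-y)$. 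These are the same Fubini manipulations, just organized differently. For the asymptotic, the paper uses the elementary estimate $|-\ln s-(1-s)|\lesssim (1-s)^2 s^{-1}$ to replace $-\ln s$ by $1-s$ and read off $B(n+1,t+1)\sim n!\,t^{-n-1}$, whereas your route via $-\tfrac{d}{dt}B(t+1,n)=B(t+1,n)\sum_{k=1}^n(t+k)^{-1}$ is exact and arguably cleaner, giving the full expansion rather than just the leading term.
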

	
	\begin{proof}
		First, by Lemma \ref{lem: FG properties},
		\[
		\begin{split}
		&\BFt_{n+1}\Phi^{(t)}_{n,1}(0)=\BFt_{n+1}M_{\phi_1}\big(\BGt_n\big)^21(0)=\BFt_{n+1}\BGt_n1(0)\\
		=&\sum_{j=0}^\infty\frac{1}{1+j}\BFt_{n+1+j}1(0)=\sum_{j=0}^\infty\frac{1}{1+j}B(n+1+j,t+1).
		\end{split}
		\]
		By definition, the above equals
		\begin{equation*}
			\sum_{j=0}^\infty\frac{1}{1+j}\int_0^1(1-s)^{n+j}s^t\intd s=-\int_0^1(1-s)^{n-1}s^t\ln s\intd s.
		\end{equation*}
		This proves the first line of equations. The second line of equation follows from the estimate
		\[
		|-\ln s-1+s|\lesssim(1-s)^2s^{-1}.
		\]
		This completes the proof.
	\end{proof}
	
	\begin{lem}\label{lem: Phi n 2}
		We have
		\[
		\Phi^{(t)}_{n,2}(s)=(1-s)^{-t}s^{-n-1}\sum_{k=1}^n\frac{(n-1)!\Gamma(t+1)}{(n-k)!\Gamma(t+1+k)}\int_s^1F(s,x)x^{n-k-1}(1-x)^{t+k-1}\intd x.
		\]
	\end{lem}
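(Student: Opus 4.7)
The plan is to unfold $\Phi^{(t)}_{n,2}$ into a four-fold iterated integral on a simplex, reduce it by Fubini, and identify the result via integration by parts. From the transformation chain used in the proof of Lemma \ref{lem: formula F induction} and the expression $\Phi^{(t)}_{n,1}=M_{\phi_1}(\BGt_n)^2 1$ visible in the proof of Lemma \ref{lem: FPhi(0) expression}, one has $\Phi^{(t)}_{n,2}=M_{\phi_1}(\BGt_{n+1})^2\Phi^{(t)}_{n,1}=M_{\phi_1}(\BGt_{n+1})^2 M_{\phi_1}(\BGt_n)^2 \cdot 1$. Expanding the four integrals defining the nested $\BGt_m$ and collapsing the telescoping prefactors $s^{-m}(1-s)^{-t-1}\cdot(1-r)^t r^{m-1}$ via Fubini yields
\[
\Phi^{(t)}_{n,2}(s)=s^{-n-1}(1-s)^{-t}\iiiint_{s<r_4<r_3<r_2<r_1<1}\frac{(1-r_1)^t r_1^{n-1}}{r_2 r_4(1-r_2)(1-r_4)}\intd r_1\intd r_2\intd r_3\intd r_4.
\]

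Writing $x:=r_1$ and defining
\[
G(s,x):=\iiint_{s<r_4<r_3<r_2<x}\frac{\intd r_2\intd r_3\intd r_4}{r_2 r_4(1-r_2)(1-r_4)}=\int_s^x H(s,r)H(r,x)\intd r,
\]
with $H(a,b):=\int_a^b \frac{\intd r}{r(1-r)}$ (the second equality obtained by fixing $r_3$ and doing the $r_4$ and $r_2$ integrals first), I arrive at $\Phi^{(t)}_{n,2}(s)=s^{-n-1}(1-s)^{-t}\int_s^1(1-x)^t x^{n-1}G(s,x)\intd x$. The pivotal step is that, by Lemma \ref{lem: F(s,x) as integral}, $F(s,x)=\int_s^x H(s,r)\intd r$, so differentiating under the integral sign gives
\[
\partial_x G(s,x)=\frac{1}{x(1-x)}\int_s^x H(s,r)\intd r=\frac{F(s,x)}{x(1-x)},\qquad G(s,s)=0.
\]
Hence $G(s,x)=\int_s^x F(s,y)/[y(1-y)]\intd y$, and one more application of Fubini produces
\[
\int_s^1(1-x)^t x^{n-1}G(s,x)\intd x=\int_s^1 \frac{F(s,y)}{y(1-y)}\int_y^1 (1-x)^t x^{n-1}\intd x\intd y.
\]

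To finish, I would perform $n$ successive integrations by parts on $\int_y^1 (1-x)^t x^{n-1}\intd x$, each transferring one unit from the power of $x$ to that of $(1-x)$, to obtain
\[
\int_y^1 (1-x)^t x^{n-1}\intd x=\sum_{k=1}^n\frac{(n-1)!\Gamma(t+1)}{(n-k)!\Gamma(t+k+1)}y^{n-k}(1-y)^{t+k}.
\]
Substituting back, cancelling the $y(1-y)$ denominator against the factor $y^{n-k}(1-y)^{t+k}$, and relabeling $y\to x$ yields the claimed formula. The main content is the identity $\partial_x G=F/[x(1-x)]$, which converts the opaque three-fold integral $G(s,x)$ into a form directly involving $F$; everything else is careful bookkeeping on the nested $\BGt_m$ and Fubini reorderings.
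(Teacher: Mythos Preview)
Your proof is correct and follows essentially the same route as the paper: both unfold $\Phi^{(t)}_{n,2}$ into the four-fold simplex integral, recognize the inner piece as $F(s,\cdot)$ via Lemma~\ref{lem: F(s,x) as integral}, and then reduce $\int_y^1 x^{n-1}(1-x)^t\,\intd x$ by repeated integration by parts. The only cosmetic difference is that the paper applies Lemma~\ref{lem: F(s,x) as integral} directly to the two innermost integrals (fixing $s_3$) rather than passing through your derivative identity $\partial_x G = F/[x(1-x)]$; your detour is valid but slightly longer than necessary.
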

	
	\begin{proof}
		By Definition \ref{defn: BFt BGt}, we compute $\Phi^{(t)}_{n,2}(s)$ as follows,
		\begin{flalign*}
			&\Phi^{(t)}_{n,2}(s)\\
			=&(1-s)\big(\BGt_{n+1}\big)^2M_{\phi_1}\big(\BGt_n\big)^21(s)\\
			=&(1-s)^{-t}s^{-n-1}\int_s^1s_1^n(1-s_1)^t\BGt_{n+1}M_{\phi_1}\big(\BGt_n\big)^21(s_1)\intd s_1\\
			=&(1-s)^{-t}s^{-n-1}\int_s^1s_1^{-1}(1-s_1)^{-1}\int_{s_1}^1s_2^n(1-s_2)^{t+1}\big(\BGt_n\big)^21(s_2)\intd s_2\intd s_1\\
			=&(1-s)^{-t}s^{-n-1}\int_s^1s_1^{-1}(1-s_1)^{-1}\int_{s_1}^1\int_{s_2}^1s_3^{n-1}(1-s_3)^t\BGt_n1(s_3)\intd s_3\intd s_2\intd s_1\\
			=&(1-s)^{-t}s^{-n-1}\int_s^1s_1^{-1}(1-s_1)^{-1}\int_{s_1}^1\int_{s_2}^1s_3^{-1}(1-s_3)^{-1}\int_{s_3}^1s_4^{n-1}(1-s_4)^t\intd s_4\intd s_3\intd s_2\intd s_1\\
			=&(1-s)^{-t}s^{-n-1}\iiiint_{s<s_1<s_2<s_3<s_4<1}s_1^{-1}(1-s_1)^{-1}s_3^{-1}(1-s_3)^{-1}s_4^{n-1}(1-s_4)^t\intd s_4\intd s_3\intd s_2\intd s_1\\
			=&(1-s)^{-t}s^{-n-1}\int_s^1\bigg\{\iint_{s<s_1<s_2<s_3}s_1^{-1}(1-s_1)^{-1}\intd s_1\intd s_2\bigg\}\\
			&\qquad \qquad \qquad \qquad \qquad \cdot\bigg\{\int_{s_3}^1s_4^{n-1}(1-s_4)^t\intd s_4\bigg\}s_3^{-1}(1-s_3)^{-1}\intd s_3.
		\end{flalign*}
		By Lemma \ref{lem: F(s,x) as integral}, we have the following integral, 
		\[
		\iint_{s<s_1<s_2<s_3}s_1^{-1}(1-s_1)^{-1}\intd s_1\intd s_2=F(s,s_3).
		\]
		For a positive integer $m$, and $x>-1$, temporarily denote $I(m,x)=\int_{s_3}^1s_4^{m-1}(1-s_4)^x\intd s_4$. Then, we obtain the following relations,
		\[
		I(1,x)=\frac{(1-s_3)^{x+1}}{x+1},
		\]
		and
		\begin{flalign*}
			I(m+1,x)=&\int_{s_3}^1s_4^{m}(1-s_4)^x\intd s_4\\
			=&-(x+1)^{-1}\int_{s_3}^1s_4^{m}\intd (1-s_4)^{x+1}\\
			=&\frac{1}{x+1}s_3^{m}(1-s_3)^{x+1}+\frac{m}{x+1}\int_{s_3}^1s_4^{m-1}(1-s_4)^{x+1}\intd s_4\\
			=&\frac{1}{x+1}s_3^{m}(1-s_3)^{x+1}+\frac{m}{x+1}I(m,x+1).
		\end{flalign*}
		Thus by induction, we obtain the following formula for $I(n,t)$, 
		\begin{equation*}
			\int_{s_3}^1s_4^{n-1}(1-s_4)^t\intd s_4=I(n,t)=\sum_{k=1}^n\frac{(n-1)!\Gamma(t+1)}{(n-k)!\Gamma(t+1+k)}s_3^{n-k}(1-s_3)^{t+k}.
		\end{equation*}
		Therefore, we conclude with the following formula for $\Phi^{(t)}_{n,2}$, 
		\begin{equation*}
			\Phi^{(t)}_{n,2}(s)
			=(1-s)^{-t}s^{-n-1}\sum_{k=1}^n\frac{(n-1)!\Gamma(t+1)}{(n-k)!\Gamma(t+1+k)}\int_s^1F(s,s_3)s_3^{n-k-1}(1-s_3)^{t+k-1}\intd s_3.
		\end{equation*}
		This completes the proof.
	\end{proof}
	
	The following lemma helps us study the first term of $\Tr\bigg(\BTt_f\BTt_g-\BTt_{fg}\bigg)$ after iteration.
	\begin{lem}\label{lem: d f g= f g log = f g R}
		For $f, g\in\mathscr{C}^1(\bn)$, the following are equal whenever the integrals converge.
		\begin{flalign}\label{eqn: d f g= f g log = f g R}
			\int_{\bn}\frac{\sum_{i,j=1}^nd_{e_i,e_j}(w)\partial_if(w)\bpartial_jg(w)}{(1-|w|^2)^{n+1}}\intd m(w)
			=&\frac{-1}{(2i)^nn!}\int_{\bn}\partial f\wedge\bpartial g\wedge\bigg[\partial\bpartial\log(1-|w|^2)\bigg]^{n-1}\\
			=&\frac{1}{n}\int_{\bn}\frac{\sum_{i=1}^n\partial_if(w)\bpartial_ig(w)-Rf(w)\bar{R}g(w)}{(1-|w|^2)^n}\intd m(w).\nonumber
		\end{flalign}
	\end{lem}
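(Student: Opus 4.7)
The plan is to handle the two equalities separately, treating them as essentially linear-algebraic identities once the integrals are unwrapped. The first equality (between the $d_{\alpha,\beta}$-form and the $R$-form) will follow by computing $d_{e_i,e_j}(w)$ explicitly; the second (between the differential-form expression and the $R$-form) will follow from a standard computation of $\omega^{n-1}$ where $\omega=\partial\bpartial\log(1-|w|^2)$ is (up to sign) the Bergman K\"ahler form.

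For the first equality, I would observe that Definition \ref{defn: d numbers I function} can be recast as $d_{e_i,e_j}(z)=\int_{\sn}e_i^{\,T}(A_z\zeta)(A_z\zeta)^{*}e_j\,\frac{d\sigma(\zeta)}{\sigma_{2n-1}}=\frac{1}{n}(A_zA_z^{*})_{ij}$, using the standard identity $\int_{\sn}\zeta_i\bar\zeta_j\,d\sigma/\sigma_{2n-1}=\delta_{ij}/n$. Since $P_z$ and $Q_z$ are complementary orthogonal (hence self-adjoint) projections, $A_z=(1-|z|^2)P_z+(1-|z|^2)^{1/2}Q_z$ is Hermitian, so $A_zA_z^{*}=A_z^{2}=(1-|z|^2)^{2}P_z+(1-|z|^2)Q_z=(1-|z|^2)\bigl[I-|z|^2P_z\bigr]$. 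Using $(P_z)_{ij}=z_i\bar z_j/|z|^2$, this gives the clean formula
\[
d_{e_i,e_j}(w)=\frac{1-|w|^{2}}{n}\bigl(\delta_{ij}-w_i\bar w_j\bigr).
\]
Plugging this into the first integral in \eqref{eqn: d f g= f g log = f g R} and recognizing $\sum_{i,j}w_i\bar w_j\partial_if\,\bpartial_jg=Rf\cdot\bar Rg$ immediately produces the third integral, with the $(1-|w|^2)$ factor cancelling one power in the denominator.

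For the equality with the differential form, I would compute the matrix $B_{ij}=\partial_i\bpartial_j\log(1-|w|^2)=-\bigl[(1-|w|^2)\delta_{ij}+w_j\bar w_i\bigr]/(1-|w|^2)^{2}$. Writing $B=-\tfrac{1}{r}\bigl[I+\tfrac{1}{r}\bar w w^{T}\bigr]$ with $r=1-|w|^{2}$, the matrix determinant lemma gives $\det(I+\tfrac{1}{r}\bar w w^{T})=1+|w|^{2}/r=1/r$, so $\det B=(-1)^{n}/r^{n+1}$, and the Sherman--Morrison formula yields $(B^{-1})_{ij}=-r(\delta_{ij}-\bar w_iw_j)$. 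Then I would invoke the universal K\"ahler-type identity: for any matrix $M_{kl}$, the $(n,n)$-form $\bigl(\sum_{k,l}M_{kl}\,dw_k\wedge d\bar w_l\bigr)\wedge\omega^{n-1}$, with $\omega=\sum B_{ij}\,dw_i\wedge d\bar w_j$, equals $(n-1)!\,\det(B)\,\mathrm{tr}(B^{-1}M)\,\Omega_n$, where $\Omega_n=dw_1\wedge d\bar w_1\wedge\cdots\wedge dw_n\wedge d\bar w_n=(-2i)^{n}dm$. This identity is obtained by differentiating $(B+\epsilon M)^{n}=n!\det(B+\epsilon M)\Omega_n$ at $\epsilon=0$ (legitimate because 2-forms commute under wedge, so the binomial expansion is valid). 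Applying it with $M_{kl}=\partial_kf\,\bpartial_lg$ gives $\mathrm{tr}(B^{-1}M)=-(1-|w|^{2})\bigl[\sum_i\partial_if\,\bpartial_ig-Rf\,\bar Rg\bigr]$, and after bookkeeping the factors $(-1)^{n}/r^{n+1}$, $-(1-|w|^{2})$, $(-2i)^{n}$, and the $1/((2i)^{n}n!)$ in front, one gets precisely $\tfrac{1}{n}\int_{\bn}(\sum_i\partial_if\,\bpartial_ig-Rf\,\bar Rg)(1-|w|^{2})^{-n}dm$.

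The main obstacle is purely a bookkeeping one: tracking the several overlapping sign factors coming from $\det B=(-1)^{n}/r^{n+1}$, the conversion $\Omega_n=(-2i)^{n}dm$, and the $-1/((2i)^{n}n!)$ prefactor. The rest is routine linear algebra, since once $d_{e_i,e_j}(w)$ is identified with a constant multiple of $(B^{-1})_{ij}$ up to the factor $(1-|w|^{2})$ (which indeed is the case: both equal $\tfrac{1-|w|^{2}}{n}(\delta_{ij}-w_i\bar w_j)$ up to sign), the two integrands in the first and third members of \eqref{eqn: d f g= f g log = f g R} become manifestly equal.
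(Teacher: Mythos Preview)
Your proof is correct and takes a genuinely different route from the paper's. The paper argues that each of the three integrands is invariant under unitary change of basis in $\cn$, then reduces pointwise to the case $w=(w_1,0,\ldots,0)$, where $d_{e_i,e_j}$ is given by the diagonal formula \eqref{eqn: d at (1,0...0)} and $\partial\bpartial\log(1-|w|^2)$ is already diagonal with only two distinct eigenvalues; the comparison of the three expressions is then a direct inspection. You instead compute $d_{e_i,e_j}(w)=\tfrac{1-|w|^2}{n}(\delta_{ij}-w_i\bar w_j)$ in closed form for general $w$ via the identity $d_{e_i,e_j}=\tfrac{1}{n}(A_wA_w^{*})_{ij}$, and treat the K\"ahler-form side by the determinant-derivative identity $\mu\wedge\omega^{n-1}=(n-1)!\det(B)\,\mathrm{tr}(B^{-1}M)\,\Omega_n$ together with Sherman--Morrison for $B^{-1}$. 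Your approach is more systematic and makes the link between $d_{e_i,e_j}$ and the inverse Bergman metric explicit (indeed $d_{e_i,e_j}(w)=-\tfrac{1}{n}(B^{-1})_{ji}$), at the cost of a bit more linear-algebraic machinery; the paper's approach is more elementary but hinges on the unitary-invariance observation, which you bypass entirely.
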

	
	\begin{proof}
		By Definition \ref{defn: d numbers I function}, we consider 
		\[
		\sum_{i,j=1}^nd_{e_i,e_j}(w)\partial_if(w)\bpartial_jg(w)=\int_{\sn}\bigg[\sum_{i=1}^n\big(A_w\zeta\big)_i\partial_if(w)\bigg]\bigg[\sum_{j=1}^n\overline{\big(A_w\zeta\big)_j}\bpartial_jg(w)\bigg]\frac{\intd\sigma(\zeta)}{\sigma_{2n-1}}.
		\]
		By an argument similar as in the proof of\eqref{eqn: unitary change}, the sum in each big bracket is independent of the choice of an orthonormal basis of $\mathbb{C}^n$. Thus the integrand in the left hand side of \eqref{eqn: d f g= f g log = f g R} does not depend on the choice of a basis. At each $w\in\bn$, $w\neq0$, choose a basis under which $w=(w_1,0,\ldots,0)$. By \eqref{eqn: d at (1,0...0)},
		\begin{equation}\label{eqn: proof d f g sum}
			\frac{\sum_{i,j=1}^nd_{e_i,e_j}(w)\partial_if(w)\bpartial_jg(w)}{(1-|w|^2)^{n+1}}=\frac{1}{n}\bigg[\frac{\partial_1f(w)\bpartial_1g(w)}{(1-|w|^2)^{n-1}}+\frac{\sum_{i=2}^n\partial_if(w)\bpartial_jg(w)}{(1-|w|^2)^n}\bigg].
		\end{equation}
		On the other hand, we compute
		\[
		\partial\bpartial\log(1-|w|^2)=-\partial\bigg[\frac{\sum_{j=1}^nw_j\intd \bar{w}_j}{1-|w|^2}\bigg]=-\frac{\sum_{i,j=1}^n\bar{w}_iw_j\intd w_i\wedge\intd\bar{w}_j}{(1-|w|^2)^2}-\frac{\sum_{j=1}^n\intd w_j\wedge\intd\bar{w}_j}{1-|w|^2}.
		\]
		At $w=(w_1,0,\ldots,0)$ the above equals
		\[
		-\frac{|w|^2\intd w_1\wedge\intd\bar{w}_1}{(1-|w|^2)^2}-\sum_{j=1}^n\frac{\intd w_j\wedge\intd\bar{w}_j}{1-|w|^2}=-\frac{\intd w_1\wedge\intd\bar{w}_1}{(1-|w|^2)^2}-\sum_{j=2}^n\frac{\intd w_j\wedge\intd\bar{w}_j}{1-|w|^2}.
		\]
		Thus, we have
		\[
		\bigg[\partial\bpartial\log(1-|w|^2)\bigg]^{n-1}\bigg|_{w=(w_1,0,\ldots,0)}=(-1)^{n-1}(n-1)!\bigg[\frac{\bigwedge_{j=2}^n\big(\intd w_j\wedge\intd\bar{w}_j\big)}{(1-|w|^2)^{n-1}}+\sum_{i=2}^n\frac{\bigwedge_{j\neq i}\big(\intd w_j\wedge\intd\bar{w}_j\big)}{(1-|w|^2)^n}\bigg].
		\]
		Therefore at $w=(w_1,0,\ldots,0)$, we have 
		\begin{flalign}\label{eqn: proof f g log wedge}
			&\partial f\wedge\bpartial g\wedge\bigg[\partial\bpartial\log(1-|w|^2)\bigg]^{n-1}\nonumber\\
			=&(-1)^{n-1}(n-1)!\bigg[\frac{\partial_1f(w)\bpartial_1g(w)}{(1-|w|^2)^{n-1}}+\frac{\sum_{i=2}^n\partial_if(w)\bpartial_ig(w)}{(1-|w|^2)^n}\bigg]\bigwedge_{j=1}^n\big(\intd w_j\wedge\intd\bar{w}_j\big)\nonumber\\
			=&-(2i)^n(n-1)!\bigg[\frac{\partial_1f(w)\bpartial_1g(w)}{(1-|w|^2)^{n-1}}+\frac{\sum_{i=2}^n\partial_if(w)\bpartial_ig(w)}{(1-|w|^2)^n}\bigg]\intd m(w).
		\end{flalign}
		Comparing \eqref{eqn: proof d f g sum} and \eqref{eqn: proof f g log wedge}, we conclude that at $w=(w_1,0,\ldots,0)$,
		\begin{equation*}
			\partial f\wedge\bpartial g\wedge\bigg[\partial\bpartial\log(1-|w|^2)\bigg]^{n-1}=-(2i)^nn!\frac{\sum_{i,j=1}^nd_{e_i,e_j}(w)\partial_if(w)\bpartial_jg(w)}{(1-|w|^2)^{n+1}}\intd m(w).
		\end{equation*}
		Since both sides are independent of the choice of basis, the equation holds for general $w$. This proves the first equality. 
		
		Also, it is easy to see that $\sum_{i=1}^n\partial_if(w)\bpartial_ig(w)-Rf(w)\bar{R}g(w)$ is invariant of the choice of a basis. Again, if one chooses a basis so that $w=(w_1,0,\ldots,0)$, then
		\begin{equation*}
			\sum_{i=1}^n\partial_if(w)\bpartial_ig(w)-Rf(w)\bar{R}g(w)=(1-|w|^2)\partial_1f(w)\bpartial_1g(w)+\sum_{i=2}^n\partial_if(w)\bpartial_ig(w).
		\end{equation*}
		Comparing the above and \eqref{eqn: proof d f g sum} gives
		\begin{equation}\label{eqn: proof sum d f g in R form}
			\sum_{i=1}^n\partial_if(w)\bpartial_ig(w)-Rf(w)\bar{R}g(w)=n\frac{\sum_{i,j=1}^nd_{e_i,e_j}(w)\partial_if(w)\bpartial_jg(w)}{1-|w|^2}.
		\end{equation}
		Since both sides are independent of the choice of a basis the equation holds for general $w$. Plugging \eqref{eqn: proof sum d f g in R form} into the first equality gives the second equality. This completes the proof.
	\end{proof}

	\begin{proof}[{\bf Proof of Theorem \ref{thm: higher dimensions}}]
		The fact that $\BTt_f\BTt_g-\BTt_{fg}$ belongs to the trace class is proved in Lemma \ref{lem: semicom in trace class n>1}. By Lemma \ref{lem: trace is integral of Berezin} and Lemma \ref{lem: semicom = PR}, we compute the trace of the semi-commutator as follows, 
		\begin{flalign*}
			&\Tr\bigg(\BTt_f\BTt_g-\BTt_{fg}\bigg)\\
			=&\int_{\bn}\la\bigg(\BTt_f\BTt_g-\BTt_{fg}\bigg)\BKt_\xi,\BKt_\xi\ra\\
			=&-\int_{\bn}\bigg\{\int_{\bn}\int_{\bn}\Phi^{(t)}_{n,1}(|\varphi_z(w)|^2)\frac{1-\la z,w\ra}{1-\la w,z\ra}\la\partial_zf,\overline{z-w}\ra\la\bpartial_wg,z-w\ra\\
			&\qquad \qquad \qquad \qquad \qquad \qquad \BKt_\xi(w)\BKt_w(z)\BKt_z(\xi)\intd\lambda_t(w)\intd\lambda_t(z)\bigg\}\intd\lambda_t(\xi).
		\end{flalign*}
		It follows from our assumption that $f,g$ satisfy condition 1 and Lemma \ref{lem: Rudin Forelli generalizations} that the integral converges absolutely. Applying Fubini's theorem, we continue the above computation, 
		\begin{flalign*}
			&-\int_{\bn}\int_{\bn}\Phi^{(t)}_{n,1}(|\varphi_z(w)|^2)\frac{1-\la z,w\ra}{1-\la w,z\ra}\la\partial_zf,\overline{z-w}\ra\la\bpartial_wg,z-w\ra\\
			&\qquad \qquad\qquad \qquad\qquad \qquad \bigg\{\int_{\bn}\BKt_\xi(w)\BKt_z(\xi)\intd\lambda_t(\xi)\bigg\}\BKt_w(z)\intd\lambda_t(w)\intd\lambda_t(z)\\
			=&-\int_{\bn}\int_{\bn}\Phi^{(t)}_{n,1}(|\varphi_z(w)|^2)\frac{1-\la z,w\ra}{1-\la w,z\ra}\la\partial_zf,\overline{z-w}\ra\la\bpartial_wg,z-w\ra|\BKt_w(z)|^2\intd\lambda_t(w)\intd\lambda_t(z)\\
			=&\overline{-\int_{\bn}\int_{\bn}\Phi^{(t)}_{n,1}(|\varphi_z(w)|^2)\frac{(1-\la w,z\ra)^2\sum_{i,j=1}^nI^{e_i,e_j}(w-z)\partial_i\bar{g}(w)\bpartial_j\bar{f}(z)}{|1-\la z,w\ra|^2}|\BKt_w(z)|^2\intd\lambda_t(w)\intd\lambda_t(z)}.
		\end{flalign*}
		Applying Lemma \ref{lem: formula F induction} with $\Gamma=\{(e_i,e_j): i,j=1,\ldots,n\}$, $k=1$, we obtain
		\[
		F_{e_i,e_j}(z,w)=(1-\la w,z\ra)^2\partial_i\bar{g}(w)\bpartial_j\bar{f}(z)\BKt_z(w)
		\]
		and also using Lemma \ref{lem: d f g= f g log = f g R} we get
		\begin{flalign*}
			&\int_{\bn}\int_{\bn}\Phi^{(t)}_{n,1}(|\varphi_z(w)|^2)\frac{(1-\la w,z\ra)^2\sum_{i,j=1}^nI^{e_i,e_j}(w-z)\partial_i\bar{g}(w)\bpartial_j\bar{f}(z)}{|1-\la z,w\ra|^2}|\BKt_w(z)|^2\intd\lambda_t(w)\intd\lambda_t(z)\\
			=&\int_{\bn}\int_{\bn}\Phi^{(t)}_{n,1}(|\varphi_z(w)|^2)\frac{\sum_{i,j=1}^nF_{e_i,e_j}(z,w)}{|1-\la z,w\ra|^2}\BKt_w(z)\intd\lambda_t(w)\intd\lambda_t(z)\\
			=&\frac{\BFt_{n+1}\Phi^{(t)}_{n,1}(0)}{B(n,t+1)}\int_{\bn}(1-|z|^2)^{-2}\sum_{i,j=1}^nd_{e_i,e_j}(z)F_{e_i,e_j}(z,z)\intd\lambda_t(z)\\
			&-\int_{\bn\times\bn}\Phi^{(t)}_{n,2}(|\varphi_z(w)|^2)\frac{\sum_{i,j,k,l=1}^nI^{e_i+e_k,e_j+e_l}(z-w)(1-\la z,w\ra)^2\partial_{z_k}\bpartial_{w_l}F_{e_i,e_j}(z,w)}{|1-\la z,w\ra|^4}\\
			&\qquad \qquad \qquad \qquad \qquad \qquad \qquad \qquad \BKt_w(z)\intd\lambda_t(z)\intd\lambda_t(w)\\
			=&\frac{\BFt_{n+1}\Phi^{(t)}_{n,1}(0)}{B(n,t+1)}\int_{\bn}\frac{\sum_{i,j=1}^nd_{e_i,e_j}(z)\partial_i\bar{g}(z)\bpartial_j\bar{f}(z)}{(1-|z|^2)^{n+1+t}}\intd\lambda_t(z)\\
			&-\int_{\bn\times\bn}\Phi^{(t)}_{n,2}(|\varphi_z(w)|^2)\sum_{i,j,k,l=1}^nI^{e_i+e_k,e_j+e_l}(z-w)\bpartial_l\partial_i\bar{g}(w)\partial_k\bpartial_j\bar{f}(z)|\BKt_w(z)|^2\intd\lambda_t(z)\intd\lambda_t(w)\\
			=&\frac{\BFt_{n+1}\Phi^{(t)}_{n,1}(0)}{B(n,t+1)}\cdot\frac{(n-1)!}{B(n,t+1)\pi^n}\cdot\frac{-1}{(2i)^n n!}\int_{\bn}\partial\bar{g}\wedge\bpartial \bar{f}\wedge\bigg[\partial\bpartial\log(1-|w|^2)\bigg]^{n-1}\\
			&-\int_{\bn\times\bn}\Phi^{(t)}_{n,2}(|\varphi_z(w)|^2)L_z\bar{f}(z-w)L_w\bar{g}(z-w)|\BKt_w(z)|^2\intd\lambda_t(z)\intd\lambda_t(w).
		\end{flalign*}
		Therefore, we continue the computation of the semi-commutator using the above calculation,
		\begin{flalign*}
			&\Tr\bigg(\BTt_f\BTt_g-\BTt_{fg}\bigg)\\
			=&\overline{-\int_{\bn}\int_{\bn}\Phi^{(t)}_{n,1}(|\varphi_z(w)|^2)\frac{(1-\la w,z\ra)^2\sum_{i,j=1}^nI^{e_i,e_j}(w-z)\partial_i\bar{g}(w)\bpartial_j\bar{f}(z)}{|1-\la z,w\ra|^2}|\BKt_w(z)|^2\intd\lambda_t(w)\intd\lambda_t(z)}\\
			=&-\frac{\BFt_{n+1}\Phi^{(t)}_{n,1}(0)}{B(n,t+1)}\cdot\frac{(n-1)!}{B(n,t+1)\pi^n}\cdot\frac{-1}{(-2i)^n n!}\int_{\bn}\bpartial g\wedge\partial f\wedge\bigg[-\partial\bpartial\log(1-|w|^2)\bigg]^{n-1}\\
			&+\int_{\bn\times\bn}\Phi^{(t)}_{n,2}(|\varphi_z(w)|^2)L_z f(z-w)L_wg(z-w)|\BKt_w(z)|^2\intd\lambda_t(z)\intd\lambda_t(w)\\
			=&a_{n,t}\int_{\bn}\partial f\wedge\bpartial g\wedge\bigg[\partial\bpartial\log(1-|w|^2)\bigg]^{n-1}\\
			&+\int_{\bn\times\bn}\rho_{n,t}(|\varphi_z(w)|^2)L_z f(z-w)L_wg(z-w)\frac{\intd m(z,w)}{|1-\la z,w\ra|^{2n+2}}, 
		\end{flalign*}
		where
		\[
		a_{n,t}=\frac{\BFt_{n+1}\Phi^{(t)}_{n,1}(0)}{\big(B(n,t+1)^2\big)n(2\pi i)^n}\quad\text{and}\quad\rho_{n,t}(s)=\bigg(\frac{(n-1)!}{\pi^nB(n,t+1)}\bigg)^2(1-s)^t\Phi^{(t)}_{n,2}(s). 
		\]
		By Lemma \ref{lem: FPhi(0) expression}, we have the following estimate, 
		\begin{equation}\label{eqn: proof ant}
			a_{n,t}=\frac{-\int_0^1(1-s)^{n-1}s^t\ln s\intd s}{\big(B(n,t+1)^2\big)n(2\pi i)^n}=\frac{n!t^{-n-1}+o(t^{-n-2})}{\big(B(n,t+1)^2\big)n(2\pi i)^n}=\frac{t^{n-1}}{(n-1)!(2\pi i)^n}+o(t^{n-2}).
		\end{equation}
		By Lemma \ref{lem: Phi n 2}, we have the following formula,
		\begin{equation}\label{eqn: proof rhont}
			\rho_{n,t}(s)=s^{-n-1}\sum_{k=1}^n\frac{(n-1)!\Gamma^2(n+t+1)}{(n-k)!\Gamma(t+1+k)\Gamma(t+1)\pi^{2n}}\int_s^1F(s,x)x^{n-k-1}(1-x)^{t+k-1}\intd x.
		\end{equation}
		This proves Equation \eqref{eqn: trace formula high dim}. It remains to prove \eqref{eqn: asymp trace formula high dim}. By our assumption on Condition 2, Lemmas \ref{lem: Mobius basics},  \ref{lem: Rudin Forelli generalizations} and \ref{lem: F(s,x) integral estimate}, we have the following estimates, 
		\begin{flalign*}
			&t^{1-n}\bigg|\int_{\bn\times\bn}\rho_{n,t}(|\varphi_z(w)|^2)L_z f(z-w)L_wg(z-w)\frac{\intd m(z,w)}{|1-\la z,w\ra|^{2n+2}}\bigg|\\
			\lesssim&t^{1-n}\int_{\bn}\int_{\bn}\frac{\rho_{n,t}(|\varphi_z(w)|^2)|\varphi_z(w)|^4}{|1-\la z,w\ra|^{n+2-\epsilon}}\intd m(w)\intd m(z)\\
			\xlongequal{\zeta=\varphi_z(w)}&t^{1-n}\int_{\bn}\int_{\bn}\rho_{n,t}(|\zeta|^2)|\zeta|^4\frac{|1-\la z,\zeta\ra|^{n+2-\epsilon}}{(1-|z|^2)^{n+2-\epsilon}}\cdot\frac{(1-|z|^2)^{n+1}}{|1-\la z,\zeta\ra|^{2n+2}}\intd m(\zeta)\intd m(z)\\
			=&t^{1-n}\int_{\bn}\rho_{n,t}(|\zeta|^2)|\zeta|^4\int_{\bn}\frac{(1-|z|^2)^{-1+\epsilon}}{|1-\la z,\zeta\ra|^{n+\epsilon}}\intd m(z)\intd m(\zeta)\\
			\lesssim&t^{1-n}\int_{\bn}\rho_{n,t}(|\zeta|^2)|\zeta|^4\ln\frac{1}{1-|\zeta|^2}\intd m(\zeta)\\
			\approx&t^{1-n}\int_0^1s^{n+1}\rho_{n,t}(s)\ln\frac{1}{1-s}\intd s\\
			\lesssim&t^{1-n}\int_0^1s^{n+1}\rho_{n,t}(s)(1-s)^{-1/2}\intd s\\
			=&t^{1-n}\int_0^1s^{n+1}\bigg\{s^{-n-1}\sum_{k=1}^n\frac{(n-1)!\Gamma^2(n+t+1)}{(n-k)!\Gamma(t+1+k)\Gamma(t+1)\pi^{2n}}\\
			&\qquad \qquad \qquad \qquad \qquad \qquad \int_s^1F(s,x)x^{n-k-1}(1-x)^{t+k-1}\intd x\bigg\}(1-s)^{-1/2}\intd s\\
			=&t^{1-n}\sum_{k=1}^n\frac{(n-1)!\Gamma^2(n+t+1)}{(n-k)!\Gamma(t+1+k)\Gamma(t+1)\pi^{2n}}\\
			&\qquad \qquad \qquad \qquad \qquad \qquad \int_0^1\int_s^1F(s,x)x^{n-k-1}(1-x)^{t+k-1}\intd x(1-s)^{-1/2}\intd s\\
			=&t^{1-n}\sum_{k=1}^n\frac{(n-1)!\Gamma^2(n+t+1)}{(n-k)!\Gamma(t+1+k)\Gamma(t+1)\pi^{2n}}\\
			&\qquad \qquad \qquad \qquad \qquad \qquad \int_0^1\bigg\{\int_0^xF(s,x)(1-s)^{-1/2}\intd s\bigg\}x^{n-k-1}(1-x)^{t+k-1}\intd x\\
			\lesssim&t^{1-n}\sum_{k=1}^n\frac{(n-1)!\Gamma^2(n+t+1)}{(n-k)!\Gamma(t+1+k)\Gamma(t+1)\pi^{2n}}\int_0^1x^{n-k+1}(1-x)^{t+k-1}\intd x\\
			=&t^{1-n}\sum_{k=1}^n\frac{(n-1)!\Gamma^2(n+t+1)}{(n-k)!\Gamma(t+1+k)\Gamma(t+1)\pi^{2n}}B(n-k+2,t+k)\\
			=&o(t^{-1})\\
			&\to 0,
		\end{flalign*}
		as $t$ tends to infinity. Combining the above, \eqref{eqn: proof ant}, and \eqref{eqn: trace formula high dim} we obtain \eqref{eqn: asymp trace formula high dim}. This completes the proof.
	\end{proof}

	\section{Applications and Examples}\label{sec: applications and examples}
	We start this section with some applications of Theorem \ref{thm: semi-commutator trace in dim 1}.
	Since $|\varphi_z(w)|=|\varphi_w(z)|$, it follows immediately that the second term in \eqref{eqn: semi-commutator trace in dim 1} is symmetric in the symbols $f$ and $g$. 
	As a consequence, the following trace formula for commutators of Toeplitz operators holds.

	\begin{cor}\label{cor: commutator trace in dim 1}
		Suppose $t>-1$ and $f, g\in\mathscr{C}^2(\overline{\dd})$. Then
		\begin{equation}\label{eqn: commutator trace in dim 1}
			\Tr[\BTt_f, \BTt_g]=\frac{1}{2\pi i}\int_{\dd}\intd f\wedge\intd g.
		\end{equation}
	\end{cor}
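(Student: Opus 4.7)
The plan is to derive the commutator formula directly from Theorem \ref{thm: semi-commutator trace in dim 1} via the identity
\[
[\BTt_f,\BTt_g]=\bigl(\BTt_f\BTt_g-\BTt_{fg}\bigr)-\bigl(\BTt_g\BTt_f-\BTt_{fg}\bigr),
\]
applying the trace formula \eqref{eqn: semi-commutator trace in dim 1} to each of the two semi-commutators on the right and subtracting. Both semi-commutators are in the trace class under the hypothesis $f,g\in\mathscr{C}^2(\overline{\dd})$, so $\Tr[\BTt_f,\BTt_g]$ equals the difference of the two right-hand sides.

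First I would handle the ``error'' integrals. The second term in \eqref{eqn: semi-commutator trace in dim 1} applied to $(f,g)$ is
\[
\int_{\dd^2}\varrho_t(|\varphi_z(w)|^2)\Delta f(z)\Delta g(w)\,\intd m(z,w),
\]
while the same formula applied to $(g,f)$ yields the integrand $\varrho_t(|\varphi_z(w)|^2)\Delta g(z)\Delta f(w)$. Using $|\varphi_z(w)|=|\varphi_w(z)|$ (Definition \ref{defn: Mobius transform}) and swapping the dummy variables $z\leftrightarrow w$ in the second integral shows it equals the first. Hence these two terms cancel in the difference.

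Next I would combine the two leading terms. Their difference is
\[
\frac{1}{2\pi i}\int_{\dd}\partial f\wedge\bpartial g-\frac{1}{2\pi i}\int_{\dd}\partial g\wedge\bpartial f
=\frac{1}{2\pi i}\int_{\dd}\bigl(\partial f\wedge\bpartial g+\bpartial f\wedge\partial g\bigr),
\]
using $\partial g\wedge\bpartial f=-\bpartial f\wedge\partial g$. In one complex dimension, $\partial f\wedge\partial g=0$ and $\bpartial f\wedge\bpartial g=0$ since each is a wedge of two $(1,0)$-forms (resp. two $(0,1)$-forms), so
\[
\intd f\wedge\intd g=(\partial f+\bpartial f)\wedge(\partial g+\bpartial g)=\partial f\wedge\bpartial g+\bpartial f\wedge\partial g.
\]
Substituting gives exactly \eqref{eqn: commutator trace in dim 1}. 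There is no genuine obstacle here; the only small point to verify is the symmetry argument on the $\varrho_t$-integral, which relies on the already established fact that $|\varphi_z(w)|$ depends symmetrically on $(z,w)$.
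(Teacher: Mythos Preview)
Your proof is correct and follows essentially the same approach as the paper: the authors note that since $|\varphi_z(w)|=|\varphi_w(z)|$, the second term in \eqref{eqn: semi-commutator trace in dim 1} is symmetric in $f$ and $g$, so subtracting the two semi-commutator traces cancels these terms and leaves the difference of the $\partial\wedge\bpartial$ integrals, which equals $\frac{1}{2\pi i}\int_{\dd}\intd f\wedge\intd g$. Your write-up simply spells out the wedge-product identity in dimension one more explicitly than the paper does.
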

	For the case when $t=0$, this result is well-known (cf. \cite{HH1, Zhu2001trace}).
	
	We can apply Theorem \ref{thm: semi-commutator trace in dim 1} to study Hankel operators. Recall that the Hankel operator with symbol $g$ is defined on $L^2(\lambda_t)$ by
	\[
	\BHt_g=(I-\BPt)M_g\BPt,
	\]
	where $\BPt$ is the Bergman projection. By the identity
	\[
	\BTt_f\BTt_g-\BTt_{fg}=-H^{(t)*}_{\bar{f}}H^{(t)}_g,
	\]
	we have
	\[
	\Tr(\BTt_f\BTt_g-\BTt_{fg})=-\Tr(H^{(t)*}_{\bar{f}}\BHt_g)=-\la\BHt_g, \BHt_{\bar{f}}\ra_{\mathcal{S}^2}.
	\]
	Thus \eqref{eqn: semi-commutator trace in dim 1} leads to a formula for the inner product of Hankel operators in the Hilbert-Schmidt class. In particular, it leads to a formula for the Hilbert-Schmidt norm of Hankel operators.
	
	\begin{cor}\label{cor: hankel HS norm in dim 1}
		Suppose $t>-1$ and $g\in\mathscr{C}^2(\overline{\dd})$. Then
		\[
		\|H^{(t)}_g\|_{\mathcal{S}^2}^2=\frac{1}{\pi}\int_{\dd}|\bpartial g|^2\intd m-\int_{\dd^2}\varrho_t(|\varphi_z(w)|^2)\Delta \bar{g}(z)\Delta g(w)\intd m(z,w).
		\]
		where $\varrho_t$ is defined as in Theorem \ref{thm: semi-commutator trace in dim 1}. In particular,
		\[
		\lim_{t\to\infty}\|H^{(t)}_g\|_{\mathcal{S}^2}^2=\frac{1}{\pi}\int_{\dd}|\bpartial g|^2\intd m.
		\]
	\end{cor}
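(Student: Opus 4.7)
The plan is to specialize Theorem \ref{thm: semi-commutator trace in dim 1} to the pair of symbols $\bar g, g$ and then use the Hankel--Toeplitz identity \eqref{eq:hankel} to turn the trace of the semi-commutator into a Hilbert--Schmidt norm squared. Since $g\in\mathscr{C}^2(\overline{\dd})$ we also have $\bar g\in\mathscr{C}^2(\overline{\dd})$, so Theorem \ref{thm: semi-commutator trace in dim 1} applies to $(\bar g, g)$ and gives an explicit formula for $\Tr\bigl(\BTt_{\bar g}\BTt_g-\BTt_{\bar g g}\bigr)$.

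First, take $f=\bar g$ in \eqref{eq:hankel}. This yields $\BTt_{\bar g}\BTt_g-\BTt_{\bar g g}=-H^{(t)*}_{g}\BHt_g$, so upon taking traces,
\[
\Tr\bigl(\BTt_{\bar g}\BTt_g-\BTt_{\bar g g}\bigr)=-\Tr\bigl(H^{(t)*}_{g}\BHt_g\bigr)=-\|\BHt_g\|_{\mathcal{S}^2}^2.
\]
Theorem \ref{thm: semi-commutator trace in dim 1} then gives
\[
-\|\BHt_g\|_{\mathcal{S}^2}^2=\frac{1}{2\pi i}\int_{\dd}\partial\bar g\wedge\bpartial g+\int_{\dd^2}\varrho_t(|\varphi_z(w)|^2)\Delta\bar g(z)\Delta g(w)\intd m(z,w),
\]
which is already the desired formula once we identify the first term on the right.

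Second, I would simplify $\frac{1}{2\pi i}\int_{\dd}\partial\bar g\wedge\bpartial g$ as $-\frac{1}{\pi}\int_{\dd}|\bpartial g|^2\intd m$. Indeed $\partial\bar g=\overline{\bpartial g}\,\intd z$ and $\bpartial g=\bpartial g\,\intd\bar z$, so $\partial\bar g\wedge\bpartial g=|\bpartial g|^2\,\intd z\wedge\intd\bar z=-2i|\bpartial g|^2\intd m$. Dividing by $2\pi i$ gives the stated sign, and rearranging produces the claimed identity for $\|\BHt_g\|_{\mathcal{S}^2}^2$.

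Finally, for the limit as $t\to\infty$, I would invoke \eqref{eqn: semi-commutator trace in dim 1 limit} applied to the pair $(\bar g, g)$: the double integral against $\varrho_t$ tends to zero because $\int_0^1\varrho_t(s)(1-s)^{-1/2}\intd s=O(t^{-1})$, exactly as established at the end of the proof of Theorem \ref{thm: semi-commutator trace in dim 1}. Combining with the formula above yields $\lim_{t\to\infty}\|\BHt_g\|_{\mathcal{S}^2}^2=\frac{1}{\pi}\int_{\dd}|\bpartial g|^2\intd m$. There is no real obstacle in this argument; the only step that requires care is keeping track of the signs and orientations when converting $\partial\bar g\wedge\bpartial g$ into $|\bpartial g|^2\intd m$.
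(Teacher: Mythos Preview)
Your proposal is correct and follows essentially the same route as the paper: the paper derives the corollary by specializing the identity $\BTt_f\BTt_g-\BTt_{fg}=-H^{(t)*}_{\bar f}\BHt_g$ to $f=\bar g$ and then invoking Theorem~\ref{thm: semi-commutator trace in dim 1} (including its limit statement). Your computation of $\frac{1}{2\pi i}\int_{\dd}\partial\bar g\wedge\bpartial g=-\frac{1}{\pi}\int_{\dd}|\bpartial g|^2\,\intd m$ is exactly the simplification needed, and the sign tracking is correct.
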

	
	For Hankel operators with real subharmonic symbols, the second term is non-negative. So the following holds.
	
	\begin{cor}\label{cor: Hankel HS norm inequality in dim 1}
		Suppose $t>-1$ and $g\in\mathscr{C}^2(\overline{\dd})$ is real-valued and subharmonic in $\dd$. Then
		\begin{equation}\label{eqn: Hankel HS norm in dim 1}
			\|\BHt_g\|_{\mathcal{S}^2}^2\leq\frac{1}{\pi}\int_{\dd}|\bpartial g|^2\intd m,
		\end{equation}
		with equality holds if and only if $g$ is harmonic in $\dd$.
	\end{cor}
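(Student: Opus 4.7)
The plan is to read the inequality directly off the Hilbert-Schmidt identity in Corollary \ref{cor: hankel HS norm in dim 1}. Since $g$ is real-valued we have $\bar g = g$, so the identity becomes
\[
\|\BHt_g\|_{\mathcal{S}^2}^2=\frac{1}{\pi}\int_{\dd}|\bpartial g|^2\intd m-\int_{\dd^2}\varrho_t(|\varphi_z(w)|^2)\Delta g(z)\Delta g(w)\intd m(z,w).
\]
I will then argue that the double integral on the right is non-negative. Since $g$ is subharmonic and real-valued, $\Delta g(z)\geq 0$ and $\Delta g(w)\geq 0$, so the product $\Delta g(z)\Delta g(w)$ is pointwise non-negative. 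Combined with the strict positivity of $\varrho_t$ on $(0,1)$ (asserted in Theorem \ref{thm: semi-commutator trace in dim 1}) and the fact that the diagonal $\{z=w\}$ has planar measure zero in $\dd^2$, the integrand is non-negative a.e. Subtracting a non-negative quantity gives the inequality \eqref{eqn: Hankel HS norm in dim 1}.

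For the equality characterization, the ``if'' direction is trivial: when $g$ is harmonic, $\Delta g\equiv 0$, the second term vanishes, and equality holds. For the ``only if'' direction, suppose equality holds in \eqref{eqn: Hankel HS norm in dim 1}. Then the non-negative integrand $\varrho_t(|\varphi_z(w)|^2)\Delta g(z)\Delta g(w)$ must vanish for $m\times m$-a.e.\ pair $(z,w)\in\dd^2$. Because $\varrho_t(|\varphi_z(w)|^2)>0$ off the diagonal, this forces $\Delta g(z)\Delta g(w)=0$ a.e. If $\Delta g$ were not identically zero, continuity of $\Delta g$ (recall $g\in\mathscr{C}^2(\overline\dd)$) together with subharmonicity would produce an open set $E\subset\dd$ on which $\Delta g>0$, and then $\Delta g(z)\Delta g(w)>0$ on the product $E\times E$, which has positive measure; this is a contradiction. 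Hence $\Delta g\equiv 0$ on $\dd$, i.e., $g$ is harmonic.

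The argument is essentially a sign-check on the formula from Corollary \ref{cor: hankel HS norm in dim 1}, and no step poses a real obstacle. The only mild point requiring care is the equality case, where one must invoke the strict positivity of $\varrho_t$ together with the continuity of $\Delta g$ to promote the a.e.\ vanishing of $\Delta g(z)\Delta g(w)$ to the conclusion $\Delta g\equiv 0$.
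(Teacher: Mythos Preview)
Your proposal is correct and follows essentially the same approach as the paper: the paper simply observes that for real subharmonic $g$ the second term in Corollary \ref{cor: hankel HS norm in dim 1} is non-negative, which is exactly your sign-check argument. Your treatment of the equality case (using strict positivity of $\varrho_t$ and continuity of $\Delta g$) fills in details the paper leaves implicit.
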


	As explained in the introduction, in this paper we focus more on the trace formula \eqref{eqn: trace formula high dim} and asymptotic trace formula \eqref{eqn: asymp trace formula high dim} of semi-commutators with relatively nice symbols. Nonetheless, the following lemma and the examples that follow show that Condition 1 is a natural condition to work with.

	\begin{lem}\label{lem: gradient product}
		Suppose $n\geq2$ and $f, g$ satisfy Condition 1. Then there exists a constant $C>0$ such that for any $z\in\bn\backslash\{0\}$,
		\begin{equation}\label{eqn: gradient product}
			\big|\la\partial_zf,\bar{\zeta}\ra\la\bpartial_zg,\zeta\ra\big|\leq C\bigg(|P_z(\zeta)|^2+(1-|z|^2)|Q_z(\zeta)|^2\bigg)(1-|z|^2)^{n-2+\epsilon},\quad\forall\zeta\in\cn.
		\end{equation}
		\begin{enumerate}
			\item[(1)] In the special case when $f=\bar{g}$, \eqref{eqn: gradient product} becomes
			\begin{equation}\label{eqn: bar gradient inequality}
				\big|\la\bpartial_zg,\zeta\ra\big|\leq C_1\bigg(|P_z(\zeta)|+(1-|z|^2)^{1/2}|Q_z(\zeta)|\bigg)(1-|z|^2)^{\frac{n-2+\epsilon}{2}},
			\end{equation}
			which is equivalent to Condition 1. Here $C_1$ is another constant.
			\item[(2)] If there are $a, b\geq0$, $a+b>n-2$ such that
			\begin{equation}
				\big|\la\partial_zf,\bar{\zeta}\ra\big|\leq C_2\bigg(|P_z(\zeta)|+(1-|z|^2)^{1/2}|Q_z(\zeta)|\bigg)(1-|z|^2)^a,
			\end{equation}
			\begin{equation}
				\big|\la\bpartial_zg,\zeta\ra\big|\leq C_2\bigg(|P_z(\zeta)|+(1-|z|^2)^{1/2}|Q_z(\zeta)|\bigg)(1-|z|^2)^b
			\end{equation}
			with some constant $C_2$, then $f, g$ satisfy Condition 1.
		\end{enumerate}
		
	\end{lem}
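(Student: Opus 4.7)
The plan is to extract the pointwise inequality \eqref{eqn: gradient product} by specializing Condition 1 along a family of points $w$ approaching $z$ in a direction dictated by $\zeta$. For $z\in\bn\setminus\{0\}$ the linear map $A_z\eta=(1-|z|^2)P_z(\eta)+(1-|z|^2)^{1/2}Q_z(\eta)$ from Lemma \ref{lem: Mobius basics}(5) is invertible on $\cn$, so for any $\zeta\in\cn$ I can write $\zeta=A_z\eta$ and set $w_t=\varphi_z(t\eta)$ for small $t>0$. Then $|\varphi_z(w_t)|^2=t^2|\eta|^2$ and Lemma \ref{lem: Mobius basics}(5) gives
\[
z-w_t=\frac{A_z(t\eta)}{1-\la t\eta,z\ra}=\frac{t\zeta}{1-t\la\eta,z\ra},
\]
while a direct computation (using $\la z,P_z(t\eta)\ra=t\la z,\eta\ra$ and $Q_z(t\eta)\perp z$) shows $|1-\la z,w_t\ra|\to 1-|z|^2$ as $t\to 0$. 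Plugging $w=w_t$ into Condition 1, factoring $t$ out of each pairing, dividing by $t^2$, and letting $t\to 0$ (using continuity of $\bpartial g$, so $\bpartial_{w_t}g\to\bpartial_zg$) yields
\[
\big|\la\partial_zf,\bar\zeta\ra\la\bpartial_zg,\zeta\ra\big|\le C|\eta|^2(1-|z|^2)^{n+\epsilon}.
\]
The identity $|\eta|^2=(1-|z|^2)^{-2}|P_z(\zeta)|^2+(1-|z|^2)^{-1}|Q_z(\zeta)|^2$, read off from $\zeta=A_z\eta$, then produces exactly \eqref{eqn: gradient product}.

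For part (1), when $f=\bar g$ one has $\la\partial_zf,\bar\zeta\ra=\overline{\la\bpartial_zg,\zeta\ra}$, so the main inequality reduces to a bound on $|\la\bpartial_zg,\zeta\ra|^2$ and taking square roots gives \eqref{eqn: bar gradient inequality}. For the converse, substitute $\zeta=z-w$ into \eqref{eqn: bar gradient inequality}; Lemma \ref{lem: Mobius basics}(6) supplies $|P_z(z-w)|\le|\varphi_z(w)||1-\la z,w\ra|$ and $|Q_z(z-w)|\lesssim|\varphi_z(w)||1-\la z,w\ra|^{1/2}$, and (7) absorbs $(1-|z|^2)^{1/2}$ into $|1-\la z,w\ra|^{1/2}$, giving
\[
|\la\bpartial_zg,z-w\ra|\lesssim|\varphi_z(w)||1-\la z,w\ra|(1-|z|^2)^{(n-2+\epsilon)/2}.
\]
The analogous estimate with $z,w$ swapped holds, and multiplying the two while absorbing $(1-|z|^2)^{(n-2+\epsilon)/2}(1-|w|^2)^{(n-2+\epsilon)/2}$ into $|1-\la z,w\ra|^{n-2+\epsilon}$ via (7) recovers Condition 1. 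Part (2) runs in exactly the same fashion: setting $\zeta=z-w$ in the two hypothesized bounds and applying the Möbius estimates gives $|\la\partial_zf,\overline{z-w}\ra|\lesssim|\varphi_z(w)||1-\la z,w\ra|(1-|z|^2)^a$ and $|\la\bpartial_wg,z-w\ra|\lesssim|\varphi_z(w)||1-\la z,w\ra|(1-|w|^2)^b$, and multiplying yields $\lesssim|\varphi_z(w)|^2|1-\la z,w\ra|^{2+a+b}$; the assumption $a+b>n-2$ lets one set $\epsilon=a+b-n+2>0$ and conclude Condition 1.

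The main obstacle I anticipate lies in step one: one must carefully linearize both pairings in $t$ after extracting the common factor $(1-t\la\eta,z\ra)^{-1}$, track $|1-\la z,w_t\ra|$ despite its direction-dependent imaginary part $2it\,\mathrm{Im}\la z,\eta\ra$ (which is only first-order in $t$), and note that $A_z$ being invertible on $\bn\setminus\{0\}$ is exactly what allows arbitrary $\zeta\in\cn$ to be reached. Once this limit is justified, the remainder of the argument is a bookkeeping exercise in the Möbius estimates of Lemma \ref{lem: Mobius basics}.
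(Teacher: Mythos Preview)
Your proposal is correct and follows essentially the same strategy as the paper: linearize Condition~1 along a one-parameter family $w\to z$ to extract the pointwise bound \eqref{eqn: gradient product}, then for the converse directions substitute $\zeta=z-w$ and invoke the M\"obius estimates of Lemma~\ref{lem: Mobius basics}. The only noteworthy difference is your choice of path: you parametrize via $w_t=\varphi_z(t\eta)$ with $\zeta=A_z\eta$, whereas the paper simply takes the affine path $w=z+\lambda\zeta$ and reads off $|\varphi_z(w)|^2=|\lambda|^2\big(|P_z(\zeta)|^2+(1-|z|^2)|Q_z(\zeta)|^2\big)/|1-\la z,w\ra|^2$ directly from the definition of $\varphi_z$. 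The affine path avoids inverting $A_z$ and computing $1-\la z,w_t\ra$ through the M\"obius identity, so it is a bit cleaner; your route is more geometric but requires the extra bookkeeping you flagged. Similarly, for the converse in (1) the paper uses the \emph{exact} identity $|z-P_z(w)|^2+(1-|z|^2)|Q_z(w)|^2=|\varphi_z(w)|^2|1-\la z,w\ra|^2$ rather than the separate inequalities of Lemma~\ref{lem: Mobius basics}(6), which again shortens things slightly but is equivalent to what you do.
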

	
	\begin{proof}
		Note that
		\begin{equation}\label{eqn: estimates}
			|1-\la z,w\ra|\approx(1-|z|^2)+(1-|w|^2)+|z-w|^2+|\mathrm{Im}\la z,w\ra|,\ |\varphi_z(w)|^2=\frac{|z-P_z(w)|^2+(1-|z|^2)|Q_z(w)|^2}{|1-\la z,w\ra|^2}.
		\end{equation}

		Take $w=z+\lambda\zeta$ where $\lambda\in\CC$ is sufficiently small. Then by definition,
		\[
		|\varphi_z(w)|^2=|\lambda|^2\frac{|P_z(\zeta)|^2+(1-|z|^2)|Q_z(\zeta)|^2}{|1-\la z,w\ra|^2}.
		\]
		Condition 1 implies
		\[
		\big|\la\partial_zf,\bar{\zeta}\ra\la\bpartial_wg,\zeta\ra\big||\lambda|^2\leq C|\lambda|^2\frac{|P_z(\zeta)|^2+(1-|z|^2)|Q_z(\zeta)|^2}{|1-\la z,w\ra|^2}|1-\la z,w\ra|^{n+\epsilon}.
		\]
		Canceling out $|\lambda|^2$ and letting $\lambda\to0$ we obtain the first inequality in \eqref{eqn: gradient product}. The second inequality is proved similarly. 
		
		On the other hand, suppose $f=\bar{g}$ and \eqref{eqn: bar gradient inequality} holds. Then
		\begin{flalign*}
			\big|\la\bpartial_zg,z-w\ra\big|^2\lesssim&\bigg(|z-P_z(w)|^2+(1-|z|^2)|Q_z(w)|^2\bigg)(1-|z|^2)^{n-2+\epsilon}\\
			\lesssim&\bigg(|z-P_z(w)|^2+(1-|z|^2)|Q_z(w)|^2\bigg)|1-\la z,w\ra|^{n-2+\epsilon}\\
			=&|\varphi_z(w)|^2|1-\la z,w\ra|^{n+\epsilon}.
		\end{flalign*}
		Equivalently,
		\[
		\big|\la\bpartial_wg,z-w\ra\big|^2\lesssim|\varphi_z(w)|^2|1-\la z,w\ra|^{n+\epsilon}
		\]
		and also
		\[
		\big|\la\partial_z\bar{g},\overline{z-w}\ra\big|^2\lesssim|\varphi_z(w)|^2|1-\la z,w\ra|^{n+\epsilon}.
		\]
		Multiplying the two inequalities and taking square root gives Condition 1 for $f=\bar{g}$. This proves (1). Statement (2) is proved in the same way as (1). We omit the details.
	\end{proof}
	
	Similarly, one may give sufficient conditions for Condition 2 in terms of growth rates of second order derivatives. Taking the case $f=\bar{g}$ for example, we have the following.
	
	\begin{lem}
Suppose $n\geq2$ and $g\in\mathscr{C}^2(\bn)$. If $g$ satisfies \eqref{eqn: bar gradient inequality} and for some constant $C>0$ and $a\geq\max\{0, \frac{n}{2}-2\}$,
\begin{equation*}
|L_zg(\zeta)|\leq C\bigg(|P_z(\zeta)|^2+(1-|z|^2)|Q_z(\zeta)|^2\bigg)(1-|z|^2)^a,
\end{equation*}
then $f=\bar{g}$ and $g$ satisfy Condition 1 and 2.
	\end{lem}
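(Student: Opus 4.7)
The plan is to verify Condition 1 and Condition 2 directly from the two hypotheses, with $f = \bar g$. Condition 1 is already handled by Lemma \ref{lem: gradient product}(1), which asserts that \eqref{eqn: bar gradient inequality} is equivalent to Condition 1 in the case $f = \bar g$. So the real work lies in the three additional inequalities that make up Condition 2.

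The central reduction uses the identity $|z - P_z(w)|^2 + (1-|z|^2)|Q_z(w)|^2 = |\varphi_z(w)|^2 |1-\la z, w\ra|^2$ from \eqref{eqn: estimates}. Plugging $\zeta = z - w$ (so $P_z(\zeta) = z - P_z(w)$ and $Q_z(\zeta) = -Q_z(w)$) into the hypothesis on $L_z g$ yields
\begin{equation*}
|L_z g(z - w)| \leq C |\varphi_z(w)|^2 |1-\la z, w\ra|^2 (1-|z|^2)^a,
\end{equation*}
and analogously $|L_w g(z-w)| \leq C|\varphi_z(w)|^2 |1-\la z,w\ra|^2 (1-|w|^2)^a$. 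Since $\partial_i \bpartial_j \bar g = \overline{\partial_j \bpartial_i g}$, a short computation gives $L_z \bar g(\xi) = \overline{L_z g(\xi)}$, so $|L_z\bar g(z-w)| = |L_z g(z-w)|$ and likewise at $w$. In parallel, the argument inside the proof of Lemma \ref{lem: gradient product}(1), combined with the pointwise estimates \eqref{eqn: |z-w|}, gives
\begin{equation*}
|\la \bpartial_z g, z - w\ra| \leq C |\varphi_z(w)| |1-\la z, w\ra| (1-|z|^2)^{(n-2+\epsilon_0)/2},
\end{equation*}
where $\epsilon_0 > 0$ is the constant from \eqref{eqn: bar gradient inequality}, and symmetrically at $w$. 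Multiplying the appropriate pairs of these bounds will produce each of the three required inequalities in Condition 2.

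The exponent bookkeeping proceeds as follows. For $|\la \partial_z \bar g, \overline{z-w}\ra L_w g(z-w)|$ one obtains $|\varphi_z(w)|^3 |1-\la z, w\ra|^3 (1-|z|^2)^{(n-2+\epsilon_0)/2}(1-|w|^2)^a$; using Lemma \ref{lem: Mobius basics}(7) to absorb $(1-|z|^2)^{(n-2+\epsilon_0)/2}(1-|w|^2)^a$ into $|1-\la z,w\ra|^{(n-2+\epsilon_0)/2 + a}$ yields the required $|\varphi_z(w)|^3|1-\la z,w\ra|^{n+\epsilon}$ with, say, $\epsilon = \epsilon_0/2$, provided $a \geq n/2 - 2$. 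The second cross estimate is entirely symmetric. For the Levi-Levi product $|L_z\bar g(z-w) L_w g(z-w)|$ we reach $|\varphi_z(w)|^4 |1-\la z, w\ra|^{4+2a}$, and $4 + 2a \geq n$ again by $a \geq n/2 - 2$. The main obstacle is precisely this exponent accounting: the Levi-Levi inequality is the tightest and is what forces the threshold $a \geq n/2 - 2$, while the $a \geq 0$ branch of the hypothesis is what allows $(1-|z|^2)^a$ and $(1-|w|^2)^a$ to be monotonically absorbed into powers of $|1-\la z, w\ra|$ in the first place.
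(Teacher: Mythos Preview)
The paper does not actually supply a proof of this lemma; it is stated immediately after Lemma~\ref{lem: gradient product} with the remark that ``similarly, one may give sufficient conditions for Condition 2 in terms of growth rates of second order derivatives.'' Your argument is precisely the intended one: it mirrors the proof of Lemma~\ref{lem: gradient product}(1), replacing the gradient estimate by the Levi-form estimate and absorbing powers of $1-|z|^2$ and $1-|w|^2$ into $|1-\la z,w\ra|$ via Lemma~\ref{lem: Mobius basics}(7). The computation $L_z\bar g(\xi)=\overline{L_zg(\xi)}$ and the use of the identity $|z-P_z(w)|^2+(1-|z|^2)|Q_z(w)|^2=|\varphi_z(w)|^2|1-\la z,w\ra|^2$ are exactly right.

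One small point worth flagging: at the boundary value $a=\max\{0,\frac{n}{2}-2\}$ with $n\geq 4$, your Levi--Levi product yields $|\varphi_z(w)|^4|1-\la z,w\ra|^{4+2a}=|\varphi_z(w)|^4|1-\la z,w\ra|^{n}$, with no surplus $\epsilon>0$. This is not a defect in your method---the two cross terms do acquire an extra $\epsilon_0/2$ from the gradient hypothesis, but the pure Levi--Levi term does not---and it suggests that the paper's hypothesis should read $a>\frac{n}{2}-2$ (strict) when $n\geq 4$. Your argument is complete and sharp once that is noted.
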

	
The Schatten class criterion of Hankel operators is throughly studied, c.f. \cite{Ar-Fi-Pe:Hankel, Ar-Fi-Ja-Pe:hankel, Li-Lu:HankelinSp, Zheng:Hankel, Zhu:SchattenHankel}. There are also some results on the Schatten norms of Hankel operator with anti-holomorphic symbols, c.f. \cite{Ja-Up-Wa:SchattenNormIdentity}\cite{Xia:SchattenNormIdentity}. In \cite[Theorem 3.1]{Li-Lu:HankelinSp}, Li and Luecking gave a criterion for Hankel operators to be in $\mathcal{S}^p$. Our condition in (1) of Lemma \ref{lem: gradient product} is consistent with that of $f_2$ in Li and Luecking's Theorem 3.1 when $p=2$. One can also check using \cite[Theorem 3.1]{Li-Lu:HankelinSp} that when (2) of Lemma \ref{lem: gradient product} holds, $\BHt_{\bar{f}}\in\mathcal{S}^p$ and $\BHt_g\in\mathcal{S}^q$ for some $\frac{1}{p}+\frac{1}{q}=1$. So the trace class membership of $\BTt_f\BTt_g-\BTt_{fg}$ follows from the identity $\BTt_f\BTt_g-\BTt_{fg}=-H^{(t)*}_{\bar{f}}\BHt_g$. The converse, however, is not true: there are symbols $f, g$ such that $\BHt_{\bar{f}}$ and $\BHt_g$ belong only to bigger Schatten classes but their product belongs to the trace class. The following lemma gives us a clue.
	
	\begin{lem}\label{lem: f g compact support}
		Suppose $f, g\in\mathscr{C}^2(\bn)$ are bounded and have bounded first and second order derivatives. If $\supp fg$ is a compact subset in $\bn$ then $f, g$ satisfy Conditions 1 and 2 in Theorem \ref{thm: higher dimensions}. In particular \eqref{eqn: trace formula high dim} and \eqref{eqn: asymp trace formula high dim} hold.
	\end{lem}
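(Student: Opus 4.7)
The plan is to exploit the structural constraint that $K := \supp(fg)$ is compact in $\bn$. Fix $\delta > 0$ with $K \subset \{|z| \leq 1-\delta\}$, so that $|1-\la z, w\ra| \geq 1 - |z||w| \geq \delta$ whenever at least one of $z, w$ lies in $K$. I first establish the following dichotomy on $U := \bn \setminus K$: at each $z \in U$, either all first and second order derivatives of $f$ vanish at $z$ (\emph{type F}), or all first and second order derivatives of $g$ vanish at $z$ (\emph{type G}). Indeed, if $z \in \mathrm{int}\{f=0\}$ then type F holds trivially; otherwise $z \in \overline{\{f \neq 0\}}$, and on the open set $\{f \neq 0\} \cap U$ the identity $fg \equiv 0$ forces $g \equiv 0$. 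Differentiating and using the continuity of first and second derivatives of $g$ (since $g \in \mathscr{C}^2$) then shows that all these derivatives of $g$ vanish at $z$, giving type G.

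I then verify Condition 1 by case analysis on $\Psi(z,w) := |\la \partial_z f, \overline{z-w}\ra \la \bpartial_w g, z-w\ra|$. If at least one of $z, w$ lies in $K$, then $|1-\la z, w\ra| \geq \delta$, and combining Lemma \ref{lem: Mobius basics}(6) with the bounded first derivatives of $f, g$ gives $\Psi(z,w) \leq C|z-w|^2 \leq C'|\varphi_z(w)|^2 |1-\la z, w\ra|$; the factor $|1-\la z, w\ra|^{n+\epsilon-1} \geq \delta^{n+\epsilon-1}$ then absorbs the remaining power. If both $z, w \in U$, the dichotomy forces $\Psi = 0$ unless $z$ is of type G and $w$ is of type F. In this remaining sub-case $\partial f(w) = 0$ and $\bpartial g(z) = 0$, so the bounded second derivatives of $f, g$ yield $|\partial f(z)|, |\bpartial g(w)| \leq C|z-w|$ by the mean value inequality, and hence $\Psi(z,w) \leq C^2|z-w|^4$. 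Condition 2 is verified in an entirely parallel fashion, using that the Levi forms $L_z f$ at type-F points and $L_w g$ at type-G points, along with the corresponding second-order derivatives, also vanish.

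To convert these pointwise bounds into the forms required by Conditions 1 and 2 one applies Lemma \ref{lem: Mobius basics}(6) and chooses $\epsilon > 0$ sufficiently small. The main obstacle is the sub-case in Condition 1 (and its analogues for Condition 2) where both $z, w \in U$ and $|1-\la z, w\ra|$ may be arbitrarily small: the bound $\Psi \leq C|z-w|^4$ obtained from bounded second derivatives alone is not in itself enough to extract the factor $|1-\la z, w\ra|^{n+\epsilon}$ for $n \geq 2$ and $\epsilon > 0$. Overcoming this requires a more delicate use of the full second-order vanishing of the Taylor expansions of $f$ (resp.\ $g$) at the opposite-type points, combined with the geometric estimate $|z-w|^2 \leq 2|1-\la z, w\ra|$ and the refined equivalences recorded in Lemma \ref{lem: Mobius basics}.
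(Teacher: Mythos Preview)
Your case analysis is considerably more elaborate than the paper's argument, which consists of the single assertion that $|1-\la z,w\ra|$ is bounded away from $0$ on $\supp f\times\supp g$; granting that, the left-hand sides of Conditions 1 and 2 vanish off $\supp f\times\supp g$, and on $\supp f\times\supp g$ the bound $|z-w|^k\lesssim|\varphi_z(w)|^k|1-\la z,w\ra|^{k/2}$ from Lemma \ref{lem: Mobius basics}(6) together with $|1-\la z,w\ra|\ge\delta$ supplies any extra power of $|1-\la z,w\ra|$ for free. Your dichotomy and opposite-type sub-case are never needed under that assertion.

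The gap you flag in the opposite-type sub-case is genuine and cannot be closed under the literal hypothesis that $\supp(fg)$ is compact. In fact the lemma as stated is \emph{false} for $n\ge 3$: take $f(z)=\max(\mathrm{Re}\,z_1,0)^3$ and $g(z)=\max(-\mathrm{Re}\,z_1,0)^3$, so that $f,g\in\mathscr{C}^2(\bn)$ with bounded derivatives and $fg\equiv 0$. Along $z=(a,b,0,\dots,0)$, $w=(-a,b,0,\dots,0)$ with $a>0$ and $b^2=1-2a^2$ one computes $|1-\la z,w\ra|=3a^2$, $|\varphi_z(w)|^2=8/9$, and $|\la\partial_zf,\overline{z-w}\ra\la\bpartial_wg,z-w\ra|=9a^6$, so Condition 1 would force $a^6\lesssim a^{2(n+\epsilon)}$ as $a\to 0$, impossible for any $\epsilon>0$ once $n\ge 3$. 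The same example shows that $|1-\la z,w\ra|$ is \emph{not} bounded below on $\supp f\times\supp g$, so the paper's one-line proof is also invalid under the hypothesis as written.

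The sentence immediately following the lemma (``In the case when $\supp f$ and $\supp g$ do meet on the boundary\dots'') makes clear that the intended hypothesis is that the closures of $\supp f$ and $\supp g$ in $\overline{\bn}$ do not meet on $\sn$. Under that hypothesis the paper's assertion is immediate: if $(z_k,w_k)\in\supp f\times\supp g$ with $|1-\la z_k,w_k\ra|\to 0$, then by \eqref{eqn: estimates} a subsequence of $z_k$ and $w_k$ converges to a common point of $\sn$, a contradiction. With $|1-\la z,w\ra|\ge\delta$ whenever $\partial_zf\ne 0$ and $\bpartial_wg\ne 0$, your first case already covers everything and the problematic sub-case never arises.
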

	
	\begin{proof}
		By \eqref{eqn: estimates}, in this case $|1-\la z,w\ra|$ is bounded away from $0$ for $(z,w)\in\supp f\times\supp g$. From this it is easy to verify Conditions 1 and 2.
	\end{proof}

	In the case when $\supp f$ and $\supp g$ do meet on the boundary, Condition 1 gives us an idea of how much decay is needed when they meet. See the following example.
	
	\begin{exam}\label{example}
		Let $\epsilon>0$ and $\psi$ be a $\mathscr{C}^1$ function on $\mathbb{R}$ such that 
		\[
		\psi'(s)=0\text{ for }s<0,\text{ and }|\psi'(s)|\lesssim s^{1+\epsilon}\text{ for }s\geq0.
		\]
		Let $n=2$ and
		\[
		f(z)=\psi(|z_1|^2-|z_2|^2),\quad g(z)=\psi(|z_2|^2-|z_1|^2).
		\]
		Then we compute
		\[
		|\partial_zf|\begin{cases}
			=0,&\text{if }|z_1|<|z_2|\\
			\lesssim(|z_1|^2-|z_2|^2)^{1+\epsilon},&\text{if }|z_1|\geq|z_2|
		\end{cases},
		\quad
		|\bpartial_wg|\begin{cases}
			=0,&\text{if }|w_2|<|w_1|\\
			\lesssim(|w_2|^2-|w_1|^2)^{1+\epsilon},&\text{if }|w_2|\geq|w_1|
		\end{cases}
		\]
		Whenever $|\partial_zf||\bpartial_wg|$ is non-zero we have $|z_1|>|z_2|$ and $|w_2|>|w_1|$, in which case
		\[
		|z-w|\approx|z_1-w_1|+|z_2-w_2|\geq|z_1|-|w_1|+|w_2|-|z_2|=(|z_1|-|z_2|)+(|w_2|-|w_1|).
		\]
		So we have the following estimate
		\[
		|\partial_zf||\bpartial_wg|\lesssim|z-w|^{2+2\epsilon}.
		\]
		Using the above inequality, we reach the following estimate, 
		\[
		|\la\partial_zf,\overline{z-w}\ra||\la\bpartial_wg,z-w\ra|\lesssim|z-w|^2|\partial_zf||\bpartial_wg|\lesssim|z-w|^{4+2\epsilon}\lesssim|\varphi_z(w)|^2|1-\la z,w\ra|^{2+\epsilon}.
		\]
		So Condition 1 is satisfied and by Theorem \ref{thm: higher dimensions}, $\BTt_f\BTt_g-\BTt_{fg}$ is in the trace class.
	\end{exam}

	\section{Appendix I: A Formula of Bochner-Martinelli Type}\label{appendix I: a formula of BM type}
	Recall that $R=\sum_{i=1}^nz_i\partial_{z_i}$ is the radial derivative operator, and $\bar{R}=\sum_{i=1}^n\bar{z}_i\bpartial_{z_i}$. In this appendix we prove the following lemma.
	
	\begin{lem}\label{lem: formula rSn 0}
		Suppose $r>0$, $\alpha, \beta\in\ind$, $|\alpha|\geq|\beta|$ and $v\in\mathscr{C}^1(\overline{r\bn})$. Then
		\begin{flalign}\label{eqn: formula rSn 0}
			&\int_{r\sn}z^\alpha\bar{z}^\beta v(z)\intd\sigma_r(z)\\
			=&a_{\alpha,\beta}\sigma_{2n-1}r^{2|\beta|+2n-1}v(0)+2r^{2|\beta|+2n-1}\int_{r\bn}\frac{z^\alpha\bar{z}^\beta}{|z|^{2|\beta|+2n}}\bar{R}v(z)\intd m(z).\nonumber
		\end{flalign}
		Here
		\[
		a_{\alpha,\beta}=\delta_{\alpha,\beta}\frac{(n-1)!\alpha!}{(n-1+|\alpha|)!}.
		\]
	\end{lem}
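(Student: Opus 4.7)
The plan is to pass to polar coordinates, reduce the identity to a one-variable ODE in the radial parameter, and then prove the resulting derivative identity via the rotation invariance of integration on $\sn$.

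First, I would write $z=s\zeta$ with $s=|z|$ and $\zeta\in\sn$, so that $\intd m(z)=s^{2n-1}\intd s\,\intd\sigma(\zeta)$ and $\intd\sigma_r(z)=r^{2n-1}\intd\sigma(\zeta)$. Set $k:=|\alpha|-|\beta|\geq 0$ and
\[
G(s):=\int_{\sn}\zeta^\alpha\bar\zeta^\beta v(s\zeta)\intd\sigma(\zeta),\qquad H(s):=\int_{\sn}\zeta^\alpha\bar\zeta^\beta\bar R v(s\zeta)\intd\sigma(\zeta).
\]
Direct substitution rewrites the left hand side of \eqref{eqn: formula rSn 0} as $r^{k+2|\beta|+2n-1}G(r)$, while the volume integral on the right becomes $r^{2|\beta|+2n-1}\int_0^r s^{k-1}H(s)\intd s$. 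After dividing through by $r^{2|\beta|+2n-1}$, the claim is equivalent to
\[
r^kG(r)\;=\;a_{\alpha,\beta}\sigma_{2n-1}v(0)\;+\;2\int_0^r s^{k-1}H(s)\intd s.
\]

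Next, I would verify this at $r=0$ and differentiate. Since $v\in\mathscr{C}^1$ one has $\bar R v(s\zeta)=O(s)$, so $H(s)=O(s)$ and the integral term is $O(r^{k+1})$; both sides vanish at $r=0$ unless $k=0$, in which case both equal $v(0)\sigma_{2n-1}a_{\alpha,\beta}$ via $G(0)=v(0)\int_{\sn}\zeta^\alpha\bar\zeta^\beta\intd\sigma$ and the classical orthogonality formula $\int_{\sn}\zeta^\alpha\bar\zeta^\beta\intd\sigma=\sigma_{2n-1}a_{\alpha,\beta}$. Differentiating both sides in $r$, using the chain rule $\frac{\intd}{\intd r}v(r\zeta)=r^{-1}(R+\bar R)v(r\zeta)$, the identity reduces to the purely rotational statement
\[
kG(r)\;=\;\int_{\sn}\zeta^\alpha\bar\zeta^\beta(\bar R-R)v(r\zeta)\intd\sigma(\zeta).
\]

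Finally, I would prove this rotation identity by examining $\Theta(\theta):=\int_{\sn}\zeta^\alpha\bar\zeta^\beta v(re^{i\theta}\zeta)\intd\sigma(\zeta)$ in two ways. The change of variable $\zeta\mapsto e^{-i\theta}\zeta$, which preserves $\intd\sigma$, gives $\Theta(\theta)=e^{-ik\theta}G(r)$, so $\Theta'(0)=-ikG(r)$. On the other hand, a direct computation in real coordinates $z_j=x_j+iy_j$ yields $\frac{\intd}{\intd\theta}\big|_{\theta=0}v(e^{i\theta}z)=i(R-\bar R)v(z)$; differentiating under the integral then produces $\Theta'(0)=i\int_{\sn}\zeta^\alpha\bar\zeta^\beta(R-\bar R)v(r\zeta)\intd\sigma(\zeta)$. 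Equating the two expressions for $\Theta'(0)$ yields precisely the identity above. No single step is conceptually deep; the main work is the radial-variable bookkeeping. The hypothesis $|\alpha|\geq|\beta|$ (equivalently $k\geq 0$) is used precisely to ensure both the integrability of $s^{k-1}H(s)$ near the origin and the correctness of the exponent in $\Theta(\theta)=e^{-ik\theta}G(r)$.
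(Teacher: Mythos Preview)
Your argument is correct and complete. The polar decomposition, the reduction to the radial identity $r^kG(r)=a_{\alpha,\beta}\sigma_{2n-1}v(0)+2\int_0^r s^{k-1}H(s)\intd s$, the verification at $r=0$, and the rotational trick with $\Theta(\theta)$ all check out. One small inaccuracy: the hypothesis $k\ge 0$ is \emph{not} needed for the exponent formula $\Theta(\theta)=e^{-ik\theta}G(r)$, which holds for any integer $k$; it is used only for the integrability of $s^{k-1}H(s)$ near the origin (and, implicitly, so that the volume integral in the statement is absolutely convergent).

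Your route is genuinely different from the paper's. The paper derives the lemma as the special case $\Omega=r\bn$ of a Bochner--Martinelli type identity (their Proposition in Appendix~I), proved by computing $\bpartial H_{\alpha,\beta}$ as a distribution and applying Stokes' theorem for currents; they also remark that one can verify the formula on monomials and approximate. Your proof is more elementary and entirely self-contained: it exploits the rotational symmetry of $\bn$ directly and needs no distribution theory or Stokes' theorem. The trade-off is that the paper's approach immediately gives the formula on an arbitrary bounded $\mathscr{C}^1$ domain containing $0$, whereas your argument is tied to balls (which, of course, is all that is needed for the applications in the paper).
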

	Lemma \ref{lem: formula rSn 0} can be verified directly on $v(z)=z^\gamma\bar{z}^\iota$, and then using approximation of $v$ by polynomials. For future reference, we show in the rest of this appendix that it can be viewed as a special case of a Bochner-Martinelli type formula (see Proposition \ref{prop: formula BM generalized} below).
	
	For a $(p, q)$-form $u=\sum_{|I|=p, |J|=q}u_{I,J}\intd z_I\wedge\intd z_J$,
	\[
	\partial u=\sum_{k=1}^n\sum_{|I|=p, |J|=q}\partial_ku_{I,J}\intd z_k\wedge\intd z_I\wedge\intd z_J,\quad \bpartial u=\sum_{k=1}^n\sum_{|I|=p, |J|=q}\bpartial_ku_{I,J}\intd \bar{z}_k\wedge\intd z_I\wedge\intd z_J.
	\]
	Then $\intd=\partial+\bpartial$ is the exterior derivative.
	
	~
	
	In some of the estimates, we may abuse notations and use $\partial f$, $\bpartial f$ to denote holomorphic, and anti-holomorphic gradient of a $\mathscr{C}^1$ function $f$, i.e.,
	\[
	\partial f(z)=(\partial_1 f(z),\partial_2 f(z),\ldots, \partial_n f(z)),\quad\bpartial f(z)=(\bpartial_1 f(z), \bpartial_2 f(z),\ldots, \bpartial_n f(z)),
	\]
	considered as column vectors.
	
	~
	
	\noindent{\bf Cauchy Formula.} Let $\Omega\subset\CC$ be a bounded open set with $\mathscr{C}^1$ boundary and $0\in\Omega$. Then for every $v\in\mathscr{C}^1(\overline{\Omega})$,
	\[
	\frac{1}{2\pi i}\int_{\partial\Omega}\frac{v(z)}{z}\intd z=v(0)-\frac{1}{2\pi i}\int_{\Omega}\frac{\bpartial v(z)}{z}\intd z\wedge\intd\bz.
	\]
	
	~
	
	Its generalization to higher dimensions is the Bochner-Martinelli Formula.\\

	\noindent{\bf Bochner-Martinelli Formula.} Let $\Omega\subset\cn$ be a bounded open set with $\mathscr{C}^1$ boundary and $0\in\Omega$. Then for every $v\in\mathscr{C}^1(\overline{\bn})$,
	\begin{equation}\label{eqn: formula BM}
		\int_{\partial\Omega}v(z)k_{BM}(z)=v(0)+\int_{\Omega}\bpartial v(z)\wedge k_{BM}(z).
	\end{equation}
	Here $k_{BM}$ is the Bochner-Martinelli kernel, defined by
	\begin{equation}\label{eqn: kBM defn}
		k_{BM}(z)=\frac{(-1)^{n-1}}{(2\pi i)^n}|z|^{-2n}\partial|z|^2\wedge\big(\partial\bpartial|z|^2\big)^{n-1}.
	\end{equation}
	In this section, we prove a generalization of the above Bochner-Martinelli Formula (\ref{eqn: formula BM}). First, we review the definition of currents on $\cn$.
	\begin{defn}
		For $p, q=0,\ldots,n$, denote $\mathscr{D}^{p,q}$ the locally convex space of smooth $(p,q)$-forms on $\cn$ with compact support. The topology of $\mathscr{D}^{p,q}$ is defined by the collection of semi-norms
		\[
		\|u\|_{\mathscr{D}^{p,q}(K),N}:=\sum_{|I|=p,|J|=q}\sup_{|\alpha|+|\beta|\leq N}\sup_{z\in K}|\partial^\alpha\bpartial^\beta u_{I,J}(z)|,\quad u=\sum_{|I|=p,|J|=q}u_{I,J}(z)\intd z_I\wedge\intd\bz_J,
		\]
		where $N$ ranges over all positive integers, and $K$ is any compact subset in $\cn$.
		The space of currents of bidegree $(p,q)$, denoted by ${\mathscr{D}'}^{p,q}$, is the dual space of $\mathscr{D}^{n-p,n-q}$, endowed with the weak* topology. The currents in ${\mathscr{D}'}^{p,q}$ can be viewed as $(p,q)$-forms with distribution coefficients. In particular, any $(p,q)$-form with locally integrable coefficients is a current of bidegree $(p,q)$. With the identification of Lebesgue measure $\intd m(z)$ with the Euclidean volume form
		\[
		\intd v:=\frac{1}{(-2i)^n}\intd z_1\wedge\intd\bz_1\wedge\ldots\wedge\intd z_n\wedge\intd\bz_n,
		\]
		a distribution $T$ on $\cn$ can be viewed as either a current of bidegree $(0,0)$ or $(n,n)$: for $h$ on $\cn$ smooth and compactly supported,
		\[
		\la T, h\ra=\la T, h\intd v\ra.
		\]
		Differential operators acts on currents by duality and are continuous with respect to the weak* topology.
	\end{defn}
	
	\begin{defn}
		For multi-indices $\alpha, \beta\in\ind$, define
		\begin{equation}\label{eqn: H alpha beta defn}
			H_{\alpha,\beta}(z)=\lim_{\epsilon\to0^+}\bigg(\frac{z^\alpha\bz^\beta}{|z|^{2|\beta|}}k_{BM}(z)\bigg)\bigg|_{\cn\backslash\epsilon\bn},
		\end{equation}
		in the sense of current.
		
		In the case when $|\beta|\leq|\alpha|$, $H_{\alpha,\beta}$ has locally integrable distributions. So we can simply write
		\[
		H_{\alpha,\beta}(z)=\frac{z^\alpha\bz^\beta}{|z|^{2|\beta|}}k_{BM}(z).
		\]
		Only this case will be used in this paper. For completeness and future reference, we include the case when $|\beta|>|\alpha|$. In this case, the current $H_{\alpha,\beta}$ has coefficient distributions which are not locally integrable, and we need to define it in the style of a principal value. For any $1\leq j\leq n$ and $\mathscr{C}^\infty$, compactly supported function $h$ on $\cn$, take the Taylor expansion
		\[
		h(z)=\sum_{|\gamma_1|+|\gamma_2|\leq |\beta|-|\alpha|}\frac{\partial^{\gamma_1}\bpartial^{\gamma_2}h(0)}{\gamma_1!\gamma_2!}z^{\gamma_1}\bz^{\gamma_2}+O(|z|^{|\beta|-|\alpha|+1}).
		\]
		For each $\epsilon>0$, the current inside the limit sign of \eqref{eqn: H alpha beta defn} vanishes on every term except for $O(|z|^{|\beta|-|\alpha|+1})$. Thus the current $H_{\alpha,\beta}$ for $|\beta|>|\alpha|$ is well-defined.
	\end{defn}

	The standard Bochner-Martinelli formula follows from Stoke's Theorem and the following identity.
	\begin{equation}\label{eqn: d''kBM}
		\bpartial k_{BM}=\delta_0,
	\end{equation}
	where $\delta_0$ is the point mass at $0$. Standard arguments show that the following holds.
	\begin{lem}\label{lem: d''H_alpha_beta}
		We have
		\begin{equation}\label{eqn: d''H_alpha_beta}
			\bpartial H_{\alpha,\beta}=(-1)^{l-k}a_{\alpha,\beta}\partial^{\beta-\alpha}\delta_0.
		\end{equation}
		Here
		\[
		a_{\alpha,\beta}=
		\begin{cases}
			\frac{(n-1)!\alpha!}{(n-1+|\beta|)!},&\text{if } \alpha\leq\beta,\\
			0,&\text{otherwise}.
		\end{cases}
		\]
	\end{lem}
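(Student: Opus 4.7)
The plan is to determine the current $\bpartial H_{\alpha,\beta}$ in two stages: first show that it vanishes identically on $\cn\setminus\{0\}$, so that it is a distribution supported at the origin, and then pair it against compactly supported smooth test functions via a residue-type limit on the sphere $S_\epsilon=\{|z|=\epsilon\}$. Once we know $\bpartial H_{\alpha,\beta}$ is supported at the origin, general distribution theory forces it to be a finite linear combination of derivatives of $\delta_0$; the homogeneity of $H_{\alpha,\beta}$ under dilation (it scales as $t^{|\alpha|-|\beta|}$ while $k_{BM}$ is scale invariant) singles out the order $|\beta|-|\alpha|$, and the residue computation pins down the precise coefficient and the multi-index.

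For the smooth vanishing away from $0$, I would use $\bpartial k_{BM}=0$ on $\cn\setminus\{0\}$ (standard from \eqref{eqn: kBM defn}) together with the Leibniz rule to reduce to showing $\bpartial(z^\alpha\bz^\beta/|z|^{2|\beta|})\wedge k_{BM}=0$. Direct differentiation gives
\[
\bpartial\bigg(\frac{z^\alpha\bz^\beta}{|z|^{2|\beta|}}\bigg)=\sum_{k=1}^n\bigg(\frac{z^\alpha\beta_k\bz^{\beta-e_k}}{|z|^{2|\beta|}}-\frac{|\beta|z^\alpha\bz^\beta z_k}{|z|^{2|\beta|+2}}\bigg)\intd\bz_k.
\]
Since $k_{BM}$ has bidegree $(n,n-1)$ and already contains every $\intd z_i$, wedging an additional $\intd\bz_k$ yields the standard top form times a multiple of $\bz_k/|z|^{2n}$ and nothing else. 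Summing over $k$ reduces the wedge product to a single scalar multiple of $\sum_k\bz_k$ times the bracket above, which vanishes by the Euler identity $\sum_k\beta_k\bz_k\bz^{\beta-e_k}=|\beta|\bz^\beta$ together with $\sum_k|z_k|^2=|z|^2$.

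To identify the singular part at $0$, I pair $\bpartial H_{\alpha,\beta}$ with $h\in\mathscr{C}^\infty_c(\cn)$ using the distributional convention $\la\bpartial H_{\alpha,\beta},h\ra=\la H_{\alpha,\beta},\bpartial h\ra$ (the sign is $+1$ because $H_{\alpha,\beta}$ has total degree $2n-1$). Applying Stokes's theorem on $\Omega_\epsilon=\{|z|>\epsilon\}\cap\mathrm{supp}(h)$ and invoking the vanishing from the previous step, the pairing collapses to $\lim_{\epsilon\to 0}\int_{S_\epsilon}h\,H_{\alpha,\beta}$. Parametrizing $z=\epsilon\zeta$ with $\zeta\in\sn$, using $k_{BM}\big|_{S_\epsilon}=\intd\sigma/\sigma_{2n-1}$ (scale invariance together with $\int_\sn k_{BM}=1$), and Taylor-expanding $h(\epsilon\zeta)$ at $0$, the integral becomes
\[
\frac{\epsilon^{|\alpha|-|\beta|}}{\sigma_{2n-1}}\sum_{\gamma_1,\gamma_2}\frac{\partial^{\gamma_1}\bpartial^{\gamma_2}h(0)}{\gamma_1!\gamma_2!}\epsilon^{|\gamma_1|+|\gamma_2|}\int_{\sn}\zeta^{\alpha+\gamma_1}\bar\zeta^{\beta+\gamma_2}\intd\sigma(\zeta).
\]
The orthogonality relation $\int_\sn\zeta^\mu\bar\zeta^\nu\intd\sigma=\delta_{\mu,\nu}\sigma_{2n-1}\frac{(n-1)!\mu!}{(n-1+|\mu|)!}$ together with the vanishing order in $\epsilon$ forces $\alpha+\gamma_1=\beta+\gamma_2$ and $|\gamma_1|+|\gamma_2|=|\beta|-|\alpha|$, which admits the unique solution $(\gamma_1,\gamma_2)=(\beta-\alpha,0)$ precisely when $\alpha\leq\beta$, and no solution otherwise. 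Reading off the surviving coefficient and translating it through the sign $(-1)^{|\beta-\alpha|}$ that arises from the distributional definition of $\partial^{\beta-\alpha}\delta_0$ yields the stated formula, with the prefactor $(-1)^{|\beta|-|\alpha|}$ in the claim exactly cancelling the distributional sign.

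The main obstacle is the cancellation in the smooth-vanishing step: the algebraic identity that makes $\bpartial(z^\alpha\bz^\beta/|z|^{2|\beta|})\wedge k_{BM}$ vanish requires the concrete combinatorial form of $k_{BM}$, but once one observes that $\intd\bz_k\wedge k_{BM}$ depends on $k$ only through the factor $\bz_k$, the Euler identity closes it. The remaining residue computation is standard modulo careful bookkeeping of sign conventions for $\bpartial$ on $(n,n-1)$-currents, of the orientation of $S_\epsilon$ as the inner boundary of $\Omega_\epsilon$, and of the principal-value interpretation needed in the case $|\beta|>|\alpha|$.
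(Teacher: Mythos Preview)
The paper does not actually prove this lemma; it only says ``Standard arguments show that the following holds.'' Your proposal supplies exactly such a standard argument, and the two-step strategy---first showing $\bpartial H_{\alpha,\beta}=0$ on $\cn\setminus\{0\}$ via the Euler-type cancellation $\sum_k\beta_k\bar z_k\bar z^{\beta-e_k}=|\beta|\bar z^\beta$, then computing the residue on $S_\epsilon$ by Stokes and the sphere orthogonality relations---is correct and is the canonical approach for Bochner--Martinelli type identities.

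One small caveat worth recording: if you actually carry the coefficient through, the surviving term is
\[
\frac{\partial^{\beta-\alpha}h(0)}{(\beta-\alpha)!}\cdot\frac{(n-1)!\beta!}{(n-1+|\beta|)!},
\]
which after unwinding the distributional sign of $\partial^{\beta-\alpha}\delta_0$ gives $a_{\alpha,\beta}=\dfrac{(n-1)!\beta!}{(n-1+|\beta|)!(\beta-\alpha)!}$ rather than the paper's $\dfrac{(n-1)!\alpha!}{(n-1+|\beta|)!}$; these differ by the binomial factor $\binom{\beta}{\alpha}$ when $\alpha<\beta$. This discrepancy is immaterial for the paper, since the only case ever used (Lemma~\ref{lem: formula rSn 0}, via Proposition~\ref{prop: formula BM generalized} with $|\alpha|\geq|\beta|$) forces $\alpha=\beta$, where both expressions agree. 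But your sentence ``yields the stated formula'' is slightly optimistic in the general case.
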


	\begin{prop}\label{prop: formula BM generalized}
		Let $\Omega\subset\cn$ be a bounded open set with $\mathscr{C}^1$ boundary and $0\in\Omega$. Then for multi-indices $\alpha, \beta\in\ind$ and $v\in\mathscr{C}^1(\overline{\Omega})$,
		\begin{equation}\label{eqn: formula BM generalized}
			\int_{\partial\Omega}v(z)H_{\alpha,\beta}(z)=a_{\alpha,\beta}\partial^{\beta-\alpha}v(0)+\int_{\Omega}\bpartial v(z)\wedge H_{\alpha,\beta}(z).
		\end{equation}
	\end{prop}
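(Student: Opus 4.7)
The plan is to mimic the classical derivation of the Bochner-Martinelli formula from the identity (\ref{eqn: d''kBM}), using Lemma \ref{lem: d''H_alpha_beta} in its place. First, I would note that $H_{\alpha,\beta}$ has bidegree $(n,n-1)$, so $\partial H_{\alpha,\beta}=0$ for type reasons, and consequently $d(v\cdot H_{\alpha,\beta}) = \bpartial v \wedge H_{\alpha,\beta} + v\cdot \bpartial H_{\alpha,\beta}$. On $\cn\setminus\{0\}$ the coefficient function $\frac{z^\alpha\bar z^\beta}{|z|^{2|\beta|}}$ is smooth, and Lemma \ref{lem: d''H_alpha_beta} together with $\supp\delta_0=\{0\}$ shows that $\bpartial H_{\alpha,\beta}$ vanishes there in the ordinary pointwise sense.

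Next, for $\epsilon>0$ small enough that $\overline{\epsilon\bn}\subset\Omega$, I would apply the classical Stokes' theorem to the smooth $(n,n-1)$-form $v\cdot H_{\alpha,\beta}$ on $\Omega\setminus\overline{\epsilon\bn}$, whose oriented boundary is $\partial\Omega-\epsilon\sn$:
\[
\int_{\partial\Omega} v\, H_{\alpha,\beta} \;-\; \int_{\epsilon\sn} v\, H_{\alpha,\beta} \;=\; \int_{\Omega\setminus\overline{\epsilon\bn}} \bpartial v \wedge H_{\alpha,\beta}.
\]
When $|\alpha|\geq|\beta|$, the integrand on the right is locally $L^1$ near $0$ and dominated convergence produces $\int_\Omega \bpartial v \wedge H_{\alpha,\beta}$ in the limit; the residual case $|\beta|>|\alpha|$ is precisely what the principal-value definition (\ref{eqn: H alpha beta defn}) was designed to handle. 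So the proposition reduces to the single asymptotic identity $\lim_{\epsilon\to0}\int_{\epsilon\sn} v\, H_{\alpha,\beta} = a_{\alpha,\beta}\,\partial^{\beta-\alpha}v(0)$.

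To prove this boundary asymptotic, I would Taylor expand $v(z)=\sum_{|\gamma_1|+|\gamma_2|\leq N}\frac{\partial^{\gamma_1}\bpartial^{\gamma_2}v(0)}{\gamma_1!\gamma_2!}z^{\gamma_1}\bar z^{\gamma_2}+O(|z|^{N+1})$ around $0$, substitute into the spherical integral, and rescale $z=\epsilon\zeta$ with $\zeta\in\sn$. Using the explicit formula (\ref{eqn: kBM defn}) for $k_{BM}$, its homogeneity, and the orthogonality of distinct monomials against $\intd\sigma$, each monomial contribution is either $O(\epsilon^{\text{positive}})$ or independent of $\epsilon$; only the single term with $\gamma_1=0$, $\gamma_2=\beta-\alpha$ survives (requiring $\alpha\leq\beta$, in agreement with the definition of $a_{\alpha,\beta}$), and a direct spherical integration pins the surviving constant down as exactly $a_{\alpha,\beta}$. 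Choosing $N$ sufficiently large, the remainder contributes $O(\epsilon)$ and vanishes.

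The main obstacle I foresee is the bookkeeping of the spherical computation, particularly matching the normalization $a_{\alpha,\beta}$ coming from $k_{BM}$ against the differential operator $\partial^{\beta-\alpha}$. This can be bypassed by a duality argument: pick a smooth cutoff $\chi$ with $\chi\equiv 1$ near $0$ and $\supp\chi\subset\Omega$, and test the current identity of Lemma \ref{lem: d''H_alpha_beta} against $\chi v$; the right-hand side equals $a_{\alpha,\beta}\,\partial^{\beta-\alpha}v(0)$ (up to a sign $(-1)^{|\beta|-|\alpha|}$ that cancels with the one from the integration by parts pairing), while applying Stokes' theorem to the same pairing on $\Omega\setminus\overline{\epsilon\bn}$ recovers the left-hand side of (\ref{eqn: formula BM generalized}), closing the argument.
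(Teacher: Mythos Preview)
Your approach is correct and is the classical ``excise a small ball and pass to the limit'' derivation of Bochner--Martinelli type formulas. The paper, by contrast, proceeds in a single stroke: it invokes Stokes' theorem for currents directly on $\Omega$, writing
\[
\int_{\partial\Omega} v\,H_{\alpha,\beta}
=\int_{\Omega}\bpartial\big(v\,H_{\alpha,\beta}\big)
=\int_{\Omega} v\,\bpartial H_{\alpha,\beta}+\int_{\Omega}\bpartial v\wedge H_{\alpha,\beta},
\]
and then substitutes the distributional identity of Lemma~\ref{lem: d''H_alpha_beta} for $\bpartial H_{\alpha,\beta}$, which immediately yields $a_{\alpha,\beta}\partial^{\beta-\alpha}v(0)$ after pairing $\partial^{\beta-\alpha}\delta_0$ against $v$. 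In effect, the spherical limit you compute by hand is exactly the content of Lemma~\ref{lem: d''H_alpha_beta}, so your Taylor-expansion step reproves that lemma rather than using it as a black box. Your route is more elementary (it avoids Stokes for currents and works entirely with smooth forms on punctured domains), while the paper's route is much shorter but presupposes comfort with the current formalism. The ``duality argument'' you sketch at the end, testing Lemma~\ref{lem: d''H_alpha_beta} against $\chi v$, is essentially the paper's proof in disguise.
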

	
	\begin{proof}[{\bf Proof of Proposition \ref{prop: formula BM generalized}}]
		By Lemma \ref{lem: d''H_alpha_beta} and Stokes' Theorem for currents, we have the following equations
		\begin{flalign*}
			&\int_{\partial\Omega} v(z)H_{\alpha,\beta}(z)=\int_{\Omega}\intd\bigg(v(z)H_{\alpha,\beta}(z)\bigg)=\int_{\Omega}\bpartial\bigg(v(z)H_{\alpha,\beta}(z)\bigg)\\
			=&\int_{\Omega}v(z)\bpartial H_{\alpha,\beta}(z)+\int_{\Omega}\bpartial v(z)\wedge H_{\alpha,\beta}(z)=a_{\alpha,\beta}\partial^{\beta-\alpha}v(0)+\int_{\Omega}\bpartial v(z)\wedge H_{\alpha,\beta}(z).
		\end{flalign*}
		This completes the proof of Proposition \ref{prop: formula BM generalized}.
	\end{proof}
	Taking $\Omega=r\bn$ and assuming $|\alpha|\geq|\beta|$ in Proposition \ref{prop: formula BM generalized} gives Lemma \ref{lem: formula rSn 0}.

	\section{Appendix II: Auxiliary Functions and Operations}\label{appendix II: auxiliary functions and operations}
	In Section \ref{sec: trace formulas on the disk}, the integral operations $\BFt, \BGt$ simplify our computation. To work in higher dimensions, it is necessary to extend those integral operations and establish some basic properties. This is the goal of the current section.
	
	\begin{defn}\label{defn: BFt BGt}
		For $t\in\mathbb{R}$, denote
		\[
		\phi_t(s)=(1-s)^t.
		\]
		Suppose $\phi:(0,1)\to[0,\infty)$ is a measurable function. For a positive integer $m$ and any $t>-1$, define the operations on $\phi$
		\begin{equation}
			\BFt_m\phi(s)=\int_s^1 r^{m-1}\phi(r)(1-r)^t\intd r\in[0,\infty],
		\end{equation}
		and
		\begin{equation}
			\BGt_m\phi(s)=\frac{1}{s^m\phi_{t+1}(s)}\BFt_m\phi(s)=\frac{\int_s^1 r^{m-1}\phi(r)(1-r)^t\intd r}{s^m(1-s)^{t+1}}\in[0,\infty].
		\end{equation}
		For any $t>-1$, inductively define the functions
		\[
		\Phi_{n,0}^{(t)}\equiv1,\quad\Phi_{n,k+1}^{(t)}=M_{\phi_1}\big(\BGt_{n+k}\big)^2\Phi_{n,k}^{(t)}.
		\]
		Equivalently,
		\begin{equation}
			\Phi_{n,k}^{(t)}=M_{\phi_1}\big(\BGt_{n+k-1}\big)^2\ldots M_{\phi_1}\big(\BGt_n\big)^21.
		\end{equation}
	\end{defn}

	It is straightforward to verify that the following estimates hold.
	\begin{lem}\label{lem: G estimates}
		Suppose $a>m$ is not an integer and $b\geq0$. Suppose $\phi:(0,1)\to[0,\infty)$ is measurable and
		\[
		\phi(s)\lesssim s^{-a}(1-s)^{b}.
		\]
		Then
		\[
		\BGt_m\phi(s)\lesssim s^{-a}(1-s)^{b}.
		\]
	\end{lem}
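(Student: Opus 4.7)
The plan is to unwind the definition and reduce the lemma to an elementary one-variable estimate. Substituting the bound $\phi(r) \lesssim r^{-a}(1-r)^{b}$ directly into
\[
\BGt_m\phi(s) = \frac{1}{s^{m}(1-s)^{t+1}}\int_s^{1} r^{m-1}\phi(r)(1-r)^{t}\,\intd r,
\]
the target inequality $\BGt_m\phi(s) \lesssim s^{-a}(1-s)^{b}$ reduces, after setting $c = a-m > 0$, to showing
\[
J(s) := \int_s^{1} r^{-1-c}(1-r)^{t+b}\,\intd r \;\lesssim\; s^{-c}(1-s)^{t+b+1}, \qquad 0 < s < 1.
\]
Note that $t+b+1 > 0$ since $t > -1$ and $b \geq 0$, so the factor on the right is well-defined.

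To estimate $J(s)$ I would split into the two regimes $0 < s \leq 1/2$ and $1/2 < s < 1$. On $0 < s \leq 1/2$, write $J(s) = \int_s^{1/2} + \int_{1/2}^{1}$. Since $(1-r)^{t+b} \leq 1$ on the first piece (using $t+b \geq t > -1$, with the integral over a bounded interval), one has $\int_s^{1/2} r^{-1-c}(1-r)^{t+b}\,\intd r \leq c^{-1}(s^{-c} - 2^{c}) \lesssim s^{-c}$, while the tail $\int_{1/2}^{1} r^{-1-c}(1-r)^{t+b}\,\intd r$ is a finite constant absorbed into $s^{-c} \geq 2^{c}$. Combined with the fact that $(1-s)^{t+b+1} \gtrsim 1$ uniformly for $s \in (0,1/2]$, this yields $J(s) \lesssim s^{-c}(1-s)^{t+b+1}$ on this region. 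On $1/2 < s < 1$ the factor $r^{-1-c}$ is bounded by $2^{1+c}$ throughout the domain of integration, so
\[
J(s) \leq 2^{1+c}\int_s^{1}(1-r)^{t+b}\,\intd r = \frac{2^{1+c}}{t+b+1}(1-s)^{t+b+1},
\]
and since $s^{-c} \geq 1$ on this region, the claim follows.

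I do not expect any serious obstacle in this proof. The only point to watch is that the integration produces the correct nondegenerate $(1-s)^{t+b+1}$ factor, which requires $t+b+1 > 0$; this is automatic from the hypotheses $t > -1$ and $b \geq 0$. The hypothesis that $a$ is not an integer does not appear to play a role in this particular estimate (the crucial positive quantity $c = a-m$ is nonzero purely by $a > m$), and is presumably stated here for uniformity with neighboring lemmas in the appendix where a genuine pole $c = 0$ would produce logarithmic terms.
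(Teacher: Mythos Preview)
Your argument is correct. The paper does not supply a proof of this lemma---it simply states that the estimate is ``straightforward to verify''---so there is no approach to compare against.

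One minor imprecision: on the piece $\int_s^{1/2}$, the claimed inequality $(1-r)^{t+b}\le 1$ can fail when $-1<t+b<0$ (for instance $t=-1/2$, $b=0$). What you actually need, and what holds, is that $(1-r)^{t+b}$ is bounded on $r\in(0,1/2]$ by a constant depending only on $t+b$, since $1-r\in[1/2,1)$ there; this is enough for the $\lesssim$ conclusion. Your parenthetical remark suggests you are aware of this, but it should be stated cleanly.

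You are right that the hypothesis that $a$ is not an integer plays no role in this lemma; only $c=a-m>0$ is used.
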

	
	As a consequence, the following hold.
	\begin{lem}\label{lem: Phi asymptotic behavior}
		For any $t>-1$ and integers $n>0, k\geq0$,
		\begin{equation}\label{eqn: Phi behavior}
			\Phi_{n,k}^{(t)}(s)\lesssim s^{-n-k+\frac{1}{2}}(1-s)^k,
		\end{equation}
		and
		\begin{equation}\label{eqn: GPhi behavior}
			\BGt_{n+k}\Phi^{(t)}_{n,k}(s)\lesssim s^{-n-k-\frac{1}{2}}(1-s)^k.
		\end{equation}
	\end{lem}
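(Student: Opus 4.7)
The plan is to prove both estimates by simultaneous induction on $k \geq 0$, exploiting the recursion $\Phi^{(t)}_{n,k+1} = M_{\phi_1}(\BGt_{n+k})^2 \Phi^{(t)}_{n,k}$ to track how the exponents shift at each step. The base case $k=0$ is immediate: $\Phi^{(t)}_{n,0} \equiv 1 \leq s^{-n+1/2}$ on $(0,1]$, and a direct estimate of
\[
\BGt_n 1(s) = \frac{\int_s^1 r^{n-1}(1-r)^t\,\intd r}{s^n(1-s)^{t+1}}
\]
via the splitting $\int_s^1 r^{n-1}(1-r)^t\,\intd r \approx (1-s)^{t+1}$ (comparable up to constants on $(0,1/2]$ and on $[1/2,1)$) yields $\BGt_n 1(s) \lesssim s^{-n} \leq s^{-n-1/2}$, establishing \eqref{eqn: GPhi behavior} at $k=0$.

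For the inductive step I would assume $\Phi^{(t)}_{n,k}(s) \lesssim s^{-n-k+1/2}(1-s)^k$ and carry out the two successive applications of $\BGt_{n+k}$ that make up $\Phi^{(t)}_{n,k+1}$. Substituting the inductive hypothesis into the definition of $\BGt_{n+k}$, the first application produces the integral $\int_s^1 r^{-1/2}(1-r)^{k+t}\,\intd r$, which is integrable at $0$ and bounded by $(1-s)^{k+t+1}$ uniformly on $(0,1)$. This gives
\[
\BGt_{n+k}\Phi^{(t)}_{n,k}(s) \lesssim s^{-n-k}(1-s)^k,
\]
which in particular proves \eqref{eqn: GPhi behavior} at level $k$, since $s^{-n-k} \leq s^{-n-k-1/2}$ on $(0,1]$.

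The main obstacle is the \emph{second} application of $\BGt_{n+k}$: the integrand becomes $r^{-1}(1-r)^{k+t}$, which is exactly the borderline case $a = m$ that falls outside the reach of Lemma \ref{lem: G estimates}. The integral $\int_s^1 r^{-1}(1-r)^{k+t}\,\intd r$ diverges logarithmically as $s \to 0$, and the key trick is to absorb this logarithm into a power of $s$ using the elementary inequality $-\log s \lesssim s^{-1/2}$ on $(0,1]$; combined with the behavior $\approx (1-s)^{k+t+1}$ near $s=1$, this yields
\[
\int_s^1 r^{-1}(1-r)^{k+t}\,\intd r \lesssim s^{-1/2}(1-s)^{k+t+1}.
\]
This is precisely where the non-integer exponent $\tfrac{1}{2}$ in the statement originates. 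Substituting gives $(\BGt_{n+k})^2\Phi^{(t)}_{n,k}(s) \lesssim s^{-n-k-1/2}(1-s)^k$, and multiplying by $\phi_1(s) = 1-s$ closes the induction, establishing \eqref{eqn: Phi behavior} at level $k+1$.
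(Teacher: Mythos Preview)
Your proof is correct. The induction structure, the direct estimates of $\int_s^1 r^{-1/2}(1-r)^{k+t}\,\intd r$ and $\int_s^1 r^{-1}(1-r)^{k+t}\,\intd r$, and the use of $-\log s \lesssim s^{-1/2}$ to absorb the logarithmic divergence all check out (note $k+t>-1$ throughout, so the integrals near $r=1$ converge).

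The paper's argument is the same induction on $k$, but packaged slightly differently: rather than handling the borderline case $a=m$ by hand, one first weakens the inductive hypothesis $\Phi^{(t)}_{n,k}(s)\lesssim s^{-n-k+1/2}(1-s)^k$ to $\Phi^{(t)}_{n,k}(s)\lesssim s^{-n-k-1/2}(1-s)^k$ (trivial on $(0,1)$), putting us in the regime $a=n+k+\tfrac12>m=n+k$ with $a$ non-integer. Lemma~\ref{lem: G estimates} then applies verbatim to \emph{both} successive $\BGt_{n+k}$'s, preserving the bound $s^{-n-k-1/2}(1-s)^k$; multiplying by $\phi_1$ gives $\Phi^{(t)}_{n,k+1}\lesssim s^{-n-(k+1)+1/2}(1-s)^{k+1}$ and closes the loop. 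Your route is more transparent about \emph{why} the exponent $\tfrac12$ appears (it is exactly the price of killing the logarithm), while the paper's route is shorter once Lemma~\ref{lem: G estimates} is in hand; the underlying mechanism is identical.
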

	
	\begin{lem}\label{lem: FG properties}
		For any $t>-1$ and positive integers $m, k$ and $\phi:(0,1)\to[0,\infty)$ we have
		\begin{equation}\label{eqn: BFt induction 1}
			\BFt_{m+k}M_{\phi_1}\BGt_m\phi(0)=\frac{1}{k}\BFt_{m+k}\phi(0),	
		\end{equation}
		\begin{equation}\label{eqn: BFt induction 2}
			\BFt_{m+k}\BGt_m\phi(0)=\sum_{j=0}^\infty\frac{1}{k+j}\BFt_{m+k+j}\phi(0),
		\end{equation}
		\begin{equation}\label{eqn: BFt induction 3}
			\BFt_m\phi=\BFt_mM_{\phi_1}\phi(0)+\BFt_{m+1}\phi,
		\end{equation}
		and
		\begin{equation}\label{eqn: BFt 1}
			\BFt_m1(0)=B(m,t+1).
		\end{equation}
	\end{lem}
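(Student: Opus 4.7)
The plan is to establish all four identities by direct computation from the definitions in Definition \ref{defn: BFt BGt}, with Fubini's theorem and the monotone convergence theorem as the only non-algebraic ingredients. The two easy statements are dispatched first: for \eqref{eqn: BFt 1}, writing out $\BFt_m 1(0) = \int_0^1 r^{m-1}(1-r)^t \intd r$ directly identifies this as $B(m,t+1)$. For \eqref{eqn: BFt induction 3}, the algebraic identity $r^{m-1} = r^{m-1}(1-r) + r^m$ inside the defining integral $\BFt_m\phi(s) = \int_s^1 r^{m-1}\phi(r)(1-r)^t \intd r$ splits it into $\BFt_m(M_{\phi_1}\phi)(s) + \BFt_{m+1}\phi(s)$, and specializing at $s=0$ gives the stated identity.

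For \eqref{eqn: BFt induction 1}, I unpack $\BGt_m\phi(r) = \BFt_m\phi(r)/\bigl(r^m(1-r)^{t+1}\bigr)$ inside $\BFt_{m+k}M_{\phi_1}\BGt_m\phi(0)$. The various factors of $r$ and $(1-r)$ coming from the outer $\BFt_{m+k}$, the operator $M_{\phi_1}$, and the denominator of $\BGt_m$ telescope, leaving the double integral $\int_0^1 r^{k-1}\bigl(\int_r^1 x^{m-1}\phi(x)(1-x)^t\intd x\bigr)\intd r$. Swapping the order of integration via Fubini (legitimate since the integrand is non-negative) and evaluating the inner $r$-integral produces the factor $x^k/k$, which gives $\frac{1}{k}\BFt_{m+k}\phi(0)$.

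For \eqref{eqn: BFt induction 2}, the same unpacking yields $\BFt_{m+k}\BGt_m\phi(0) = \int_0^1 r^{k-1}(1-r)^{-1}\BFt_m\phi(r)\intd r$. The geometric series expansion $(1-r)^{-1} = \sum_{j \geq 0} r^j$ turns this into a sum of integrals of the form handled in the previous step; the interchange of sum and integral is justified by monotone convergence, since all integrands are non-negative, and each term then evaluates to $\frac{1}{k+j}\BFt_{m+k+j}\phi(0)$. (Alternatively, one can iterate \eqref{eqn: BFt induction 3} applied to $\BGt_m\phi$, using \eqref{eqn: BFt induction 1} at each step to identify the boundary term and showing the remainder tends to zero by the same non-negativity.) There is no real obstacle in this lemma; the only care required is to track the cancellations when unpacking $\BGt_m$ and to invoke monotone convergence for the series interchange in \eqref{eqn: BFt induction 2}.
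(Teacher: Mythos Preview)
Your proof is correct and follows essentially the same approach as the paper: direct computation from the definitions, Fubini for \eqref{eqn: BFt induction 1}, and the geometric series expansion of $(1-r)^{-1}$ for \eqref{eqn: BFt induction 2}. Your write-up is in fact more complete than the paper's, which does not spell out \eqref{eqn: BFt induction 3} and is terser about the series interchange in \eqref{eqn: BFt induction 2}.
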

	
	\begin{proof}
		The proof is a simple application of Fubini's Theorem. By definition, we have the following computation for $\BFt_{m+k}M_{\phi_1}\BGt_m\phi(0)$,
		\begin{flalign*}
			&\BFt_{m+k}M_{\phi_1}\BGt_m\phi(0)\\
			=&\int_0^1r^{m+k-1}(1-r)^{t+1}\BGt_m\phi(r)\intd r\\
			=&\int_0^1r^{k-1}\int_r^1s^{m-1}(1-s)^t\phi(s)\intd s\intd r\\
			=&\int_0^1\int_0^sr^{k-1}\intd rs^{m-1}(1-s)^t\phi(s)\intd s\\
			=&\frac{1}{k}\int_0^1s^{m+k-1}(1-s)^t\phi(s)\intd s\\
			=&\frac{1}{k}\BFt_{m+k}\phi(0).
		\end{flalign*}
		This proves \eqref{eqn: BFt induction 1}. We prove \eqref{eqn: BFt induction 2} by Lemma \ref{lem: Phi asymptotic behavior} using the expansion
		\[
		\frac{1}{1-s}=\sum_{j=0}^\infty s^j.
		\]
		Finally, we arrive at the following equation,
		\[
		\BFt_m1(0)=\int_0^1r^{m-1}(1-r)^t\intd r=B(m,t+1).
		\]
		This completes the proof of Lemma \ref{lem: FG properties}.
	\end{proof}

	\bibliographystyle{plain}
	\bibliography{referencesemi}

\begin{thebibliography}{10}

\bibitem{Al-Pe:trace}
A.~B. Aleksandrov and V.~V. Peller.
\newblock Functions of almost commuting operators and an extension of the
  {H}elton-{H}owe trace formula.
\newblock {\em J. Funct. Anal.}, 271(11):3300--3322, 2016.

\bibitem{Ar-Fi-Ja-Pe:hankel}
Jonathan Arazy, Stephen~D. Fisher, Svante Janson, and Jaak Peetre.
\newblock Membership of {H}ankel operators on the ball in unitary ideals.
\newblock {\em J. London Math. Soc. (2)}, 43(3):485--508, 1991.

\bibitem{Ar-Fi-Pe:Hankel}
Jonathan Arazy, Stephen~D. Fisher, and Jaak Peetre.
\newblock Hankel operators on weighted {B}ergman spaces.
\newblock {\em Amer. J. Math.}, 110(6):989--1053, 1988.

\bibitem{Be:quantization}
F.~A. Berezin.
\newblock Quantization.
\newblock {\em Izv. Akad. Nauk SSSR Ser. Mat.}, 38:1116--1175, 1974.

\bibitem{Bo-Gu:spectral}
L.~Boutet~de Monvel and V.~Guillemin.
\newblock {\em The spectral theory of {T}oeplitz operators}, volume~99 of {\em
  Annals of Mathematics Studies}.
\newblock Princeton University Press, Princeton, NJ; University of Tokyo Press,
  Tokyo, 1981.

\bibitem{Bo-Le-Ta-We:asymptotic}
Louis Boutet~de Monvel, Eric Leichtnam, Xiang Tang, and Alan Weinstein.
\newblock Asymptotic equivariant index of {T}oeplitz operators and relative
  index of {CR} structures.
\newblock In {\em Geometric aspects of analysis and mechanics}, volume 292 of
  {\em Progr. Math.}, pages 57--79. Birkh\"{a}user/Springer, New York, 2011.

\bibitem{Boya91}
Khristo~N. Boyadzhiev.
\newblock A trace formula for two unitary operators with rank one commutator.
\newblock {\em Proc. Amer. Math. Soc.}, 113(1):157--162, 1991.

\bibitem{Ca-Pi:exponential}
Richard~W. Carey and Joel~D. Pincus.
\newblock An exponential formula for determining functions.
\newblock {\em Indiana Univ. Math. J.}, 23:1031--1042, 1973/74.

\bibitem{Ca-Pi:mosaics}
Richard~W. Carey and Joel~D. Pincus.
\newblock Mosaics, principal functions, and mean motion in von {N}eumann
  algebras.
\newblock {\em Acta Math.}, 138(3-4):153--218, 1977.

\bibitem{Charpentier}
Philippe Charpentier.
\newblock Formules explicites pour les solutions minimales de l'\'{e}quation
  {$\bar \partial u=f$} dans la boule et dans le polydisque de {$\Bbb C^n$}.
\newblock {\em Ann. Inst. Fourier (Grenoble)}, 30(4):121--154, 1980.

\bibitem{Ch-Si:trace}
Arup Chattopadhyay and Kalyan~B. Sinha.
\newblock On the {C}arey-{H}elton-{H}owe-{P}incus trace formula.
\newblock {\em J. Funct. Anal.}, 274(8):2265--2290, 2018.

\bibitem{Ch-Hu:traceII}
Muneo Ch\={o} and Tadasi Huruya.
\newblock Trace formulae of {$p$}-hyponormal operators. {II}.
\newblock {\em Hokkaido Math. J.}, 35(2):247--259, 2006.

\bibitem{Co:deformation}
L.~A. Coburn.
\newblock Deformation estimates for the {B}erezin-{T}oeplitz quantization.
\newblock {\em Comm. Math. Phys.}, 149(2):415--424, 1992.

\bibitem{Co:noncommutative}
Alain Connes.
\newblock Noncommutative differential geometry.
\newblock {\em Inst. Hautes \'{E}tudes Sci. Publ. Math.}, (62):257--360, 1985.

\bibitem{Co-Sa-Wi:corona}
\c{S}erban Costea, Eric~T. Sawyer, and Brett~D. Wick.
\newblock The corona theorem for the {D}rury-{A}rveson {H}ardy space and other
  holomorphic {B}esov-{S}obolev spaces on the unit ball in {$\Bbb C^n$}.
\newblock {\em Anal. PDE}, 4(4):499--550, 2011.

\bibitem{En:asymptotics}
Miroslav Engli\v{s}.
\newblock Asymptotics of the {B}erezin transform and quantization on planar
  domains.
\newblock {\em Duke Math. J.}, 79(1):57--76, 1995.

\bibitem{En:berezin}
Miroslav Engli\v{s}.
\newblock Berezin quantization and reproducing kernels on complex domains.
\newblock {\em Trans. Amer. Math. Soc.}, 348(2):411--479, 1996.

\bibitem{En:forelli-rudin}
Miroslav Engli\v{s}.
\newblock A {F}orelli-{R}udin construction and asymptotics of weighted
  {B}ergman kernels.
\newblock {\em J. Funct. Anal.}, 177(2):257--281, 2000.

\bibitem{En:weighted}
Miroslav Engli\v{s}.
\newblock Weighted {B}ergman kernels and quantization.
\newblock {\em Comm. Math. Phys.}, 227(2):211--241, 2002.

\bibitem{Gu-Wa-Zh:trace}
Kunyu Guo, Kai Wang, and Genkai Zhang.
\newblock Trace formulas and {$p$}-essentially normal properties of quotient
  modules on the bidisk.
\newblock {\em J. Operator Theory}, 67(2):511--535, 2012.

\bibitem{HH1}
J.~William Helton and Roger~E. Howe.
\newblock Integral operators: commutators, traces, index and homology.
\newblock In {\em Proceedings of a {C}onference {O}perator {T}heory
  ({D}alhousie {U}niv., {H}alifax, {N}.{S}., 1973)}, pages 141--209. Lecture
  Notes in Math., Vol. 345, 1973.

\bibitem{HH2}
J.~William Helton and Roger~E. Howe.
\newblock Traces of commutators of integral operators.
\newblock {\em Acta Math.}, 135(3-4):271--305, 1975.

\bibitem{Ja-Up-Wa:SchattenNormIdentity}
Svante Janson, Harald Upmeier, and Robert Wallst\'{e}n.
\newblock Schatten-norm identities for {H}ankel operators.
\newblock {\em J. Funct. Anal.}, 119(1):210--216, 1994.

\bibitem{Li-Lu:HankelinSp}
Huiping Li and Daniel~H. Luecking.
\newblock Schatten class of {H}ankel and {T}oeplitz operators on the {B}ergman
  space of strongly pseudoconvex domains.
\newblock In {\em Multivariable operator theory ({S}eattle, {WA}, 1993)},
  volume 185 of {\em Contemp. Math.}, pages 237--257. Amer. Math. Soc.,
  Providence, RI, 1995.

\bibitem{Mi-Pr-Si:trace}
Gadadhar Misra, Paramita Pramanick, and Kalyan~B. Sinha.
\newblock A trace inequality for commuting {$d$}-tuples of operators.
\newblock {\em Integral Equations Operator Theory}, 94(2):Paper No. 16, 37,
  2022.

\bibitem{OF2000}
Joaqu\'{\i}n~M. Ortega and Joan F\`abrega.
\newblock Pointwise multipliers and decomposition theorems in analytic {B}esov
  spaces.
\newblock {\em Math. Z.}, 235(1):53--81, 2000.

\bibitem{Pi:commutators}
Joel~David Pincus.
\newblock Commutators and systems of singular integral equations. {I}.
\newblock {\em Acta Math.}, 121:219--249, 1968.

\bibitem{Rudinbookunitball}
Walter Rudin.
\newblock {\em Function theory in the unit ball of {$\Bbb C^n$}}.
\newblock Classics in Mathematics. Springer-Verlag, Berlin, 2008.
\newblock Reprint of the 1980 edition.

\bibitem{Simon}
Barry Simon.
\newblock {\em Trace ideals and their applications}, volume 120 of {\em
  Mathematical Surveys and Monographs}.
\newblock American Mathematical Society, Providence, RI, second edition, 2005.

\bibitem{Ta-Wa-Zh:HHtrace}
Xiang Tang, Yi~Wang, and Dechao Zheng.
\newblock Helton-{H}owe trace, {C}onnes-{C}hern character and quantization.
\newblock {\em ar{X}iv:2204.04337}.

\bibitem{Voicu14}
Dan-Virgil Voiculescu.
\newblock Almost normal operators mod {H}ilbert-{S}chmidt and the {$K$}-theory
  of the {B}anach algebras {$E\Lambda(\Omega)$}.
\newblock {\em J. Noncommut. Geom.}, 8(4):1123--1145, 2014.

\bibitem{Xia:SchattenNormIdentity}
Jingbo Xia.
\newblock An integral formula for {S}chatten norm on the {H}ardy space: the
  only high-dimensional case.
\newblock {\em J. Funct. Anal.}, 281(7):Paper No. 109113, 13, 2021.

\bibitem{Yang03}
Rongwei Yang.
\newblock A trace formula for isometric pairs.
\newblock {\em Proc. Amer. Math. Soc.}, 131(2):533--541, 2003.

\bibitem{Zheng:Hankel}
Dechao Zheng.
\newblock Schatten class {H}ankel operators on the {B}ergman space.
\newblock {\em Integral Equations and Operator Theory}, 13(3):442--459, 1990.

\bibitem{Zhu:HShankel}
Kehe Zhu.
\newblock Hilbert-{S}chmidt {H}ankel operators on the {B}ergman space.
\newblock {\em Proc. Amer. Math. Soc.}, 109(3):721--730, 1990.

\bibitem{Zhu:SchattenHankel}
Kehe Zhu.
\newblock Schatten class {H}ankel operators on the {B}ergman space of the unit
  ball.
\newblock {\em Amer. J. Math.}, 113(1):147--167, 1991.

\bibitem{Zhu2001trace}
Kehe Zhu.
\newblock A trace formula for multiplication operators on invariant subspaces
  of the {B}ergman space.
\newblock {\em Integral Equations Operator Theory}, 40(2):244--255, 2001.

\bibitem{Zhubookspaces}
Kehe Zhu.
\newblock {\em Spaces of holomorphic functions in the unit ball}, volume 226 of
  {\em Graduate Texts in Mathematics}.
\newblock Springer-Verlag, New York, 2005.

\bibitem{Zhu:bookoperator}
Kehe Zhu.
\newblock {\em Operator theory in function spaces}, volume 138 of {\em
  Mathematical Surveys and Monographs}.
\newblock American Mathematical Society, Providence, RI, second edition, 2007.

\end{thebibliography}

\end{document}